\DeclareMathOperator{\chr}{char}
\DeclareMathOperator{\sign}{sign}
\theoremstyle{definition}
\newtheorem{definition}{Definition}[section]
\newtheorem{example}[definition]{Example}
\newtheorem{fact}[definition]{Fact}
\theoremstyle{corollary}
\newtheorem{corollary}[definition]{Corollary}
\newtheorem{lemma}[definition]{Lemma} 
\newtheorem{proposition}[definition]{Proposition}
\newtheorem{theorem}[definition]{Theorem}
\theoremstyle{definition} 
\newcommand{\N}{\boldsymbol{\mathrm{N}}}
\title{Maximal Circular Point Sets over Arbitrary Fields and an Application to Cryptography}
\date{2024, November}
\author{Chris Busenhart \\ Departement of Mathematics, ETH Zürich}
\begin{document}

\maketitle

\begin{abstract}
\noindent 
The study of rational point sets on circles over the Euclidean plane is discussed in a more general framework, i.e. we generalize the notion “rational" and consider these circular point sets over arbitrary fields. We also determine the cardinality of maximal circular point sets which depends on the radius of the corresponding circle and the characteristic of the underlying field. For the construction of them we use the so called perfect distances which have the necessary compatibility properties to find new points on a circle such that all these points still have rational distance from each other. Then we define the rotation group where its elements are the points on a circle over an arbitrary field and find a connection between a subgroup of it and perfect distances if our field is a prime field. Furthermore, we describe a possible application in cryptography of the rotation group similar to the Diffie-Hellman key exchange.
\end{abstract}

\begin{section}{Introduction and framework}


The motivation to consider point sets in $ \mathbb{R}^n $ such that Euclidean distances between any pair of points from this set are rational came from the study of the so called integral point sets, where all the mutual distances of its points are integral, e.g. see \cite{Solymosi1, SaschaKurz, Kreisel_Kurz_int_hept, Kurz_Wassermann_min_dia}. 
These integral point sets have been investigated for several thousand years \cite{HeikHarIntDist}. One of the most famous results in this field is the Erd\H{o}s-Anning theorem \cite{ErdAnnIntDist, Erdoes2} which tells us that every integral point set of infinite cardinality in the plane is contained in a straight line. Additionally, Anning found $ 12 $ points on a circle such that all mutual distances are integral and the lengths of these distances are solutions of a Diophantine equation \cite{NorAnn}. He also conjectured that it is possible to find such integral circular point sets with cardinality $ 24 $ and $ 48 $. Later Friedelmeyer explained how the $ 12 $ points of Anning \cite{FriedMey} can be constructed and in $ 2020 $, Halbeisen and Hungerbühler \cite{HalbHung} generalized the algorithm of Anning to find $ 3 \cdot 2^n $ such points on a common circle for $ n \in \mathbb{N} $. Another procedure to construct integral circular point sets of arbitrarily finite cardinality is described in \cite{my_masterthesis}. There is also recent work about integral and rational point sets \cite{Solymosi2}.

On the other hand, circular point sets of infinite cardinality clearly exists in the Euclidean plane and it is always possible to scale the circle of every finite rational circular point set such that one obtains an integral circular point set. Therefore the construction of rational circular point sets is strongly related to the construction of integral point sets and the previous ones were already described by Euler \cite{Euler}. The aim of this chapter is to generalize the procedure for the construction of circular point sets in affine planes over arbitrary fields. Instead of working with the Euclidean distance we use the so called quadrance introduced in \cite{Wildberger} which goes back to Wildberger and which can also be used analogously over finite field planes (compare also with \cite{ LeAnhVinh1, LeAnhVinh2, LeAnhVinh3}). Later Kurz called it (squared) distance, see \cite{Kier_Kurz1, Sascha_Kurz2, Kier_Kurz2}.

However, our focus is on maximal point sets on circles over finite field planes where we require all squared distances between any two points of the considered set to be a square in the underlying prime field. We see later why this is a natural extension of rational circular point sets in the Euclidean plane for arbitrary field planes.

For the construction of maximal circular point sets we use the so called perfect distances which we introduce later. For this we have to explain first what we mean with the words “circle", “distance" and “rational" if we work with affine planes over other fields than the real numbers what we will do in the next section after recalling and introducing some basic notion which will be used in the sequel part.
\end{section}

\begin{section}{Basic notions and tools}

\begin{subsection}{Definitions and facts}

We use the following conventions: For an arbitrary field $ \mathbb{F} $, we denote the set of the multiplicative group of $ \mathbb{F} $ by $ \mathbb{F}^{*} = \mathbb{F} \setminus \left\{ 0 \right\} $. Moreover, we denote the smallest integer $ n \in \mathbb{N} $ such that the sum of $ n $ copies of the multiplicative neutral element is equal to the the additive neutral element by $ \chr \left( \mathbb{F} \right) $ and call it characteristic. If such an $ n $ does not exist, then we set $ \chr \left( \mathbb{F} \right) = 0 $. Moreover, a field without a proper subfield is called prime field and we denote the prime field of $ \mathbb{F} $ by $ \boldsymbol{P} \left( \mathbb{F} \right) $. We use the symbol $ \square_{\mathbb{F}} \coloneqq \left\{ a^2 \mid a \in \mathbb{F} \right\}	$ for the set of (multiplicative) squares in $ \mathbb{F} $.
	
In the following we will recall some of the well-know statements about finite fields which we will use later.

	\begin{fact} \label{fact1_finite_fields}
	For any finite field $ \mathbb{F} $ there exists a prime number $ p \in \mathbb{N} $ and an integer $ n \in \mathbb{N} \setminus \left\{ 0 \right\} $ such that $ \vert \mathbb{F} \vert = p^n $ and, conversely, for each prime power $ p^n $ there is a finite field $ \mathbb{F} $ such that $ \vert \mathbb{F} \vert = p^n $. Moreover, the field $ \mathbb{F} $ with cardinality $ p^n $ is unique up to isomorphism. 
	\end{fact}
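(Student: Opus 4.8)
The plan is to establish the three assertions—prime-power cardinality, existence, and uniqueness—separately, with the polynomial $x^{p^n} - x$ serving as the central object for the latter two. First I would show that the cardinality of a finite field $\mathbb{F}$ is a prime power. Since $\mathbb{F}$ is finite, its characteristic cannot be $0$; and were $\chr(\mathbb{F}) = ab$ composite with $a, b > 1$, then $(a \cdot 1)(b \cdot 1) = 0$ would produce zero divisors, contradicting that $\mathbb{F}$ is an integral domain. Hence $\chr(\mathbb{F}) = p$ is prime and the prime field $\pf$ is isomorphic to $\mathbb{Z}/p\mathbb{Z}$. Viewing $\mathbb{F}$ as a vector space over $\pf$, finiteness forces a finite dimension $n \geq 1$, so a choice of basis yields $\vert \mathbb{F} \vert = p^n$.

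For the existence statement, given a prime power $p^n$, I would take a splitting field $\mathbb{K}$ of $f(x) = x^{p^n} - x$ over $\mathbb{Z}/p\mathbb{Z}$ and consider its set of roots $R$. The derivative $f'(x) = p^n x^{p^n - 1} - 1 = -1$ is a nonzero constant, so $f$ is separable and $\vert R \vert = p^n$. Using that the Frobenius map $x \mapsto x^{p^n}$ is a ring homomorphism in characteristic $p$, one checks that $R$ is closed under addition, multiplication, negation, and inversion, hence forms a subfield of $\mathbb{K}$ of cardinality exactly $p^n$.

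For uniqueness, let $\mathbb{F}$ be any field with $\vert \mathbb{F} \vert = p^n$. Its multiplicative group $\mathbb{F}^{*}$ has order $p^n - 1$, so Lagrange gives $a^{p^n - 1} = 1$ for all $a \in \mathbb{F}^{*}$, whence $a^{p^n} = a$ for every $a \in \mathbb{F}$. Thus all $p^n$ elements of $\mathbb{F}$ are roots of $f$, so $\mathbb{F}$ is precisely a splitting field of $f$ over its prime field $\mathbb{Z}/p\mathbb{Z}$; since splitting fields of a fixed polynomial over a fixed base field are unique up to isomorphism, the claim follows.

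I expect the main subtlety to lie in the existence step, where one must simultaneously verify that the root set $R$ is large enough (separability, via the derivative computation) and that it is closed under the field operations so as to constitute a subfield (via the Frobenius homomorphism). The remaining ingredients—existence and uniqueness of splitting fields—I would invoke as standard facts from field theory rather than reprove them.
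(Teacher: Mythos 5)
Your proof is correct, but there is nothing in the paper to compare it against: the paper states this result as a recalled classical fact (the classification of finite fields, due to E.~H.~Moore) and deliberately gives no proof, citing it among ``well-known statements about finite fields.'' Your argument is the standard textbook one and all three parts go through. For the prime-power part, the zero-divisor argument correctly forces $\chr \left( \mathbb{F} \right) = p$ prime, and the vector-space dimension count over $\mathbb{Z}/p\mathbb{Z}$ gives $\vert \mathbb{F} \vert = p^n$. For existence, the derivative computation $f'(x) = -1$ does yield separability, so the root set $R$ of $x^{p^n} - x$ in a splitting field has exactly $p^n$ elements, and closure of $R$ under the field operations follows from the fact that $R$ is the fixed set of the $n$-th iterate of the Frobenius endomorphism (closure under negation is automatic even for $p = 2$, where $-1 = 1$). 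For uniqueness, two small points you pass over quickly are worth making explicit: first, that a field of cardinality $p^n$ has characteristic $p$ (this follows from your first part, since a prime power determines its prime), so its prime field really is $\mathbb{Z}/p\mathbb{Z}$; second, that $\mathbb{F}$ is not merely a field in which $f$ splits but is \emph{generated} over the prime field by the roots of $f$ --- which holds trivially here because $\mathbb{F}$ coincides with the root set, as all $p^n$ elements satisfy $a^{p^n} = a$ by Lagrange and $\deg f = p^n$. With those two remarks spelled out, invoking uniqueness of splitting fields finishes the argument cleanly.
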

	
As a small abuse of notation we write that two fields are equal if and only if they are isomorphic. We denote a finite field of cardinality $ p^n $ by $ \mathbb{F}_{p^n} $.

\begin{fact}	\label{fact2_finite_fields}
	Let $ n,m \in \mathbb{N} \setminus \left\{ 0 \right\} $, then $ \mathbb{F}_{p^n} $ is a subfield of $ \mathbb{F}_{p^m} $ if and only if $ n \mid m $.
\end{fact}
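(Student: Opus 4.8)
The plan is to work inside a fixed algebraic closure $\overline{\mathbb{F}_p}$ of the prime field, so that by Fact~\ref{fact1_finite_fields} both $\mathbb{F}_{p^n}$ and $\mathbb{F}_{p^m}$ can be realized as concrete subsets of $\overline{\mathbb{F}_p}$, and the subfield relation reduces to an inclusion of subsets that respects the field operations. The two implications I would treat separately: the forward direction by a dimension count, the reverse direction by a divisibility argument for the polynomials $x^{p^k}-x$.

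For the forward direction, suppose $\mathbb{F}_{p^n}$ is a subfield of $\mathbb{F}_{p^m}$. Then $\mathbb{F}_{p^m}$ becomes a vector space over $\mathbb{F}_{p^n}$, necessarily of some finite dimension $d \in \mathbb{N}\setminus\{0\}$. Choosing a basis identifies $\mathbb{F}_{p^m}$ with $(\mathbb{F}_{p^n})^d$ as sets, so counting elements yields $p^m = (p^n)^d = p^{nd}$ and hence $m = nd$, which is exactly $n \mid m$.

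For the reverse direction, assume $n \mid m$ and write $m = nd$. First I would record the integer divisibility $p^n - 1 \mid p^m - 1$, which follows from the factorization $p^m - 1 = (p^n)^d - 1 = (p^n - 1)\sum_{i=0}^{d-1}(p^n)^i$. Writing $p^m - 1 = (p^n - 1)e$ and setting $z = x^{p^n - 1}$, the identity $z - 1 \mid z^e - 1$ upgrades this to the polynomial divisibility $x^{p^n-1} - 1 \mid x^{p^m-1} - 1$ in $\mathbb{F}_p[x]$, and multiplying through by $x$ gives $x^{p^n} - x \mid x^{p^m} - x$. It then remains to invoke the characterization that, inside $\overline{\mathbb{F}_p}$, the set of roots of $x^{p^k} - x$ is precisely $\mathbb{F}_{p^k}$: every element $a$ of $\mathbb{F}_{p^k}$ satisfies $a^{p^k} = a$ (the nonzero ones because $\mathbb{F}_{p^k}^{*}$ has order $p^k - 1$, and $a = 0$ trivially), so the $p^k$ distinct elements exhaust the $p^k$ roots of this degree-$p^k$ polynomial. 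The divisibility above then forces every root of $x^{p^n}-x$ to be a root of $x^{p^m}-x$, i.e. $\mathbb{F}_{p^n} \subseteq \mathbb{F}_{p^m}$; since $\mathbb{F}_{p^n}$ is closed under the ambient operations, it is a subfield.

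I expect the main obstacle to be the root-set characterization used in the reverse direction: one must verify that the $p^k$ elements of $\mathbb{F}_{p^k}$ really are $p^k$ distinct roots of the separable polynomial $x^{p^k}-x$, and that the whole argument is carried out inside a single algebraic closure, so that the containment of root sets genuinely produces a subfield rather than merely an abstract embedding. Everything else is elementary counting and divisibility.
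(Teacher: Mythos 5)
Your proof is correct and complete: the forward direction by counting $p^m = (p^n)^d$ via a basis of $\mathbb{F}_{p^m}$ over $\mathbb{F}_{p^n}$, and the converse via the divisibility chain $n \mid m \Rightarrow p^n - 1 \mid p^m - 1 \Rightarrow x^{p^n} - x \mid x^{p^m} - x$ together with the identification of (an embedded copy of) $\mathbb{F}_{p^k}$ with the full root set of $x^{p^k} - x$ inside a fixed algebraic closure $\overline{\mathbb{F}_p}$, is exactly the standard textbook argument, and your counting of $p^k$ distinct roots against the degree makes the root-set characterization airtight without even needing the separability remark. Note that the paper states this as a recalled \emph{Fact} about finite fields and supplies no proof of its own, so there is no in-paper argument to compare against; your proposal simply fills that gap in the conventional way, with the only tacitly assumed ingredient being the standard embedding of any finite field of characteristic $p$ into $\overline{\mathbb{F}_p}$.
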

	
In particular, for each finite prime field $ \mathbb{F}_{p^n} $ we have $ \boldsymbol{P} \left( \mathbb{F} \right) = \mathbb{F}_p $. Moreover, if $ \mathbb{F} $ is a field with $ \chr \left( \mathbb{F} \right) = 0 $, then $ \boldsymbol{P} \left( \mathbb{F} \right) = \mathbb{Q} $, where $ \mathbb{Q} $ denotes the field of the rational numbers \cite[p.\,7-8]{McCarthy}.
	
We denote an arbitrary square root of $ -1 $ by $ \sqrt{-1} $ and we write $ \sqrt{-1} \in \mathbb{F} $ to mention that $ \mathbb{F} $ contains a square root of $ -1 $. Whereas $ \sqrt{-1} \notin \mathbb{F} $ means the opposite.
	
\begin{fact} \label{fact3_finite_fields}
	Let $ \mathbb{F} $ be a finite field. Then 
	$$ \sqrt{-1} \notin \mathbb{F} \Longleftrightarrow\vert \mathbb{F} \vert \equiv 3 \pmod{4} .$$
\end{fact}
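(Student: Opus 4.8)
The plan is to split the argument according to the characteristic of $\mathbb{F}$, writing $q \coloneqq \vert \mathbb{F} \vert = p^n$ with $p$ prime by Fact~\ref{fact1_finite_fields}, and to translate the condition $\sqrt{-1} \in \mathbb{F}$ into the statement $-1 \in \square_{\mathbb{F}}$, i.e. that $-1$ is a square in $\mathbb{F}$.

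First I would dispose of the case $\chr(\mathbb{F}) = 2$. Here the Frobenius map $x \mapsto x^2$ is an injective, hence (by finiteness) bijective, field endomorphism, so every element of $\mathbb{F}$ is a square; in particular $-1 = 1 \in \square_{\mathbb{F}}$, that is, $\sqrt{-1} \in \mathbb{F}$. On the other hand $q = 2^n$ is even, so $q \not\equiv 3 \pmod{4}$. Thus both sides of the claimed equivalence are false in this case, and the statement holds trivially.

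The main case is $p$ odd, which is where the substance of the argument lies. I would invoke the standard fact that $\mathbb{F}^{*}$ is a cyclic group of even order $q - 1$. The squaring homomorphism $\mathbb{F}^{*} \to \mathbb{F}^{*}$ has kernel $\{\pm 1\}$, which has exactly two elements since $1 \neq -1$ for $p$ odd; hence its image, the set of nonzero squares $\square_{\mathbb{F}} \cap \mathbb{F}^{*}$, is the unique subgroup of index $2$, namely $\{ x \in \mathbb{F}^{*} \mid x^{(q-1)/2} = 1 \}$ (Euler's criterion). Since $-1$ is precisely the unique element of order $2$ in the cyclic group $\mathbb{F}^{*}$, it is a square if and only if $(-1)^{(q-1)/2} = 1$, i.e. if and only if $(q-1)/2$ is even, which is equivalent to $q \equiv 1 \pmod{4}$. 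As $q$ is odd, the only alternative is $q \equiv 3 \pmod{4}$, in which case $-1$ is a non-square and $\sqrt{-1} \notin \mathbb{F}$; this gives the desired equivalence.

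The step I expect to require the most care is the identification of the nonzero squares with the solution set of $x^{(q-1)/2} = 1$ together with the observation that $-1$ is the element of order $2$; everything else is bookkeeping. One point worth stating explicitly is that $q = p^n$ odd forces $q \equiv 1$ or $q \equiv 3 \pmod{4}$, so the two subcases in the odd characteristic are exhaustive and the contrapositive phrasing in the statement is justified.
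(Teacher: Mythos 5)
Your proof is correct, but it takes a genuinely different route from the paper. Note first that the paper states this as a Fact and recalls it without proof in Section 2; its own derivation appears only later, as the (unlabelled) corollary following \Cref{theo_general_c-maximal}, and only for $\chr\left(\mathbb{F}\right) \neq 2$. There the argument is a counting one: the function $q_1(t) = \left( \tfrac{4t}{t^2+1} \right)^2$, defined on admissible $t \in \mathbb{F}^{*}$ (those with $t^2 + 1 \neq 0$), takes equal values exactly on the four elements $\pm t, \pm t^{-1}$, a class that collapses to the two-element class $\left\{ \pm 1 \right\}$ when $t = \pm 1$; comparing $\vert \mathbb{F}^{*} \vert = \vert \mathbb{F} \vert - 1$ modulo $4$ after removing the exceptional class (and, when $\sqrt{-1} \in \mathbb{F}$, the two inadmissible elements) yields $\vert \mathbb{F} \vert \equiv 1 \pmod{4}$ in the presence of $\sqrt{-1}$ and $\vert \mathbb{F} \vert \equiv 3 \pmod{4}$ otherwise. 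You instead give the classical argument: cyclicity of $\mathbb{F}^{*}$, the index-$2$ image of the squaring homomorphism, and Euler's criterion applied to $-1$. What each buys: your route is the standard, self-contained textbook proof and additionally disposes of the characteristic-$2$ case (where both sides of the equivalence are false, since $-1 = 1$ is a square and $\vert \mathbb{F} \vert$ is even), which the paper's corollary explicitly excludes; the paper's route avoids invoking the nontrivial theorem that $\mathbb{F}^{*}$ is cyclic, obtaining the congruence as a free byproduct of the orbit-counting machinery already developed for perfect distances. Both arguments are sound; the step you flagged as delicate, identifying the nonzero squares with the solution set of $x^{(q-1)/2} = 1$, is indeed the crux of your version, and it closes correctly because the image of squaring has order $\tfrac{q-1}{2}$, every square is a root of $X^{(q-1)/2} - 1$, and that polynomial has at most $\tfrac{q-1}{2}$ roots over a field.
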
	
	
Now we would like to generalize maps like translations and rotations as we know it from the Euclidean plane for arbitrary field planes. Points in the field planes are denoted by row and column vectors of two entries depending on what is more convenient to work with.


\begin{definition}
	Let $ \mathbb{F} $ be a field and $ P = \left(p_1, p_2 \right), Q = \left(q_1, q_2 \right) \in \mathbb{F}^2 $ be an arbitrary point. Then we can define the {\em translation $ \tau_{Q}: \mathbb{F}^2 \to \mathbb{F}^2 $ in direction $ Q $} by
	$$ \tau_{Q} \left( P \right) =
		 \begin{pmatrix}
			p_1 + q_1 \\
			p_2 + q_2
		\end{pmatrix}  .$$
Let $ a,b \in \mathbb{F} $ with $ a^2 + b^2 = 1 $, then the {\em rotation around the origin with parameters $ a,b $} $ \theta^{a,b}: \mathbb{F}^2 \to \mathbb{F}^2 $ is defined by 
	$$ \theta^{a,b} \left( P \right) = 
		\begin{pmatrix}
			a & b \\
			-b & a
		\end{pmatrix} 
		 \begin{pmatrix}
			p_1 \\
			p_2
		\end{pmatrix} $$
where the operation in between is matrix multiplication. Moreover, a {\em rotation around} $ M \in \mathbb{F}^2 $ {\em with parameters $ a,b $} is a composition of translations and rotations around the origin in the following way: $ \theta_{M}^{a,b} : \mathbb{F}^2 \to \mathbb{F}^2 $ is defined by $ \theta_{M}^{a,b} = \tau_{M} \circ \theta^{a,b} \circ \tau_{M}^{-1} $. We call
$$ T^\mathbb{F} \coloneqq \Set{ \tau_{P} | P \in \mathbb{F}^2 } $$	
{\em set of translations} and for $ M \in \mathbb{F}^2 $ we call
	$$ \Theta_{M}^{\mathbb{F}} \coloneqq \Set{\theta_M^{a,b} | a^2 + b^2 = 1} $$
{\em set of rotations around any point} $ M \in \mathbb{F}^2 $.
\end{definition}

Observe that the above sets equipped with the composition defines each an abelian group that acts on the points of an arbitrary field plane.

\begin{definition}
	Let $ \mathbb{F} $ be an arbitrary field, $ M = (m_1,m_2) \in \mathbb{F}^2 $ and $ r \in {\mathbb{F}}^{*} $. Then we call 
	\begin{displaymath}
		C \left(M,r \right)_{\mathbb{F}} \coloneqq \Set{ \left( x,y \right) \in \mathbb{F}^2 | \left( x - m_1 \right)^2 + \left( y - m_2 \right)^2 = r^2 }
	\end{displaymath}
the {\em circle given by its  center $ M $ and radius $ r $}.
\end{definition}

Note that we do not want to allow $ r = 0 $. In case $ \sqrt{-1} \notin \mathbb{F} $ a circle with radius $ r = 0 $ would only contain its center. This might happen if $ \mathbb{F} $ is a finite field and $ \vert \mathbb{F} \vert \equiv 3 \pmod{4} $. Whereas a finite field $ \mathbb{F} $ with $ \vert \mathbb{F} \vert \equiv 1 \pmod{4} $ has $ \sqrt{-1} \in \mathbb{F} $, i.e. there exist $ a,b \in \mathbb{F}^{*} $ such that $ a^2 + b^2 = 0 $. Then a circle with radius $ r = 0 $ can consist of two lines, see \Cref{ex_f5_zero_radius}.



\begin{example} \label{CF_7}
	\begin{displaymath}
		C \left((0,0),1 \right)_{\mathbb{F}_7} = \left\{ \left( 0,1 \right), \left( 0,6 \right),\left( 1,0 \right),\left( 2,2 \right),\left( 2,5 \right),\left( 5,2 \right),\left( 5,5 \right),\left( 6,0 \right) \right\}
	\end{displaymath}
is a circle over the affine plane $ \mathbb{F}_7 \times \mathbb{F}_7 $ consisting of exactly eight different points.
\end{example}

The points in \Cref{CF_7} can be determined by plugging in all elements of $ \mathbb{F}_7 \times \mathbb{F}_7 $ or with the help of a parametrization which we will see later. Next we generalize the notion of the Euclidean distance for points on planes over arbitrary fields.

\begin{definition}
	Let $ P = \left( p_1, p_2 \right) $ and $ Q = \left( q_1, q_2 \right) $ be two points on an affine plane over the arbitrary field $ \mathbb{F} $. Then we call
	\begin{displaymath}
		D^2 \left( P,Q \right) \coloneqq \left( p_1 - q_1 \right)^2 + \left( p_2 - q_2 \right)^2
	\end{displaymath}
{\em squared distance between $ P $ and $ Q $}. 
\end{definition}

Even though square roots can be defined over all fields \cite{square_root} we avoid working with them for the sake of simplicity.
Observe that $ D $ is generally far away from being a metric although symmetry is satisfied. In fact, we do not have a partial order on arbitrary fields like “$ \leq $” or “$ \geq $” on $ \mathbb{R} $ and the squared distance of two points in an affine plane could vanish even if these points are not identical. However, we will see that the squared distances between different points on a circle over an affine plane either all vanish or none of them do, see \Cref{dist_lemma}. 

From observations in the real plane we know that translations and rotations around any point applied to two point in the Euclidean plane does not change the Euclidean distance. We will show now that the squared distance remains invariant when we apply elements from $ T^\mathbb{F} $ and $ \Theta_{M}^{\mathbb{F}} $, too.

\begin{lemma} \label{squared_distance_invariant}
	Let $ \mathbb{F} $ be an arbitrary field and $ P,Q \in \mathbb{F}^2 $. Then the squared distance of $ P,Q $ remains invariant under translations and rotations.
\end{lemma}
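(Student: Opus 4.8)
The plan is to verify invariance directly from the definition of $D^2$, treating the three kinds of maps in turn and using only the field axioms, so that no order structure on $\mathbb{F}$ is needed. Throughout write $P = (p_1,p_2)$, $Q = (q_1,q_2)$ and set $u = p_1 - q_1$, $v = p_2 - q_2$, so that $D^2(P,Q) = u^2 + v^2$.

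First I would dispose of translations. For any $S = (s_1,s_2) \in \mathbb{F}^2$, the coordinates of $\tau_S(P)$ and $\tau_S(Q)$ differ from those of $P$ and $Q$ by the same shift, so the differences of corresponding coordinates are unchanged: $(p_1 + s_1) - (q_1 + s_1) = u$ and likewise for the second coordinate. Hence $D^2(\tau_S(P), \tau_S(Q)) = u^2 + v^2 = D^2(P,Q)$, and every element of $T^\mathbb{F}$ preserves the squared distance.

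The core of the argument is the rotation around the origin. Fix $a,b \in \mathbb{F}$ with $a^2 + b^2 = 1$. Applying $\theta^{a,b}$ to $P$ and $Q$ and subtracting, the difference vector becomes $(au + bv,\, -bu + av)$. I would then compute
$$D^2(\theta^{a,b}(P), \theta^{a,b}(Q)) = (au + bv)^2 + (-bu + av)^2 = (a^2 + b^2)(u^2 + v^2),$$
where the cross terms $\pm 2abuv$ cancel. The defining relation $a^2 + b^2 = 1$ then collapses this to $u^2 + v^2 = D^2(P,Q)$. This is the step that actually uses the hypothesis on $a,b$, and it is the only point where the specific form of the rotation matrix matters; everything else is formal.

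Finally, a rotation around an arbitrary center $M$ is by definition $\theta_M^{a,b} = \tau_M \circ \theta^{a,b} \circ \tau_M^{-1}$, a composition of maps each already shown to preserve $D^2$. Since invariance is stable under composition, $\theta_M^{a,b}$ preserves the squared distance as well, which covers all cases. I do not expect any genuine obstacle here: the computation is purely algebraic, and the cancellation of the cross terms together with the identity $a^2 + b^2 = 1$ is precisely what makes it work uniformly over every field, independently of the characteristic.
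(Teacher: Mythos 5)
Your proposal is correct and follows essentially the same route as the paper: a direct check for translations, the algebraic computation $(au+bv)^2 + (-bu+av)^2 = (a^2+b^2)(u^2+v^2)$ for rotations around the origin, and the decomposition $\theta_M^{a,b} = \tau_M \circ \theta^{a,b} \circ \tau_M^{-1}$ for the general case. The only cosmetic difference is that you work with the difference vector $(u,v)$ from the start, which slightly streamlines the same computation.
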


\begin{proof}
	Let $ P = \left( p_1,p_2 \right), Q= \left( q_1,q_2 \right), R = \left( r_1,r_2 \right) \in \mathbb{F}^2 $ and $ a,b \in \mathbb{F} $ such that $ a^2+b^2 = 1 $ be arbitrary, then we have
	\begin{align*}
		D^2 \left( \tau_R \left( P \right),\tau_R \left( Q \right) \right) &= D^2 \left( 
		\begin{pmatrix} 
		p_1+r_1 \\
		p_2+r_2 \end{pmatrix},
		\begin{pmatrix} 
		q_1+r_1 \\
		q_2+r_2 \end{pmatrix} 
		\right) \\
		&= \left( \left( p_1+r_1 \right)-\left( q_1+r_1 \right) \right)^2 + \left( \left( p_2+r_2 \right)-\left( q_2+r_2\right) \right)^2 \\
		&= D^2 \left( P,Q \right).
	\end{align*}
Now we show that a rotation around the origin does not change the value of the squared distance, i.e.
	\begin{align*}
		D^2 \left( \theta^{a,b} \left( P \right),\theta^{a,b} \left( Q \right) \right)
		&= D^2 \left( \begin{pmatrix}
			a & b \\
			-b & a
		\end{pmatrix} 
		\left( \begin{matrix}
		p_1 \\
		p_2
\end{matrix}	\right)	 , 
 \begin{pmatrix}
			a & b \\
			-b & a
		\end{pmatrix} 
		\left( \begin{matrix}
		q_1 \\
		q_2
\end{matrix}	\right) \right)  \\
		&= D^2 \left( 
		\begin{pmatrix} 
		ap_1+bp_2 \\
		-bp_1+ap_2 
		\end{pmatrix}, 
		\begin{pmatrix} 
		aq_1+bq_2 \\
		-bq_1+aq_2 
		\end{pmatrix} \right)\\[-14pt]
		\intertext{ \center{$ = \left( a \left( p_1-q_1\right) + b \left(p_2-q_2 \right)  \right)^2 + \left( -b \left( p_1-q_1 \right) + a \left( p_2-q_2 \right) \right)^2 $}}\\[-20pt]
		&= \left( a^2 + b^2 \right) \left( \left( p_1-q_1\right)^2 + \left( p_2-q_2\right)^2 \right) \\
		&= D^2 \left( P,Q \right).
	\end{align*}		
Observe that $ \theta^{a,b} =  \theta_{\left( 0,0 \right)}^{a,b} $. Since $ \theta_{M}^{a,b} = \tau_{M} \circ \theta^{a,b} \circ \tau_{M}^{-1} $ is a decomposition of translations and a rotation around the origin, i.e. a decomposition of elements which lets the squared distance invariant, we conclude that $ \theta_{M}^{a,b} $ does so, too.
\end{proof}

Now we would like to generalize the notion “rational". After that we are able to decide whether a distance between two points on a circle is rational or not such that it remains compatible with our understanding of a rational distance in the case $ \mathbb{F} = \mathbb{R} $. We know that $ \mathbb{Q} $ is the prime field of $ \mathbb{R} $, so the squared distance between two points with rational Euclidean distance is just a square in the underlying prime field of the real numbers. This motivates the following definition:

\begin{definition}
	Let $ \mathbb{F} $ be an arbitrary field and 
let $ P $ and $ Q $ be points of the affine plane $ \mathbb{F} \times \mathbb{F} $. If 
	$$ D^2 \left( P,Q \right) \in \square_{\boldsymbol{P} \left( \mathbb{F} \right)} ,$$ i.e. $ D^2 \left( P,Q \right) $ has a root in $ \boldsymbol{P} \left( \mathbb{F} \right) $, then we say that the squared distance of $ P $ and $ Q $ is {\em rational} or in short form: $ P $ and $ Q $ have {\em r.s.d.}.
\end{definition}

\begin{example} \label{ex_F7}
	We would like to find the rational squared distances of the points in $ C \left((0,0),1 \right)_{\mathbb{F}_7} $. Observe that 
	\begin{displaymath}
		\square_{\mathbb{F}_{7}} \coloneqq \left\{ 0,1,2,4 \right\}.
	\end{displaymath}	 
Hence, we can calculate all the $ 28 $ distances between the points in $ C \left((0,0),1 \right)_{\mathbb{F}_7} $ and check whether they are rational or not. In \Cref{F_7_with_distances} you see the rational distances coloured blue and the non-rational distances marked in red. 
\end{example}

\begin{figure}[h]  
\begin{center}
\pagestyle{empty} 
\definecolor{ffqqqq}{rgb}{1.,0.,0.}
\definecolor{qqqqff}{rgb}{0.,0.,1.}
\definecolor{sqsqsq}{rgb}{0.12549019607843137,0.12549019607843137,0.12549019607843137}
\begin{tikzpicture}[line cap=round,line join=round,>=triangle 45,x=0.375cm,y=0.375cm]
\clip(-13.,-12.) rectangle (13.,12.);
\draw [line width=2.pt,color=qqqqff] (3.8268343236508984,-9.238795325112866)-- (9.238795325112868,-3.8268343236508953);
\draw [line width=2.pt,color=qqqqff] (-3.8268343236508966,-9.238795325112868)-- (-9.238795325112866,-3.8268343236508966);
\draw [line width=2.pt,color=qqqqff] (-9.238795325112866,3.8268343236508997)-- (-3.8268343236508966,9.238795325112868);
\draw [line width=2.pt,color=qqqqff] (3.8268343236508944,9.238795325112868)-- (9.238795325112866,3.8268343236509006);
\draw [line width=2.pt,color=qqqqff] (-9.238795325112866,3.8268343236508997)-- (9.238795325112866,3.8268343236509006);
\draw [line width=2.pt,color=qqqqff] (9.238795325112866,3.8268343236509006)-- (-3.8268343236508966,9.238795325112868);
\draw [line width=2.pt,color=qqqqff] (-9.238795325112866,3.8268343236508997)-- (3.8268343236508944,9.238795325112868);
\draw [line width=2.pt,color=qqqqff] (3.8268343236508944,9.238795325112868)-- (-3.8268343236508966,9.238795325112868);
\draw [line width=2.pt,color=qqqqff] (-9.238795325112866,-3.8268343236508966)-- (9.238795325112868,-3.8268343236508953);
\draw [line width=2.pt,color=qqqqff] (3.8268343236508984,-9.238795325112866)-- (-9.238795325112866,-3.8268343236508966);
\draw [line width=2.pt,color=qqqqff] (-3.8268343236508966,-9.238795325112868)-- (9.238795325112868,-3.8268343236508953);
\draw [line width=2.pt,color=qqqqff] (3.8268343236508984,-9.238795325112866)-- (-3.8268343236508966,-9.238795325112868);
\draw [line width=2.pt,color=ffqqqq] (9.238795325112868,-3.8268343236508953)-- (9.238795325112866,3.8268343236509006);
\draw [line width=2.pt,color=ffqqqq] (9.238795325112868,-3.8268343236508953)-- (3.8268343236508944,9.238795325112868);
\draw [line width=2.pt,color=ffqqqq] (9.238795325112868,-3.8268343236508953)-- (-3.8268343236508966,9.238795325112868);
\draw [line width=2.pt,color=ffqqqq] (9.238795325112868,-3.8268343236508953)-- (-9.238795325112866,3.8268343236508997);
\draw [line width=2.pt,color=ffqqqq] (3.8268343236508984,-9.238795325112866)-- (9.238795325112866,3.8268343236509006);
\draw [line width=2.pt,color=ffqqqq] (3.8268343236508984,-9.238795325112866)-- (3.8268343236508944,9.238795325112868);
\draw [line width=2.pt,color=ffqqqq] (3.8268343236508984,-9.238795325112866)-- (-3.8268343236508966,9.238795325112868);
\draw [line width=2.pt,color=ffqqqq] (3.8268343236508984,-9.238795325112866)-- (-9.238795325112866,3.8268343236508997);
\draw [line width=2.pt,color=ffqqqq] (-3.8268343236508966,-9.238795325112868)-- (9.238795325112866,3.8268343236509006);
\draw [line width=2.pt,color=ffqqqq] (-3.8268343236508966,-9.238795325112868)-- (3.8268343236508944,9.238795325112868);
\draw [line width=2.pt,color=ffqqqq] (-3.8268343236508966,-9.238795325112868)-- (-3.8268343236508966,9.238795325112868);
\draw [line width=2.pt,color=ffqqqq] (-3.8268343236508966,-9.238795325112868)-- (-9.238795325112866,3.8268343236508997);
\draw [line width=2.pt,color=ffqqqq] (-9.238795325112866,-3.8268343236508966)-- (9.238795325112866,3.8268343236509006);
\draw [line width=2.pt,color=ffqqqq] (-9.238795325112866,-3.8268343236508966)-- (3.8268343236508944,9.238795325112868);
\draw [line width=2.pt,color=ffqqqq] (-9.238795325112866,-3.8268343236508966)-- (-3.8268343236508966,9.238795325112868);
\draw [line width=2.pt,color=ffqqqq] (-9.238795325112866,-3.8268343236508966)-- (-9.238795325112866,3.8268343236508997);
\begin{scriptsize}
\draw [fill=sqsqsq] (-3.8268343236508966,9.238795325112868) circle (2.5pt);
\draw[color=sqsqsq] (-4.6,10.2) node {\fontsize{10}{0} $ \left( 5,2 \right) $};
\draw [fill=sqsqsq] (9.238795325112866,3.8268343236509006) circle (2.5pt);
\draw[color=sqsqsq] (10.55,4.3) node {\fontsize{10}{0} $ \left( 2,2 \right) $};
\draw [fill=sqsqsq] (-9.238795325112866,-3.8268343236508966) circle (2.5pt);
\draw[color=sqsqsq] (-10.8,-4.3) node {\fontsize{10}{0} $ \left( 6,0 \right) $};
\draw [fill=sqsqsq] (3.8268343236508984,-9.238795325112866) circle (2.5pt);
\draw[color=sqsqsq] (4.4,-10.35) node {\fontsize{10}{0} $ \left( 0,6 \right) $};
\draw [fill=sqsqsq] (9.238795325112868,-3.8268343236508953) circle (2.5pt);
\draw[color=sqsqsq] (10.55,-4.3) node {\fontsize{10}{0} $ \left( 1,0 \right) $};
\draw [fill=sqsqsq] (3.8268343236508944,9.238795325112868) circle (2.5pt);
\draw[color=sqsqsq] (4.4,10.2) node {\fontsize{10}{0} $ \left( 2,5 \right) $};
\draw [fill=sqsqsq] (-9.238795325112866,3.8268343236508997) circle (2.5pt);
\draw[color=sqsqsq] (-10.8,4.3) node {\fontsize{10}{0} $ \left( 5,5 \right) $};
\draw [fill=sqsqsq] (-3.8268343236508966,-9.238795325112868) circle (2.5pt);
\draw[color=sqsqsq] (-4.6,-10.35) node {\fontsize{10}{0} $ \left( 0,1 \right) $};
\draw[color=qqqqff] (7.,-7.) node {\fontsize{10}{0} $ 2 $};
\draw[color=qqqqff] (-7.15,-7.) node {\fontsize{10}{0} $ 2 $};
\draw[color=qqqqff] (-7.2,7.) node {\fontsize{10}{0} $ 2 $};
\draw[color=qqqqff] (6.9,7.) node {\fontsize{10}{0} $ 2 $};
\draw[color=qqqqff] (-0.1,3.1) node {\fontsize{10}{0} $ 4 $};
\draw[color=qqqqff] (1.7,6.1) node {\fontsize{10}{0} $ 2 $};
\draw[color=qqqqff] (-2.,6.1) node {\fontsize{10}{0} $ 2 $};

\draw[color=qqqqff] (-0.1,10.1) node {\fontsize{10}{0} $ 4 $};
\draw[color=qqqqff] (-0.1,-3.15) node {\fontsize{10}{0} $ 4 $};
\draw[color=qqqqff] (-2,-6.2) node {\fontsize{10}{0} $ 2 $};
\draw[color=qqqqff] (1.7,-6.2) node {\fontsize{10}{0} $ 2 $};
\draw[color=qqqqff] (-0.1,-10.) node {\fontsize{10}{0} $ 4 $};
\end{scriptsize}
\end{tikzpicture}

\caption{The points of $ C \left((0,0),1 \right)_{\mathbb{F}_7} $ with rational squared distances (blue) and non-rational squared distances (red)} 
\label{F_7_with_distances}
\end{center}
\end{figure}
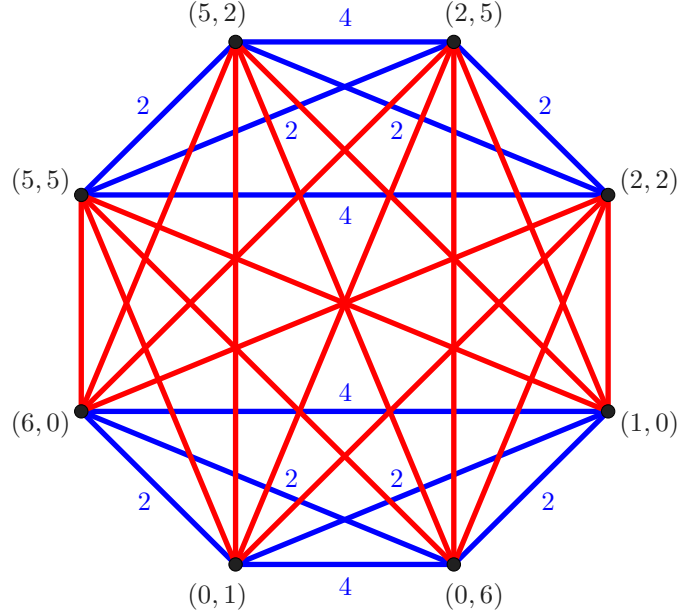

Recall that so far a rational point set in the real plane is a set of points such that their mutual Euclidean distances are all rational (observe that we do not require the coordinates of the points to be rational). We will now generalize this notion for arbitrary field planes by the help of squared distances.

\begin{definition}
	Let $ \mathbb{F} $ be any field. Then we call a set $ S \subset \mathbb{F}^2 $ {\em rational point set} if and only if for all $ P,Q \in S $ we have that $ P $ and $ Q $ have r.s.d..
\end{definition}	

In \Cref{F_7_with_distances} we see that the points of $ C \left((0,0),1 \right)_{\mathbb{F}_7} $ can be divided into two groups of points such that all points of one group have rational squared distances from each other. 
In the next section we would like to examine these special subsets on circles over prime field planes. Then we will consider circles on planes over arbitrary fields. But before we start with this, let us introduce the following definition.

\begin{definition}
	Let $ \mathbb{F} $ be an arbitrary field, $ M = (m_1,m_2) \in \mathbb{F}^2 $ and $ r \in {\mathbb{F}}^{*} $. Then we call a rational point set $ S \subset \mathbb{F}^2$ {\em circular point set} if $ S \subseteq C \left(M,r \right)_{\mathbb{F}} $. 
Moreover, we call $ S $ {\em e-maximal with respect to $ C \left(M,r \right)_{\mathbb{F}} $} if we cannot extend $ S $ with any further point of $ C \left(M,r \right)_{\mathbb{F}} $ such that the resulting set remains a circular point set. Additionally, we call an e-maximal subsets of $ C \left(M,r \right)_{\mathbb{F}} $ {\em c-maximal} if they have maximal cardinality among all e-maximal subsets of $ C \left(M,r \right)_{\mathbb{F}} $.
\end{definition}

Observe, that the letter “e" of e-maximal stands for “extension", i.e. it is not possible to extend this rational circular point set such that the extension remains rational and similarly, “c" stands for “circle" in the sense of that we find no larger rational point set on a given circle. Since squared distances are invariant with respect to translation, we will often choose $ M = \left( 0,0 \right) $ as center of a circular point set for simplicity. We will see later an example which shows the existence of e-maximal circular point sets which are not c-maximal, see \Cref{ex_F7_extended}.
\end{subsection}

\begin{subsection}{Parametrization of points on circles and consequences}


Let $ \mathbb{F} $ be any field and $ r \in \mathbb{F}^{*} $. We would like to find all points on $ C \left((0,0),r \right)_{\mathbb{F}} $ which can be described by the algebraic equation
	\begin{align*}
		x^2 + y^2 = r^2.
	\end{align*}
At first we assume that $ \chr \left( \mathbb{F} \right) \neq 2 $. Clearly $ \left( 0,-r \right) \in C \left((0,0),r \right)_{\mathbb{F}} $. Now for $ \left( 0,-r \right) $ and any other point of $  C \left((0,0),r \right)_{\mathbb{F}} $, there is a unique line in the affine plane $ \mathbb{F} \times \mathbb{F} $ containing the two points. Hence, we can describe all points on $ C \left((0,0),r \right)_{\mathbb{F}} $ as intersection points of lines through $ \left( 0,-r \right) $ with the circle given by the above equation. Observe that every secant containing $ \left( 0, -r \right) $ is either of the form
	\begin{align*}
		y = tx - r \ \ \mathrm{or} \ \ x = 0
	\end{align*}
for $ t \in \mathbb{F} $. 
Obviously, the equation $ x = 0 $ gives us the point $ \left( 0,r \right) $. In all other cases, we can insert the linear equation in the equation of the circle and we get
	\begin{align*}
		\left( t^2 + 1 \right)x^2 - 2trx + r^2 = r^2.		
	\end{align*} 
If we assume $ t^2 \neq -1 $ and $ x \neq 0 $, the last equation gives us the following solution for $ x $:
	\begin{align*}
		x = \dfrac{2tr}{t^2 + 1}
	\end{align*} 
Plugging $ x $ in the equation of the secant, we finally get
	\begin{align*}
		y = 	\dfrac{r \left( t^2 - 1 \right)}{t^2 + 1}.
	\end{align*}
Then for every admissible $ t $ we have another solution which is different from $ \left( 0,\pm r \right) $. 

On the other hand, in case $ \chr \left( \mathbb{F} \right) = 2 $, it is easier to find the points because then every element of $ \mathbb{F} $ has exactly one root. Hence, the equation of the circle can be solved for $ y $:
	\begin{displaymath}
		y = x + r
	\end{displaymath}	
To get a parametrization of a general circle with center $ (m_1, m_2) $, we can just shift all the points by these coordinates. Finally, we conclude the next statement.



\begin{corollary} \label{coro_parametrisation}
	Let $ \mathbb{F} $ be an arbitrary field, $ r \in {\mathbb{F}}^{*} $ and $ \left( m_1,m_2 \right) \in \mathbb{F} \times \mathbb{F} $. 
If $ \chr \left( \mathbb{F} \right) = 2 $, then we have
	\begin{align*}
		C \left((m_1,m_2),r \right)_{\mathbb{F}} = \Set{ \left( m_1 + t, m_2 + t + r \right) | t \in \mathbb{F}}.
	\end{align*}	
If $ \chr \left( \mathbb{F} \right) \neq 2 $, we have
	
	\begin{gather*}
		C \left((m_1,m_2),r \right)_{\mathbb{F}} = \Set{ \left( m_1 + \frac{2tr}{t^2 + 1}, m_2 + \frac{r \left( t^2 - 1 \right)}{t^2 + 1} \right) | t \in \mathbb{F}, \ t^2 \neq -1} \\ \cup \Set{ \left( m_1 , m_2 + r \right) }.
	\end{gather*}	

Furthermore, the cardinality of the set above is

\begin{align*}
	\rvert C \left((m_1,m_2),r \right)_{\mathbb{F}} \rvert = \left\{
\begin{array}{lll}
\lvert \mathbb{F} \rvert &  \textrm{if} \ \chr \left( \mathbb{F} \right) = 2 \\ [1.12ex]
\lvert \mathbb{F} \rvert -1 & \textrm{if} \ \chr \left( \mathbb{F} \right) \neq 2 \ \textrm{and} \ \sqrt{-1} \in \mathbb{F} \\ [1.12ex]
\lvert \mathbb{F} \rvert + 1 &  \textrm{otherwise} \\
\end{array}
\right. 
\end{align*}	
	
where it could be that $ \lvert \mathbb{F} \rvert = \infty $. In this case we set $ \lvert \mathbb{F} \rvert \pm 1 = \infty $.	
	
\end{corollary}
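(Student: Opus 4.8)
The plan is to take the two set-theoretic identities essentially for granted from the derivation preceding the statement---stereographic projection from $(0,-r)$ when $\chr(\mathbb{F})\neq 2$, and direct solving of $x^2+y^2=r^2$ when $\chr(\mathbb{F})=2$---and to devote the proof to the two points that this derivation leaves open: that each parametrization actually \emph{exhausts} the circle (nothing is missed), and the cardinality count. Since a translation $\tau_{(m_1,m_2)}$ is a bijection of $\mathbb{F}^2$ carrying $C\left((0,0),r\right)_{\mathbb{F}}$ onto $C\left((m_1,m_2),r\right)_{\mathbb{F}}$, it suffices to treat $M=(0,0)$ and then shift by $(m_1,m_2)$; I would record this reduction once at the start, so that both the set equalities and the cardinalities follow for general centers.

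First I would dispatch the case $\chr(\mathbb{F})=2$. Here squaring is additive, so $x^2+y^2=(x+y)^2$ and the circle equation becomes $(x+y)^2=r^2$. As every element of a field of characteristic $2$ has a unique square root, this forces $x+y=r$, i.e. $y=x+r$; setting $t=x$ gives exactly the claimed set. The map $t\mapsto(t,t+r)$ is injective because the first coordinate recovers $t$, so the circle is in bijection with $\mathbb{F}$ and has cardinality $\lvert\mathbb{F}\rvert$.

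For $\chr(\mathbb{F})\neq 2$ I would argue in three steps. (i) Injectivity of $t \mapsto \left( \tfrac{2tr}{t^2+1}, \tfrac{r(t^2-1)}{t^2+1} \right)$ on $\{t : t^2 \neq -1\}$: equating two images and cancelling the nonzero $r$, the second coordinates give $t_1^2=t_2^2$ after clearing denominators (using $2\neq 0$), and then the first coordinates give $t_1=t_2$. Geometrically this is because a point $(x,y)\neq(0,-r)$ determines the slope $t=(y+r)/x$ of its secant through $(0,-r)$. (ii) The extra point $(0,r)$ is genuinely new: a parametrized point equals $(0,r)$ only if its first coordinate vanishes, i.e. $t=0$, but then the second coordinate is $-r\neq r$ since $r\neq -r$ in characteristic $\neq 2$. (iii) Completeness: every point of the circle other than $(0,r)$ is hit; given $(x_0,y_0)$ on the circle with $(x_0,y_0)\neq(0,r)$, either $x_0=0$ (forcing $y_0=-r$, the image of $t=0$) or $x_0\neq0$ and $t=(y_0+r)/x_0$ returns $(x_0,y_0)$ by the stereographic computation, with $t^2\neq-1$ because the excluded slopes meet the circle only at $(0,-r)$.

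It remains to count. The admissible parameters are all $t\in\mathbb{F}$ with $t^2\neq-1$. The polynomial $t^2+1$ has at most two roots, and if one root $s$ exists then $s\neq0$ and $-s\neq s$ is a second root, so $t^2=-1$ has either $0$ or $2$ solutions according as $\sqrt{-1}\notin\mathbb{F}$ or $\sqrt{-1}\in\mathbb{F}$. Hence the number of admissible $t$ is $\lvert\mathbb{F}\rvert$ or $\lvert\mathbb{F}\rvert-2$ respectively; adding the single extra point $(0,r)$ gives $\lvert\mathbb{F}\rvert+1$ or $\lvert\mathbb{F}\rvert-1$, matching the claim (with $\infty\pm1=\infty$ covering infinite $\mathbb{F}$). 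The step I expect to be most delicate is the completeness argument (iii): one must verify not only that the stereographic inverse returns the correct point but also that \emph{exactly} the excluded values $t^2=-1$ correspond to the tangent secants through $(0,-r)$ contributing no new point---this is precisely what produces the $\pm1$ discrepancy between the two subcases and therefore deserves the most care.
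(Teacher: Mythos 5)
Your proposal is correct and takes essentially the same route as the paper: stereographic projection from $\left(0,-r\right)$ via the secants $y = tx - r$ when $\chr\left(\mathbb{F}\right) \neq 2$, direct solving via $x^2+y^2=\left(x+y\right)^2$ in characteristic $2$, and the cardinality count by counting roots of $t^2+1$ --- with the welcome addition that you make explicit the injectivity, exhaustiveness, and counting details the paper only sketches before ``concluding'' the corollary. One cosmetic remark: your claim that every element of a characteristic-$2$ field has a unique square root overstates what holds in non-perfect fields (existence can fail), but your argument only uses injectivity of squaring together with the given root $r$ of $r^2$, so nothing breaks --- and the paper's own phrasing has the same blemish.
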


The next goal is to describe the squared distance between points in $ C \left((0,0),r \right)_{\mathbb{F}} $ in terms of parameters. For this we will prove the following lemma.

\begin{lemma} \label{dist_lemma}
	Let $ C \left( (m_1,m_2),r \right) $ be a circle in an arbitrary affine plane $ \mathbb{F} \times \mathbb{F} $ with $ \chr \left( \mathbb{F} \right) \neq 2 $ and
let $ P_1,P_2 $ be the points parametrized by $ t_1,t_2 \in \mathbb{F} $ such that $ t_j^2 \neq -1 $ for $ j = 1,2 $ as in \Cref{coro_parametrisation}, respectively. 
	Then we have
	\begin{align*}
		D^2 \left( P_1, P_2 \right) = \dfrac{4r^2 \left( t_1 - t_2 \right)^2}{ \left( t_1^2 + 1 \right) \left( t_2^2 + 1\right)}
	\end{align*}
and
	\begin{align*}
		D^2 \left( P_1, \left( m_1 , m_2 + r \right) \right) = \dfrac{4r^2}{ \left( t_1^2 + 1 \right)}.
	\end{align*}		
Furthermore, if $ \chr \left( \mathbb{F} \right) = 2 $, then all distances between points of $ C \left(  \left(m_1,m_2 \right),r \right) $ vanish.
	
\end{lemma}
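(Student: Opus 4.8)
The plan is to reduce everything to a circle centred at the origin and then carry out a direct computation, the crux of which is a single two-squares identity that collapses the numerator.

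First I would invoke \Cref{squared_distance_invariant}: since the squared distance is invariant under translations, I may assume $M = (0,0)$, so that by \Cref{coro_parametrisation} the point $P_j$ has coordinates
$$P_j = \left( \frac{2 t_j r}{t_j^2 + 1}, \ \frac{r(t_j^2 - 1)}{t_j^2 + 1} \right)$$
for $j = 1,2$, and the exceptional point is $(0,r)$. Putting everything over the common denominator $(t_1^2+1)(t_2^2+1)$, I would compute the two coordinate differences separately. A short manipulation factors each through $(t_1 - t_2)$: the $x$-difference produces the factor $(1 - t_1 t_2)$ and the $y$-difference the factor $(t_1 + t_2)$, explicitly
$$\frac{2 t_1 r}{t_1^2+1} - \frac{2 t_2 r}{t_2^2+1} = \frac{2r(t_1 - t_2)(1 - t_1 t_2)}{(t_1^2+1)(t_2^2+1)}, \qquad \frac{r(t_1^2-1)}{t_1^2+1} - \frac{r(t_2^2-1)}{t_2^2+1} = \frac{2r(t_1 - t_2)(t_1 + t_2)}{(t_1^2+1)(t_2^2+1)}.$$

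Squaring and adding, the common factor $4r^2(t_1 - t_2)^2$ comes out and the remaining bracket is $(1 - t_1 t_2)^2 + (t_1 + t_2)^2$. The decisive step is the Brahmagupta--Fibonacci (two-squares) identity
$$(1 - t_1 t_2)^2 + (t_1 + t_2)^2 = (1 + t_1^2)(1 + t_2^2),$$
which cancels exactly one factor of $(t_1^2+1)(t_2^2+1)$ from the denominator and yields the claimed formula for $D^2(P_1,P_2)$. The formula for $D^2\!\left(P_1,(0,r)\right)$ is even more direct: the $y$-difference simplifies to $-2r/(t_1^2+1)$ while the $x$-coordinate is $2t_1 r/(t_1^2+1)$, so the sum of the squared numerators is $4t_1^2 r^2 + 4r^2 = 4r^2(t_1^2+1)$, giving $4r^2/(t_1^2+1)$ after one cancellation.

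For the characteristic-two assertion I would use the other branch of \Cref{coro_parametrisation}: every point is of the form $(m_1 + t, m_2 + t + r)$, so for parameters $t,s$ both coordinate differences equal $t - s$ and the squared distance is $2(t-s)^2$, which vanishes because $2 = 0$ when $\chr(\mathbb{F}) = 2$. Since every step is a finite computation valid over any field of the stated characteristic, there is no genuine obstacle beyond bookkeeping; the only non-mechanical point is spotting the two-squares identity, without which the numerator does not simplify, and I expect the careful tracking of signs in the two factorizations to be the most error-prone part of the write-up.
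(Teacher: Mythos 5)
Your proposal is correct and follows essentially the same route as the paper's proof: a direct computation over the common denominator $\left(t_1^2+1\right)\left(t_2^2+1\right)$, factoring out $4r^2\left(t_1-t_2\right)^2$ and collapsing the remaining bracket via $\left(1-t_1t_2\right)^2+\left(t_1+t_2\right)^2=\left(t_1^2+1\right)\left(t_2^2+1\right)$, which is exactly the two-squares identity you name (the paper just expands it rather than citing it), with the same direct treatments of the exceptional point and of the characteristic-two case. The only cosmetic differences are that you factor the coordinate differences through $\left(t_1-t_2\right)$ before squaring and explicitly reduce to $M=(0,0)$ via \Cref{squared_distance_invariant}, whereas the paper keeps $m_1,m_2$ and lets them cancel in the differences.
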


\begin{proof}
	At first let $ \chr \left( \mathbb{F} \right) \neq 2 $. By calculation we get
		\begin{align*}
			D^2 \left( P_1,P_2 \right) &= \left( \frac{2t_1r}{t_1^2 + 1} - \frac{2t_2r}{t_2^2 + 1} \right)^2 + \left( \frac{r \left( t_1^2 - 1 \right)}{t_1^2 + 1} - \frac{r \left( t_2^2 - 1 \right)}{t_2^2 + 1} \right)^2 \\
	&= \frac{r^2}{\left( t_1^2 + 1 \right)^2 \left( t_2^2 + 1 \right)^2} \left[ 4 \left( t_1 t_2^2+ t_1 - t_2 t_1^2 -t_2  \right)^2 + 4\left( t_1^2-t_2^2 \right)^2 \right] \\
	&= \frac{4r^2\left( t_1-t_2\right)^2}{\left( t_1^2 + 1 \right)^2 \left( t_2^2 + 1 \right)^2} \left[ \left( 1- t_1t_2 \right)^2 + \left( t_1 + t_2 \right)^2 \right] \\
	&= \frac{4r^2\left( t_1-t_2\right)^2}{\left( t_1^2 + 1 \right)^2 \left( t_2^2 + 1 \right)^2} \left[ t_1^2t_2^2 + t_1^2 + t_2^2 + 1 \right] \\
	&= \frac{4r^2 \left( t_1-t_2\right)^2}{\left( t_1^2 + 1 \right) \left( t_2^2 + 1 \right)}
	\end{align*}
and	
	\begin{align*}
			D^2 \left( P_1, \left( m_1 , m_2 + r \right) \right) &= \left( \frac{2t_1r}{t_1^2 + 1} \right)^2 + \left( \frac{r \left( t_1^2 - 1 \right)}{t_1^2 + 1} - r  \right)^2 \\
		&= \frac{r^2}{\left( t_1^2 + 1 \right)^2} \left[4t_1^2 + \left( t_1^2-1-t_1^2-1 \right)^2 \right] \\
	&= \frac{4r^2}{t_1^2+1}.		
	\end{align*}
	
If $ \chr \left( \mathbb{F} \right) = 2 $, then	
	$$ D^2 \left( P_1,P_2 \right) = \left( t_1-t_2 \right)^2 + \left( t_1 - t_2 \right)^2 = 0 $$
and
	$$ D^2 \left( P_1, \left( m_1 , m_2 + r \right) \right) = t_1^2 + t_1^2 = 0 .$$	
\end{proof}

By using \Cref{dist_lemma} we can deduce the next statement.
	\begin{corollary} \label{coro_no_vanish}
		If $ \mathbb{F} $ is an arbitrary field with $ \chr \left( \mathbb{F} \right) \neq 2 $, then there are no vanishing squared distances between different points of a circle in $ \mathbb{F} \times \mathbb{F} $.
	\end{corollary}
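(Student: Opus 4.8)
The plan is to reduce everything to the explicit distance formulas of \Cref{dist_lemma} and to check that, under the hypothesis $\chr\left(\mathbb{F}\right) \neq 2$, every factor occurring in those formulas is nonzero. So I would fix a circle $C\left((m_1,m_2),r\right)_{\mathbb{F}}$ with $\chr\left(\mathbb{F}\right) \neq 2$ and take two distinct points $P_1 \neq P_2$ on it. By \Cref{coro_parametrisation}, each of them is either parametrized by some $t \in \mathbb{F}$ with $t^2 \neq -1$ or coincides with the exceptional point $N \coloneqq \left(m_1, m_2 + r\right)$. This yields a short case analysis; the degenerate possibility that both points equal $N$ is immediately excluded, since then $P_1 = P_2$.

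In the case where one of the points is the exceptional point, say $P_2 = N$ and $P_1$ is parametrized by $t_1$, the second formula of \Cref{dist_lemma} gives $D^2\left(P_1, N\right) = \frac{4r^2}{t_1^2 + 1}$. Here the denominator is nonzero by the parametrization constraint $t_1^2 \neq -1$, while in the numerator $r \neq 0$ because $r \in \mathbb{F}^{*}$ and $4 = 2^2 \neq 0$ because $\chr\left(\mathbb{F}\right) \neq 2$. Since $\mathbb{F}$ has no zero divisors, the quotient is nonzero.

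In the remaining case both points are parametrized, by $t_1$ and $t_2$ with $t_j^2 \neq -1$, and the first formula of \Cref{dist_lemma} yields $D^2\left(P_1, P_2\right) = \frac{4r^2\left(t_1 - t_2\right)^2}{\left(t_1^2 + 1\right)\left(t_2^2 + 1\right)}$. As before $4r^2 \neq 0$ and the denominator is nonzero, so the only factor to control is $\left(t_1 - t_2\right)^2$. The point I would stress is the logical direction: since the parametrization of \Cref{coro_parametrisation} is a single-valued function of $t$, the equality $t_1 = t_2$ would force $P_1 = P_2$; hence from $P_1 \neq P_2$ I conclude $t_1 \neq t_2$, and then $\left(t_1 - t_2\right)^2 \neq 0$ because $\mathbb{F}$ is an integral domain. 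Therefore $D^2\left(P_1, P_2\right) \neq 0$.

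This result is essentially a corollary bookkeeping exercise, so there is no real obstacle; the only step requiring a moment of care is the one just highlighted, namely arguing $P_1 \neq P_2 \Rightarrow t_1 \neq t_2$ (from single-valuedness of the parametrization) and then invoking the absence of zero divisors to pass from $t_1 \neq t_2$ to $\left(t_1 - t_2\right)^2 \neq 0$. Everything else is a direct verification that $4$, $r$, and the denominators are units in the relevant sense.
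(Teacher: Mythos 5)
Your proof is correct and takes essentially the same route as the paper, which deduces the corollary directly from the explicit formulas of \Cref{dist_lemma}: since $ \chr \left( \mathbb{F} \right) \neq 2 $, the factors $ 4 $, $ r^2 $ and $ \left( t_1 - t_2 \right)^2 $ as well as the denominators $ t_j^2 + 1 $ are all nonzero in a field without zero divisors. Your case analysis and the single-valuedness argument for $ P_1 \neq P_2 \Rightarrow t_1 \neq t_2 $ simply make explicit the bookkeeping the paper leaves to the reader.
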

	
In case $ \chr \left( \mathbb{F} \right) = 2 $ one can also show  that each squared distance between any two points of a finite Cartesian product of commutative rings with $ 1 $ is a square with respect to the underlying ring, see \cite[Lemma 2]{Sascha_Kurz2}.

Note that any affine plane over an arbitrary field $ \mathbb{F} $ contains two different points with squared distance equal to zero if and only if the element $ -1 $ has a root in $ \mathbb{F} $. However, on a circle over an affine plane, either all mutual squared distances between the points are zero or none of them vanishes.

\begin{example} \label{ex_f5_zero_radius}
	Consider the finite field $ \mathbb{F}_5 $. Since $ 2^2 = 4 = -1 $, the affine plane $ \mathbb{F}_5 $ contains different points with squared distance equal to zero. For example, the squared distance of the points $ P_1 \coloneqq \left( 1,2 \right) $ and $  P_2 \coloneqq \left( 2,4 \right) $ vanishes. By \Cref{coro_no_vanish} this means that there exists no circle in $ \mathbb{F}_5 \times \mathbb{F}_5 $ containing $ P_1 $ and $ P_2 $. If we allow the radius of such a circle containing both of these points to be equal to zero, then the center of this circle would be collinear to $ P_1 $ and $ P_2 $. This is because the center $ M = \left( m_1,m_2 \right) \in \mathbb{F}_5^2 $ of a circle containing both points must satisfy the equations:
	\begin{align*}
		\left( 1 - m_1 \right)^2 + \left( 2 - m_2 \right)^2 = 0 \\
		\left( 2 - m_1 \right)^2 + \left( 4 - m_2 \right)^2 = 0		
	\end{align*}		
If we simplify both equations, then we get $ 2m_1 = m_2 $ and the points satisfying them are all collinear to  $ P_1 $ and $ P_2 $. In particular, $ \left( 0,0 \right) $ is such a point and if we choose $ M = \left( 0,0 \right) $, then the equation $ x^2 + y^2 = 0 \in \mathbb{F}_5[x,y] $ is not irreducible, i.e. it splits in its components $ y + 2x = 0 $ and $ y + 3x = 0 $. The set of solutions such that at least one of the equations is satisfied is $ 9 $ here, so we have more solutions than $ 4 $ which \Cref{coro_parametrisation} would suggest. By the same statement we get that a splitting of the equality $ x^2 + y^2 = r $ over $ \mathbb{F}_5 $ can only happen if and only if $ r = 0 $. These observations justify why we exclude $ r = 0 $ in the equation of the circle. Whereas other conics as the parabola $ y = x^2 $ in the same affine plane clearly contains $ P_1 $ and $ P_2 $. 
\end{example}

\end{subsection}
\end{section}

\begin{section}{Maximal circular point sets}

\begin{subsection}{The case if $ \mathbb{F} $ is a prime field}

In this section we would like to identify the e-maximal and c-maximal circular point sets on circles in affine prime field planes. 
But before we start with this, let us consider a further example.

\begin{example}
	In \Cref{F_7_with_distances} we considered $ \mathbb{F}_7 $, a prime field which does not contain any square root of $ -1 $. Hence, let us now examine the circle 
	\begin{displaymath}
		C \left((7,11),6 \right)_{\mathbb{F}_{13}} \subseteq \mathbb{F}_{13} \times \mathbb{F}_{13}.
	\end{displaymath}	
Since $ 5^2 = 25 = -1 $, the field $ \mathbb{F}_{13} $ contains a square root of $ -1 $. Thus, the circle contains twelve elements
	\begin{gather*}
		C \left((7,11),6 \right)_{\mathbb{F}_{13}} = \left\{ \left( 7,5 \right), \left( 0,11 \right),\left( 4,12 \right),\left( 8,8 \right),\left( 6,1 \right),\left( 10,10 \right), \right. \\ \left. \left( 4,10 \right),\left( 8,1 \right), \left( 6,8 \right), \left( 10,12 \right), \left( 1,11 \right),\left( 7,4 \right) \right\}
	\end{gather*}
by \Cref{coro_parametrisation} and the set of all rational squared distances is 
	\begin{displaymath}
		\square_{\mathbb{F}_{13}} \coloneqq \left\{ 0,1,3,4,9,10,12 \right\}.
	\end{displaymath}
Now we can determine the rational squared distances between the points as one can see in \Cref{F_13_with_distances}. The violet and dark blue lines are rational squared distances and the points without a connecting line have non-rational squared distance from each other. We clearly see that the e- and c-maximal circular point sets of $ C \left((7,11),6 \right)_{\mathbb{F}_{13}} $ are the two disjoint subsets connected either by violet or dark blue lines. 
\end{example}

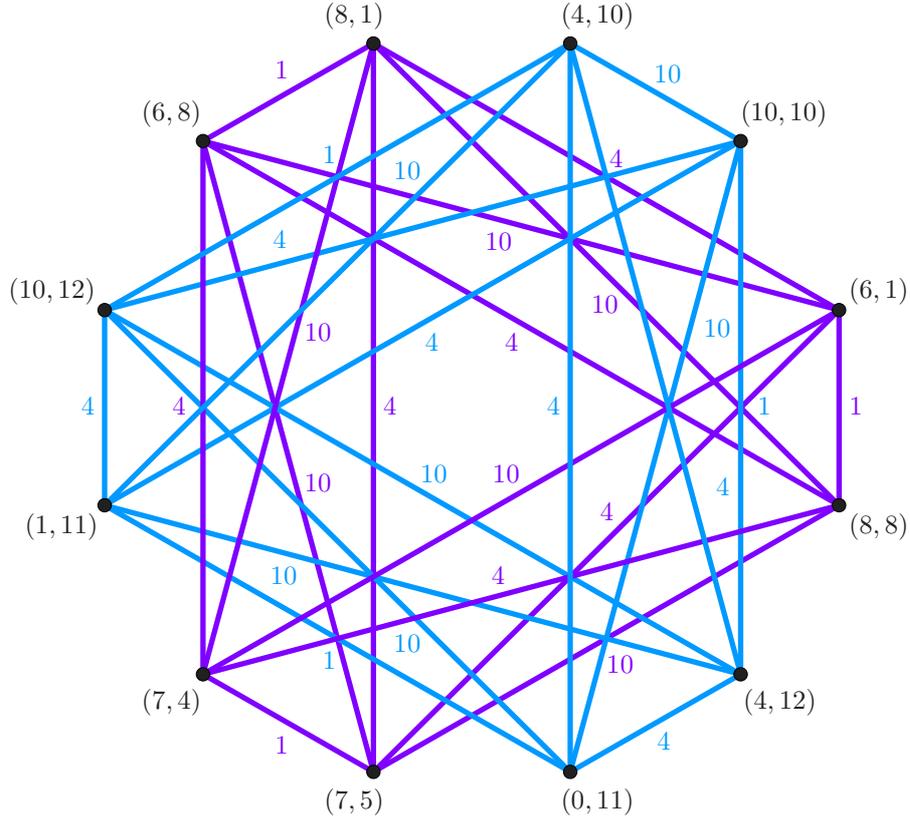
\begin{figure}[h]  
\begin{center}
\pagestyle{empty}

\definecolor{qqzzff}{rgb}{0.,0.6,1.}
\definecolor{qqqqff}{rgb}{0.,0.,1.}
\definecolor{sqsqsq}{rgb}
{0.12549019607843137,0.12549019607843137,0.12549019607843137}
\definecolor{xfqqff}{rgb}{0.4980392156862745,0.,1.}
\begin{tikzpicture}[line cap=round,line join=round,>=triangle 45,x=0.5cm,y=0.5cm]
\clip(-13.,-12.) rectangle (13.,12.);
\draw [line width=2.pt,color=xfqqff] (-2.5881904510252,-9.659258262890663)-- (9.659258262890685,-2.588190451025199);
\draw [line width=2.pt,color=xfqqff] (9.659258262890685,-2.588190451025199)-- (9.659258262890681,2.588190451025214);
\draw [line width=2.pt,color=xfqqff] (-2.588190451025208,9.659258262890681)-- (-7.0710678118654755,7.071067811865474);
\draw [line width=2.pt,color=xfqqff] (-7.0710678118654755,7.071067811865474)-- (-2.5881904510252,-9.659258262890663);
\draw [line width=2.pt,color=xfqqff] (-2.588190451025208,9.659258262890681)-- (-2.5881904510252,-9.659258262890663);
\draw [line width=2.pt,color=xfqqff] (9.659258262890681,2.588190451025214)-- (-2.5881904510252,-9.659258262890663);
\draw [line width=2.pt,color=xfqqff] (-7.0710678118654755,7.071067811865474)-- (9.659258262890685,-2.588190451025199);
\draw [line width=2.pt,color=xfqqff] (-7.0710678118654755,7.071067811865474)-- (9.659258262890681,2.588190451025214);
\draw [line width=2.pt,color=xfqqff] (-2.588190451025208,9.659258262890681)-- (9.659258262890685,-2.588190451025199);
\draw [line width=2.pt,color=xfqqff] (-2.588190451025208,9.659258262890681)-- (9.659258262890681,2.588190451025214);
\draw [line width=2.pt,color=qqzzff] (2.5881904510252163,-9.659258262890681)-- (7.071067811865483,-7.071067811865469);
\draw [line width=2.pt,color=qqzzff] (2.5881904510252163,-9.659258262890681)-- (7.071067811865475,7.0710678118654755);
\draw [line width=2.pt,color=qqzzff] (2.5881904510252163,-9.659258262890681)-- (-9.659258262890681,2.5881904510252096);
\draw [line width=2.pt,color=qqzzff] (2.5881904510252163,-9.659258262890681)-- (-9.659258262890683,-2.588190451025205);
\draw [line width=2.pt,color=qqzzff] (7.071067811865483,-7.071067811865469)-- (-9.659258262890683,-2.588190451025205);
\draw [line width=2.pt,color=qqzzff] (7.071067811865483,-7.071067811865469)-- (-9.659258262890681,2.5881904510252096);
\draw [line width=2.pt,color=qqzzff] (7.071067811865483,-7.071067811865469)-- (7.071067811865475,7.0710678118654755);
\draw [line width=2.pt,color=qqzzff] (7.071067811865475,7.0710678118654755)-- (-9.659258262890683,-2.588190451025205);
\draw [line width=2.pt,color=qqzzff] (7.071067811865475,7.0710678118654755)-- (-9.659258262890681,2.5881904510252096);
\draw [line width=2.pt,color=qqzzff] (-9.659258262890681,2.5881904510252096)-- (-9.659258262890683,-2.588190451025205);
\draw [line width=2.pt,color=xfqqff] (-2.5881904510252,-9.659258262890663)-- (-7.071067811865477,-7.071067811865472);
\draw [line width=2.pt,color=xfqqff] (9.659258262890685,-2.588190451025199)-- (-7.071067811865477,-7.071067811865472);
\draw [line width=2.pt,color=xfqqff] (9.659258262890681,2.588190451025214)-- (-7.071067811865477,-7.071067811865472);
\draw [line width=2.pt,color=xfqqff] (-2.588190451025208,9.659258262890681)-- (-7.071067811865477,-7.071067811865472);
\draw [line width=2.pt,color=xfqqff] (-7.0710678118654755,7.071067811865474)-- (-7.071067811865477,-7.071067811865472);
\draw [line width=2.pt,color=qqzzff] (2.5881904510252163,-9.659258262890681)-- (2.5881904510252074,9.659258262890685);
\draw [line width=2.pt,color=qqzzff] (7.071067811865483,-7.071067811865469)-- (2.5881904510252074,9.659258262890685);
\draw [line width=2.pt, color=qqzzff] (7.071067811865475,7.0710678118654755)-- (2.5881904510252074,9.659258262890685);
\draw [line width=2.pt,color=qqzzff] (-9.659258262890683,-2.588190451025205)-- (2.5881904510252074,9.659258262890685);
\draw [line width=2.pt,color=qqzzff] (-9.659258262890681,2.5881904510252096)-- (2.5881904510252074,9.659258262890685);
\begin{scriptsize}
\draw [fill=sqsqsq] (2.5881904510252074,9.659258262890685) circle (2.5pt);
\draw[color=sqsqsq] (3.2,10.4) node {\fontsize{10}{0} $ \left( 4,10 \right)$};
\draw[color=sqsqsq] (-3.2,10.4) node {\fontsize{10}{0} $ \left( 8,1 \right)$};
\draw[color=sqsqsq] (-3.2,-10.45) node {\fontsize{10}{0} $ \left( 7,5 \right)$};
\draw[color=sqsqsq] (3.2,-10.45) node {\fontsize{10}{0} $ \left( 0,11 \right)$};
\draw[color=sqsqsq] (-8.,7.8) node {\fontsize{10}{0} $ \left( 6,8 \right)$};
\draw[color=sqsqsq] (8.1,7.8) node {\fontsize{10}{0} $ \left( 10,10 \right)$};
\draw[color=sqsqsq] (-8.,-7.8) node {\fontsize{10}{0} $ \left( 7,4 \right)$};
\draw[color=sqsqsq] (8.0,-7.8) node {\fontsize{10}{0} $ \left( 4,12 \right)$};
\draw[color=sqsqsq] (-11.15,3.1) node {\fontsize{10}{0} $ \left( 10,12 \right)$};
\draw[color=sqsqsq] (10.6,3.1) node {\fontsize{10}{0} $ \left( 6,1 \right)$};
\draw[color=sqsqsq] (-10.9,-3.15) node {\fontsize{10}{0} $ \left( 1,11 \right)$};
\draw[color=sqsqsq] (10.6,-3.15) node {\fontsize{10}{0} $ \left( 8,8 \right)$};
\draw [fill=sqsqsq] (7.071067811865475,7.0710678118654755) circle (2.5pt);
\draw [fill=sqsqsq] (-2.588190451025208,9.659258262890681) circle (2.5pt);
\draw [fill=sqsqsq] (-7.0710678118654755,7.071067811865474) circle (2.5pt);
\draw [fill=sqsqsq] (-9.659258262890683,-2.588190451025205) circle (2.5pt);
\draw [fill=sqsqsq] (7.071067811865483,-7.071067811865469) circle (2.5pt);
\draw [fill=sqsqsq] (9.659258262890681,2.588190451025214) circle (2.5pt);
\draw [fill=sqsqsq] (-9.659258262890681,2.5881904510252096) circle (2.5pt);
\draw [fill=sqsqsq] (-7.071067811865477,-7.071067811865472) circle (2.5pt);
\draw [fill=sqsqsq] (2.5881904510252163,-9.659258262890681) circle (2.5pt);
\draw [fill=sqsqsq] (9.659258262890685,-2.588190451025199) circle (2.5pt);
\draw [fill=sqsqsq] (-2.5881904510252,-9.659258262890663) circle (2.5pt);
\draw[color=xfqqff] (-7.8,0.05) node {\fontsize{10}{0} $ 4 $};
\draw[color=qqzzff] (-10.2,0.05) node {\fontsize{10}{0} $ 4 $};
\draw[color=qqzzff] (2.05,0.05) node {\fontsize{10}{0} $ 4 $};
\draw[color=qqzzff] (7.6,0.05) node {\fontsize{10}{0} $ 1 $};
\draw[color=xfqqff] (-2.25,0.05) node {\fontsize{10}{0} $ 4 $};
\draw[color=xfqqff] (10.0,0.05) node {\fontsize{10}{0} $ 1 $};
\draw[color=qqzzff] (-1.8,-6.25) node {\fontsize{10}{0} $ 10 $};
\draw[color=qqzzff] (-1.8,6.3) node {\fontsize{10}{0} $ 10 $};
\draw[color=xfqqff] (0.6,4.4) node {\fontsize{10}{0} $ 10 $};
\draw[color=xfqqff] (0.6,-4.45) node {\fontsize{10}{0} $ 4 $};
\draw[color=xfqqff] (0.95,1.75) node {\fontsize{10}{0} $ 4 $};
\draw[color=xfqqff] (0.8,-1.75) node {\fontsize{10}{0} $ 10 $};
\draw[color=qqzzff] (-1.15,1.75) node {\fontsize{10}{0} $ 4 $};
\draw[color=qqzzff] (-1.1,-1.75) node {\fontsize{10}{0} $ 10 $};
\draw[color=xfqqff] (3.8,-6.8) node {\fontsize{10}{0} $ 10 $};
\draw[color=xfqqff] (3.7,6.6) node {\fontsize{10}{0} $ 4 $};
\draw[color=xfqqff] (3.45,-2.75) node {\fontsize{10}{0} $ 4 $};
\draw[color=xfqqff] (3.4,2.75) node {\fontsize{10}{0} $ 10 $};
\draw[color=qqzzff] (-3.85,-6.7) node {\fontsize{10}{0} $ 1 $};
\draw[color=qqzzff] (-3.85,6.7) node {\fontsize{10}{0} $ 1 $};
\draw[color=xfqqff] (-4.15,-2.) node {\fontsize{10}{0} $ 10 $};
\draw[color=xfqqff] (-4.15,2.) node {\fontsize{10}{0} $ 10 $};
\draw[color=qqzzff] (6.5,-2.1) node {\fontsize{10}{0} $ 4 $};
\draw[color=qqzzff] (6.35,2.1) node {\fontsize{10}{0} $ 10 $};
\draw[color=qqzzff] (5.05,8.85) node {\fontsize{10}{0} $ 10 $};
\draw[color=qqzzff] (4.95,-8.85) node {\fontsize{10}{0} $ 4 $};
\draw[color=xfqqff] (-5.1,8.95) node {\fontsize{10}{0} $ 1 $};
\draw[color=xfqqff] (-5.1,-8.95) node {\fontsize{10}{0} $ 1 $};
\draw[color=qqzzff] (-5.05,-4.45) node {\fontsize{10}{0} $ 10 $};
\draw[color=qqzzff] (-5.15,4.45) node {\fontsize{10}{0} $ 4 $};
\end{scriptsize}
\end{tikzpicture}

\caption{The points of $ C \left((7,11),6 \right)_{\mathbb{F}_{13}} $ and their rational squared distances} 
\label{F_13_with_distances}
\end{center}
\end{figure}

If we compare \Cref{F_7_with_distances} with \Cref{F_13_with_distances}, we might expect that the c-maximal circular point sets on the same circle in prime field planes are always two disjoint subsets of equal cardinality. We will see later that this is true. Additionally, we see that the occurring squared distances from one point to the others in such c-maximal circular point set seem to be the same. To describe this observation we would like to introduce the next definition followed by an example and a non-example of other algebraic curves.

\begin{definition}
	An algebraic curve in a plane over a field $ \mathbb{F} $ has the {\em uniformity property} if for any two points $ P $ and $ Q $ on the curve and for any $ n \in \mathbb{N} $ the following holds true:
	If $ d \in \mathbb{F} $ is a squared distance between $ P $ and $ n $ different points of the curve, then there exists also $ n $ different points on the same curve which have squared distance $ d $ to $ Q $.
\end{definition}
	
\begin{example} \label{ex_uniform_prop}
	The parabola in the finite plane $ \mathbb{F}_{5} \times \mathbb{F}_{5} $ defined by the equation $ y = x^2 $ does not have the uniformity property: there are five different points on this parabola and the squared distance $ 1 $ only occur between one pair of these points. Hence, the uniformity property is not satisfied by this parabola.
\newline
Whereas for the line $ y = ax + b $ it seems to be intuitively clear that the uniformity property must hold over every field plane $ \mathbb{F} $ where $ a \in \mathbb{F}^{*} $ and $ b \in \mathbb{F} $. Let $ \mathbb{F} $ be arbitrary and $ P = \left( p_1,p_2 \right) $, $ Q = \left( q_1,q_2 \right) $, $ R_i = \left( x_i,y_i \right) $ be different points on this line for $ i = 1,2, \dots, n $ with $ n \in \mathbb{N} $. Assume that the squared distances between $ P $ and all these $ R_i $'s is $ d \in \mathbb{F} $. Define $ S_i \coloneqq \left( x_i + q_1 - p_1,y_i + q_2 - p_2 \right) $ for $ i = 1,2,...,n $, then these points are on the same line and must be different from each other with squared distance $ d $ to $ Q $ which shows the uniformity property.
\end{example}

Soon we are ready to prove that every circle on an arbitrary field plane has the uniformity property. For this we will work with the set $ \Theta_{M}^{\mathbb{F}} $ for $ M = \left( 0,0 \right) $.



\begin{lemma} \label{lemma_uniform_prop}
	Every circle in an arbitrary field plane satisfies the uniformity property.
\end{lemma}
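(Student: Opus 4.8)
The plan is to exploit the rotation group $\Theta_{(0,0)}^{\mathbb{F}}$, which acts on the circle, together with the distance-invariance already established in \Cref{squared_distance_invariant}. Since squared distances are invariant under translation, I would first reduce to the case $M = (0,0)$, so that the circle is $C\left((0,0),r\right)_{\mathbb{F}}$ with defining equation $x^2 + y^2 = r^2$. The whole statement then reduces to showing that for any two points $P,Q$ on the circle there is a single map preserving both the circle and all squared distances that carries $P$ to $Q$; the uniformity property falls out by transporting witnesses along this map.

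The core of the argument is to show that $\Theta_{(0,0)}^{\mathbb{F}}$ acts transitively on the circle. First I would check that every rotation $\theta^{a,b}$ maps the circle onto itself: if $x^2 + y^2 = r^2$, then its image has ``norm'' $(a^2+b^2)(x^2+y^2) = r^2$ because $a^2 + b^2 = 1$, and since $\theta^{a,b}$ is linear with determinant $a^2 + b^2 = 1 \ne 0$ it is a bijection of $\mathbb{F}^2$, hence a bijection of the circle onto itself. Next, given $P = (p_1,p_2)$ and $Q = (q_1,q_2)$ on the circle, I would produce a rotation sending $P$ to $Q$ by solving the linear system $a p_1 + b p_2 = q_1$, $-b p_1 + a p_2 = q_2$ for $(a,b)$; its coefficient matrix has determinant $-(p_1^2+p_2^2) = -r^2 \ne 0$, yielding the unique solution $a = (p_1 q_1 + p_2 q_2)/r^2$, $b = (p_2 q_1 - p_1 q_2)/r^2$.

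With $\theta \coloneqq \theta^{a,b}$ in hand satisfying $\theta(P) = Q$, the uniformity property follows immediately. If $R_1,\dots,R_n$ are the $n$ distinct points of the circle with $D^2(P,R_i) = d$, then $\theta(R_1),\dots,\theta(R_n)$ are $n$ distinct points of the circle (distinct because $\theta$ is injective, and on the circle because $\theta$ preserves it), and by \Cref{squared_distance_invariant} we get $D^2(Q,\theta(R_i)) = D^2(\theta(P),\theta(R_i)) = D^2(P,R_i) = d$. This exhibits the required $n$ points at squared distance $d$ from $Q$. The computation is characteristic-free, so it also covers $\chr(\mathbb{F}) = 2$; in that case \Cref{dist_lemma} already forces $d = 0$, and the statement is immediate anyway.

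The main obstacle is the transitivity step, and specifically verifying that the solution $(a,b)$ really satisfies $a^2 + b^2 = 1$. This reduces to the Brahmagupta--Fibonacci identity $(p_1 q_1 + p_2 q_2)^2 + (p_2 q_1 - p_1 q_2)^2 = (p_1^2 + p_2^2)(q_1^2 + q_2^2) = r^2 \cdot r^2$, after which $a^2 + b^2 = r^4/r^4 = 1$. Once this identity secures that the rotation group acts transitively on the circle, everything else is routine: the argument is just transport of the distance-$d$ witnesses along a distance-preserving bijection of the circle.
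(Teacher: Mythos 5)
Your proof is correct and follows essentially the same route as the paper: both establish that $\Theta_{(0,0)}^{\mathbb{F}}$ acts transitively on the circle and then transport the distance-$d$ witnesses along a rotation, using \Cref{squared_distance_invariant}. The only difference is cosmetic: the paper first normalizes to radius $1$ (so that the points of the unit circle themselves serve as rotation parameters, and the rotation carrying $P$ to $Q$ is the composition $\theta^{(q_1,q_2)} \circ \left(\theta^{(p_1,p_2)}\right)^{-1}$), whereas you work with general radius $r$ and produce the rotation explicitly via Cramer's rule and the Brahmagupta--Fibonacci identity, which makes your version marginally more self-contained since it avoids the paper's scaling reduction.
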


\begin{proof}
	Without loss of generality, we can consider a circle centered at the origin in a plane over an arbitrary field $ \mathbb{F} $ because squared distances are preserved when we shift a circle by \Cref{squared_distance_invariant}. Moreover, we can assume that this circle has radius $ 1 $. This follows from the fact that scaling the radius with a non-vanishing factor in a prime field also scales all the mutual squared distances of the points by the same squared factor. Hence, the scaled circle has the uniformity property if and only if the original circle does so.

Recall that the group $ \Theta_{\left( 0,0 \right)}^{\mathbb{F}} $ acts on the plane $ \mathbb{F} \times \mathbb{F} $ such that squared distances between points are preserved, see \Cref{squared_distance_invariant}. 
Moreover, it is easy to see that $ \Theta_{\left( 0,0 \right)}^{\mathbb{F}} $ restricted to the unit circle $ C \left(\left( 0,0 \right),1 \right)_{\mathbb{F}} $ permutes its points. Hence, it is enough to show that $\Theta_{\left( 0,0 \right)}^{\mathbb{F}} $ acts transitively on the plane $ \mathbb{F} \times \mathbb{F} $ restricted to $ C \left(\left( 0,0 \right),1 \right)_{\mathbb{F}} $ because then the uniformity property is clearly satisfied. 
Thus, consider two points
	\begin{align*}
		\underbrace{ \left(p_1,p_2 \right)}_{=: P}, \underbrace{\left( q_1,q_2 \right)}_{=: Q} \in C \left(\left( 0,0 \right),1 \right)_{\mathbb{F}}.
	\end{align*}
Then we get that  
	$$ \left( \theta^{\left( q_1,q_2 \right)} \right) \circ \left(\theta^{\left( p_1,p_2 \right)}\right)^{-1} \in \Theta_{\left( 0,0 \right)}^{\mathbb{F}} $$
maps the point $ P $ to $ Q $ and so we are done.
\end{proof}

The last result can be used to prove the following.
\begin{proposition} \label{prop_fin_prim_field}
	Let $ \mathbb{F} $ be a finite prime field with $ \chr \left( \mathbb{F} \right) \neq 2 $. Then all points of any circle $ C \left((m_1,m_2),r \right)_{\mathbb{F}} $ in the affine plane $ \mathbb{F} \times \mathbb{F} $ can be divided into two disjoint e-maximal circular point sets of cardinality either $ \frac{ \vert \mathbb{F} \vert +1}{2} $ if $ \sqrt{-1} \notin \mathbb{F} $ or $ \frac{\vert \mathbb{F} \vert -1}{2} $ otherwise. 
\end{proposition}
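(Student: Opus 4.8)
The plan is to reduce r.s.d.\ on the circle to a two-coloring by the quadratic character of $\mathbb{F}$, and then to establish the equal-cardinality claim by an elementary point count. First I would normalize the geometry: since squared distances are invariant under translation by \Cref{squared_distance_invariant}, I may take $M=(0,0)$, and the factor $4r^2$ occurring throughout \Cref{dist_lemma} is a square, so the radius never affects whether a squared distance is a square. Because $\mathbb{F}$ is a prime field we have $\boldsymbol{P}(\mathbb{F})=\mathbb{F}$, so ``r.s.d.'' means exactly that the squared distance lies in $\square_{\mathbb{F}}$; moreover by \Cref{coro_no_vanish} distinct points of the circle have nonzero squared distance, so r.s.d.\ between distinct points means the distance is a nonzero square. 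Let $\chi$ be the quadratic character of $\mathbb{F}$. Using the parametrization in \Cref{coro_parametrisation}, the circle is $\{P_t : t\in\mathbb{F},\ t^2\neq-1\}\cup\{N\}$ with $N:=(m_1,m_2+r)$. From \Cref{dist_lemma}, $D^2(P_{t_1},P_{t_2})=4r^2(t_1-t_2)^2/((t_1^2+1)(t_2^2+1))$ has square numerator, so $P_{t_1},P_{t_2}$ have r.s.d.\ iff $\chi(t_1^2+1)=\chi(t_2^2+1)$; similarly $D^2(P_{t_1},N)=4r^2/(t_1^2+1)$ is a square iff $\chi(t_1^2+1)=+1$.

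This motivates the coloring $c(P_t):=\chi(t^2+1)\in\{\pm1\}$ (well defined since $t^2+1\neq0$ on the parametrized points) together with $c(N):=+1$. The two identities above say precisely that two points of the circle have r.s.d.\ if and only if they share the same color. Hence the circle partitions into $A:=c^{-1}(+1)$ and $B:=c^{-1}(-1)$, each a rational circular point set, with every cross-pair failing to have r.s.d. Since $N\in A$ and $B$ is nonempty (from the count below), adding any point of $B$ to $A$ (or of $A$ to $B$) breaks rationality, so $A$ and $B$ are disjoint e-maximal circular point sets that together exhaust $C((m_1,m_2),r)_{\mathbb{F}}$.

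The remaining and central task is to prove $|A|=|B|$. Let $N_+,N_-,N_0$ count the $t\in\mathbb{F}$ for which $t^2+1$ is a nonzero square, a non-square, or zero, so that $|A|=N_++1$ and $|B|=N_-$. The key step is to count the solutions $(s,t)\in\mathbb{F}^2$ of $s^2=t^2+1$ in two ways. Writing $s^2-t^2=(s+t)(s-t)=1$ and substituting $u=s+t,\ v=s-t$ (a bijection because $\chr(\mathbb{F})\neq2$) gives exactly one solution for each $u\in\mathbb{F}^{*}$, hence $|\mathbb{F}|-1$ solutions in total; grouping instead by $t$, each $t$ yields $2$, $0$, or $1$ values of $s$ according as $t^2+1$ is a nonzero square, a non-square, or zero, so the count is $2N_++N_0$. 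Equating gives $2N_++N_0=|\mathbb{F}|-1$. By \Cref{fact3_finite_fields}, $N_0=0$ when $\sqrt{-1}\notin\mathbb{F}$ and $N_0=2$ when $\sqrt{-1}\in\mathbb{F}$; combining this with $N_++N_-=|\mathbb{F}|-N_0$ forces $N_-=N_++1$ in both cases, whence $|A|=N_++1=N_-=|B|$, and the common value is $\frac{|\mathbb{F}|+1}{2}$ or $\frac{|\mathbb{F}|-1}{2}$ respectively. The main obstacle is exactly this cardinality equality, and the elementary double count of $s^2=t^2+1$ is what lets me avoid invoking general character-sum formulas.
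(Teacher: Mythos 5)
Your proof is correct, and its first half coincides with the paper's: the paper also partitions the circle into $C_1=\{P_t : t^2+1\notin\square_{\mathbb{F}^{*}}\}$ and $C_2=\{P_t : t^2+1\in\square_{\mathbb{F}^{*}}\}\cup\{(m_1,m_2+r)\}$, using \Cref{dist_lemma} and the fact that the map $\sign$ onto $\{\pm1\}$ is a homomorphism, which is exactly your quadratic-character coloring. Where you genuinely diverge is the equal-cardinality step. The paper counts nothing: it invokes the uniformity property (\Cref{lemma_uniform_prop}, proved via the transitive action of the rotation group on the circle) to conclude that the two disjoint e-maximal sets have the same size, and then halves the total circle cardinality from \Cref{coro_parametrisation}. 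You instead compute $|A|=N_++1$ and $|B|=N_-$ outright by double-counting the hyperbola $s^2-t^2=1$: the substitution $u=s+t$, $v=s-t$ (bijective since $\chr(\mathbb{F})\neq2$) gives $|\mathbb{F}|-1$ solutions, fibering over $t$ gives $2N_++N_0$, and $N_0\in\{0,2\}$ according to whether $\sqrt{-1}\in\mathbb{F}$, forcing $N_-=N_++1$ in both cases. I checked the arithmetic; it is consistent with the cardinalities in \Cref{coro_parametrisation} and yields exactly the claimed values $\frac{|\mathbb{F}|+1}{2}$ and $\frac{|\mathbb{F}|-1}{2}$. Your route is more elementary and self-contained — it needs neither \Cref{lemma_uniform_prop} nor the cardinality statement of \Cref{coro_parametrisation}, and as a by-product it produces the exact sizes $N_\pm$ of the two color classes — while the paper's route is shorter given machinery it develops anyway (uniformity is reused later, e.g.\ in the proof of \Cref{theo_general_c-maximal}). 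One cosmetic remark: for $N_0$ you cite \Cref{fact3_finite_fields}, but all you actually use is that $t^2=-1$ has either $0$ or exactly $2$ roots, which follows from $\chr(\mathbb{F})\neq2$ alone; the relation to $|\mathbb{F}|\bmod 4$ plays no role.
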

In case $ \chr \left( \mathbb{F} \right) = 2 $ we trivially have that all points of a circle are an e-maximal and so also a c-maximal circular point set.

\begin{proof}
At first we would like to show that all points of circles over these finite prime field planes can be partitioned into two groups of point sets such that all the points in each group have rational squared distance from each other, but no two points from different groups have r.s.d..
Consider the following map
	\begin{align*}
\mathrm{sign}: \mathbb{F}^{*} &\rightarrow \Set{-1,1} \\
   a &\mapsto \left\{
\begin{array}{lll}
1 &\textrm{if} \ a \in \square_{\mathbb{F}^{*}} \\
-1 &  \textrm{otherwise} \\
\end{array}
\right. .
	\end{align*}
Since $ \square_{\mathbb{F}^{*}} \subseteq \mathbb{F}^{*} $ is a subgroup of index $ 2 $ with respect to multiplication, we conclude that $ \sign $ is a group homomorphism. 

By writing $ P_t $ for the point parametrized by $ t $, as introduced before \Cref{coro_parametrisation}, we set:	
	\begin{align*}
		C_1 &\coloneqq \Set{ P_t \in C \left((m_1,m_2),r \right)_{\mathbb{F}} | t^2 + 1 \notin \square_{\mathbb{F}^{*}} } \\
		C_2 &\coloneqq \ \Set{ P_t \in C \left((m_1,m_2),r \right)_{\mathbb{F}} | t^2 + 1 \in \square_{\mathbb{F}^{*}} } \cup \Set{ \left( m_1 , m_2 + r \right) }
	\end{align*}	
	
Let $ t_1, t_2 \in \mathbb{F} $ be two different parameters which are not a square root of $ -1 $. Then we can deduce
	\begin{align*}
		D^2 \left( P_1, P_2 \right) \in \mathbb{F}^2 &\Longleftrightarrow \left( t_1^2 + 1 \right) \left( t_2^2 + 1\right) \in \square_{\mathbb{F}^{*}} \\
		D^2 \left( P_1, \left( m_1 , m_2 + r \right) \right) \in \mathbb{F}^2 &\Longleftrightarrow \left( t_1^2 + 1 \right) \in \square_{\mathbb{F}^{*}}
	\end{align*}
by \Cref{dist_lemma}. 

Hence, by the above and the fact that $ \sign $ is a group homomorphism we can conclude that all the points in $ C_1 $ and $ C_2 $ have rational squared distance from each other, respectively, but there are no points from different groups which have r.s.d.. This also shows that $ C_1 $ and $ C_2 $ are the only e-maximal circular point sets and they are disjoint by definition. By the uniformity property of the circle we can deduce that disjoint e-maximal circular point sets must have the same cardinality, i.e. $ C_1 $ and $ C_2 $ have the same cardinality and so we can conclude by \Cref{coro_parametrisation}.
\end{proof}


\begin{example}
	Consider again \Cref{F_7_with_distances} and 	\Cref{F_13_with_distances}. By \Cref{prop_fin_prim_field} each circle in $ \mathbb{F}_7 $ consists of two disjoint c-maximal circular point sets of cardinality $ 4 $ and in $ \mathbb{F}_{13} $ the cardinality of the c-maximal circular point sets is $ 6 $.
\end{example}

Now we would like to consider the case if $ \mathbb{F} $ is a prime field with $ \chr \left( \mathbb{F} \right) = \infty $. Then $ \mathbb{F} $ must be isomorphic to $ \mathbb{Q} $. Hence, the notion “rational” becomes more intuitive since a squared distance between two points on a circle in the plane over $ \mathbb{Q} $ is rational if and only if the Euclidean distance between these points is rational.
Observe that for finite prime fields the number of maximal circular point sets was equal to the index $ \left[ \mathbb{F}^{*} : \square_{\mathbb{F}^{*}} \right] $ which is  $ 1 $ if and only if $ \chr \left( \mathbb{F} \right) = 2 $ and otherwise $ 2 $. 
Before we go on, let us prove the following useful Lemma which gives us the index in the case that $ \mathbb{F} $ is infinite.

\begin{lemma} \label{lemma_cosets_of_Q}
	Let the set of primes in $ \mathbb{N} $ denote by $ \mathbb{P} $. We can decompose the multiplicative group $ \mathbb{Q}^{*} $ into the following cosets with respect to the subgroup $ \square_{\mathbb{Q}^{*}} $:
		\begin{displaymath}
		\mathbb{Q}^{*} = \coprod_{\substack{n \in \mathbb{N}, k \in \left\{ 0,1 \right\} \\ p_1,p_2, \ldots, p_n \in \mathbb{P} \\ p_1 < p_2 < \ldots < p_n}}{ \hspace{-1.5 em} \left( -1 \right)^{k} p_1 p_2 \ldots p_n \square_{\mathbb{Q}^{*}}}.
		\end{displaymath}
Moreover, we get $ \left[ \mathbb{Q}^{*} : \square_{\mathbb{Q}^{*}} \right] = \infty $ (countably infinite).		
\end{lemma}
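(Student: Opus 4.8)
The plan is to reduce everything to unique factorization in $\mathbb{Z}$ together with a single clean characterization of which rationals are squares. By the fundamental theorem of arithmetic, every $q \in \mathbb{Q}^{*}$ has a unique representation $q = (-1)^{k} \prod_{p \in \mathbb{P}} p^{a_p}$ with $k \in \{0,1\}$ and exponents $a_p \in \mathbb{Z}$, all but finitely many equal to zero. First I would record the key observation that $q \in \square_{\mathbb{Q}^{*}}$ if and only if $k = 0$ and every exponent $a_p$ is even. The "if" direction is immediate; the "only if" follows because a square $b^2$ has positive sign and even $p$-adic valuation for every prime $p$, again by unique factorization. This equivalence is the engine for the whole argument.

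To show that the listed cosets cover $\mathbb{Q}^{*}$, I would take an arbitrary $q = (-1)^k \prod_p p^{a_p}$ and split each exponent as $a_p = 2 \lfloor a_p/2 \rfloor + \varepsilon_p$ with $\varepsilon_p \in \{0,1\}$. Then $q = \left( (-1)^k \prod_{p : \varepsilon_p = 1} p \right) \cdot \left( \prod_p p^{\lfloor a_p/2 \rfloor} \right)^2$, so by the characterization above $q$ lies in the coset whose representative is $(-1)^k \prod_{p : \varepsilon_p = 1} p$. This is precisely one of the representatives appearing in the index of the coproduct: the increasing list of primes consists of those with $\varepsilon_p = 1$, and the case of no such primes ($n = 0$) produces the empty product $1$.

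For the disjointness that makes the union a genuine coproduct, I would suppose two representatives $(-1)^k p_1 \cdots p_n$ and $(-1)^{k'} q_1 \cdots q_m$ lie in the same coset, so that their quotient is a square. Applying the characterization once more, positivity of the quotient's sign forces $k = k'$ (both lie in $\{0,1\}$), and evenness of all valuations forces every prime to occur with the same multiplicity in numerator and denominator; since each prime appears at most once in a squarefree product, the two prime sets coincide, whence $n = m$ and $p_i = q_i$ for all $i$. Thus distinct representatives yield distinct cosets.

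Finally, the index $[\mathbb{Q}^{*} : \square_{\mathbb{Q}^{*}}]$ equals the number of such representatives, which is in bijection with $\{0,1\} \times \{\text{finite subsets of } \mathbb{P}\}$; since $\mathbb{P}$ is countably infinite, its family of finite subsets is countably infinite, and the extra factor of $2$ does not change this, giving a countably infinite index. I do not expect a genuine obstacle here: the proof is essentially bookkeeping with prime factorizations. The only place that requires care is stating and using the square-characterization correctly, in particular remembering to track the sign separately from the prime exponents, since a negative rational can never be a square in $\mathbb{Q}^{*}$.
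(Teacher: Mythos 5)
Your proof is correct and follows essentially the same route as the paper's: both rest on the fundamental theorem of arithmetic, reduce membership in $\square_{\mathbb{Q}^{*}}$ to parity of prime exponents (plus the sign), and count cosets via finite subsets of $\mathbb{P}$. Your explicit square-characterization and the quotient formulation of disjointness are only cosmetic variations on the paper's argument, so there is nothing substantive to flag.
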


\begin{proof}
	The last statement of \Cref{lemma_cosets_of_Q} follows directly from the decomposition of $ \mathbb{Q}^{*} $ into cosets and the fact that the number of all finite subsets of countably infinite set (here the set of all prime numbers $ \mathbb{P} $) must also be countably infinite, see \cite{Halb_Comb_Set}. Hence, it remains to show that the above decomposition of $ \mathbb{Q}^{*} $ is a decomposition of cosets. In particular, it remains to show that $ \mathbb{Q}_{+}^{*} $ can be decomposed in the positive cosets above. Consider an element $ q \in \mathbb{Q}_{+}^{*} $, then we can apply the fundamental theorem of arithmetic which gives us $ n \in \mathbb{N} $, $ m_i \in \mathbb{Z} \setminus \left\{ 0 \right\} \ \forall i = 1,2, \ldots, m $ and primes $ p_1 < p_2 < \ldots < p_n $ such that $ q =  \prod \limits_{i=1}^{n}{p_i^{m_i}} $. Then there are $ l \in \mathbb{N} $ indices $ i $ such that the numbers $ n_i $ are odd. We enumerate these indices by $ i_j $ $ \forall j = 1,2,\ldots l $. Thus, we clearly have that 
	$$ q \in \prod \limits_{j=1}^{l}{p_{i_j}} \square_{ \mathbb{Q}^{*}} .$$

To finish the proof we only have to show that all the positive cosets are different from each other. Therefore let $ n,m \in \mathbb{N} $, $ p_1, p_2, \ldots, p_n$ and $ q_1,q_2, \ldots, q_m $ be two sequences of primes where in each sequence all primes are different and increasing such that
	\begin{displaymath}
		p_1 p_2 \ldots p_n \square_{\mathbb{Q}_{+}^{*} } = q_1 q_2 \ldots q_m \square_{\mathbb{Q}_{+}^{*} }.
	\end{displaymath} 
Hence, we must have that the product of all these prime numbers must be in $ \square_{\mathbb{Q}_{+}^{*} } $, so if we decompose this product into prime factors, then their exponents must be even which implies that there are all equal to $ 2 $, $ n = m $ and $ p_i = q_i \ $ for all $ i = 1,2, \ldots, n $.
\end{proof}


\begin{proposition} \label{prop_infin_prim_field}
	The e-maximal circular point sets of a circle over $ \mathbb{Q} \times \mathbb{Q} $ are disjoint sets of (countably) infinite cardinality.
\end{proposition}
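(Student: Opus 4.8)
The plan is to reduce to the unit circle, to recognise \emph{r.s.d.} as an equivalence relation whose classes are exactly the e-maximal circular point sets, and then to control the cardinality of these classes by two independent inputs: a transitivity argument showing that all classes are equinumerous, and one explicitly infinite class.

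First I would normalise the circle. Since squared distances are translation invariant by \Cref{squared_distance_invariant}, I take the centre to be the origin; and since rescaling the radius by $1/r$ with $r\in\mathbb{Q}^{*}$ is a bijection onto $C\left((0,0),1\right)_{\mathbb{Q}}$ that divides every squared distance by the square $r^{2}$, the \emph{r.s.d.} relation is preserved, so I may assume the radius is $1$. Over $\mathbb{Q}$ we have $\sqrt{-1}\notin\mathbb{Q}$, so no parameter is a root of $-1$ and \Cref{coro_parametrisation} parametrises the circle by $t\in\mathbb{Q}$ together with the single point $(0,1)$. By \Cref{dist_lemma} with $r=1$, the points $P_{t}$ and $P_{s}$ have \emph{r.s.d.} iff $(t^{2}+1)(s^{2}+1)\in\square_{\mathbb{Q}^{*}}$, and $P_{t}$ and $(0,1)$ have \emph{r.s.d.} iff $t^{2}+1\in\square_{\mathbb{Q}^{*}}$. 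In both cases the condition says precisely that the two relevant values lie in the same coset of $\square_{\mathbb{Q}^{*}}$ in $\mathbb{Q}^{*}$; hence \emph{r.s.d.} is an equivalence relation on the points of the circle (this is the feature that generalises \Cref{prop_fin_prim_field}, where only two cosets occurred). Consequently the e-maximal circular point sets are exactly its equivalence classes, and in particular they are pairwise disjoint.

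Next I would pin down the cardinality. Each class is a subset of $C\left((0,0),1\right)_{\mathbb{Q}}\subseteq\mathbb{Q}^{2}$, which is countable, so every class is at most countable. To see that any two classes are equinumerous, recall from \Cref{squared_distance_invariant} that rotations preserve squared distances and therefore preserve \emph{r.s.d.}, and that the rotation group acts transitively on the circle, since $\theta^{\left(q_{1},q_{2}\right)}\circ\left(\theta^{\left(p_{1},p_{2}\right)}\right)^{-1}$ carries any point $P$ to any point $Q$, as computed in the proof of \Cref{lemma_uniform_prop}. A rotation carrying a point of one class to a point of another thus restricts to a bijection between the two classes, so all classes have the same cardinality.

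It then remains to exhibit one infinite class, which by the previous step forces every class to be countably infinite. I would use the class $\Set{P_{t} | t^{2}+1\in\square_{\mathbb{Q}^{*}}}\cup\Set{(0,1)}$: for each $v\in\mathbb{Q}^{*}$ the value $t_{v}\coloneqq\frac{1-v^{2}}{2v}$ satisfies $t_{v}^{2}+1=\left(\frac{1+v^{2}}{2v}\right)^{2}\in\square_{\mathbb{Q}^{*}}$, and distinct $v$ yield distinct $t_{v}$ apart from the single coincidence $vv'=-1$, so this class is infinite. Combined with the equinumerosity above and with countability, every e-maximal circular point set is countably infinite, proving the proposition. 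The main obstacle is exactly the infinitude of the classes: the transitivity argument cleanly reduces it to a single class, so the point of real content is that at least one class is infinite. As an alternative to the explicit family, one may identify the unit circle with the abelian group $G=\Set{(a,b)\in\mathbb{Q}^{2} | a^{2}+b^{2}=1}$ under rotation composition, check that $P$ and $Q$ have \emph{r.s.d.} iff $PQ^{-1}$ lies in the subgroup $G^{2}$ of squares, and conclude that the classes are the cosets of $G^{2}$, which is infinite because $G$ is infinite and the squaring map has kernel $\Set{\pm(1,0)}$ of order two.
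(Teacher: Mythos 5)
Your proposal is correct and follows essentially the same route as the paper's proof: reduction via \Cref{dist_lemma} to the coset criterion $\left(t^{2}+1\right)\left(s^{2}+1\right)\in\square_{\mathbb{Q}^{*}}$, equinumerosity of the classes through the transitive rotation action (which is precisely the mechanism behind the paper's uniformity property, \Cref{lemma_uniform_prop}, cited there instead of redone), and an explicit infinite family --- your $t_{v}=\frac{1-v^{2}}{2v}$ is the paper's $t=\frac{1}{2}\left(n-\frac{1}{n}\right)$ under the substitution $v=\frac{1}{n}$. Your closing alternative, identifying the classes with the cosets of the subgroup of squares $G^{2}$ in the rotation group (anticipating \Cref{prop_perf_and_square_root}), is a pleasant independent packaging, but the main argument matches the paper's.
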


\begin{proof}
	Let $ C \left((m_1,m_2),r \right)_{\mathbb{Q}} $ be an arbitrary circle over the rational plane $ \mathbb{Q} \times \mathbb{Q} $ and recall that there are no two points on this circle with vanishing squared distance from each other by \Cref{coro_no_vanish}. If $ P_1 $ and $ P_2 $ are parametrized by $ t_1, t_2 \in \mathbb{Q} $, respectively, then 
	\begin{align*}
D^2 \left( P_1, P_2 \right) \in \square_{\mathbb{Q}^{*}} &\Longleftrightarrow \left( t_1^2 + 1 \right) \left( t_2^2 + 1\right) \in \square_{\mathbb{Q}^{*}} \\
		D^2 \left( P_1, \left( m_1 , m_2 + r \right) \right) \in \square_{\mathbb{Q}^{*}} &\Longleftrightarrow \left( t_1^2 + 1 \right) \in \square_{\mathbb{Q}^{*}}
	\end{align*}
where $ \left( t_1^2 + 1 \right) \left( t_2^2 + 1\right) \in \square_{\mathbb{Q}^{*}} $ is equivalent to the fact that $ t_1^2 + 1 $ and $ t_2^2 + 1 $ belong to the same coset which follows by a similar argument as in the last part of the proof of \Cref{lemma_cosets_of_Q}. This shows that the e-maximal circular point sets of $ C \left((m_1,m_2),r \right)_{\mathbb{Q}} $ are equal to the unions of all points of $ C \left((m_1,m_2),r \right)_{\mathbb{Q}} $ parametrized by a $ t \in \mathbb{Q} $ such that $ t^2 + 1 $ is in the same coset where $ \left( m_1 , m_2 + r \right) $ belongs to the coset $ \square_{\mathbb{Q}^{*} } $. And the cardinalities of each such circular point set are equal by the uniformity property of the circle. 

It remains to show that one of these cosets contains infinitely many elements. For this let us show that there exists countably infinitely many $ t \in \mathbb{Q} $ such that $ t^2 + 1 \in \square_{\mathbb{Q}} $ or more concretely: Let us show that there exists infinitely many $ t,u \in \mathbb{Q} $ such that
	\begin{displaymath}
		t^2 + 1 = u^2 \Longleftrightarrow \left(t+u \right) \left(t-u \right) = -1
	\end{displaymath}
holds true.	
For this let us assume that we have $ n \in \mathbb{N} \setminus \left\{ 0 \right\} $ with
	\begin{align*}
		t + u &= n \\
		t - u &= -\frac{1}{n}.
	\end{align*}
Then we clearly see that we can solve for $ t $ and $ u $ in $ \mathbb{Q} $ such that we get
	\begin{align*}
		t = \dfrac{1}{2} \left( n - \frac{1}{n} \right) && u = \dfrac{1}{2} \left( n + \frac{1}{n} \right)
	\end{align*}
which satisfies the conditions above. Moreover, it is easy to see that for all $ n \in \mathbb{N} \setminus \left\{ 0 \right\} $, the resulting $ t $'s are different. 
Therefore each e-maximal circular point set in $ C \left((m_1,m_2),r \right)_{\mathbb{Q}} $ is of countably infinite cardinality.
\end{proof}

This means that the rational points, i.e. the points with rational coordinates of every circle in the real plane with non-vanishing and rational radius can be decomposed in countably many sets of countably many points such that in each set any two points have r.s.d..
A proof that the c-maximal circular point sets in $ \mathbb{R} \times \mathbb{R} $ are countably infinite and dense in the underlying circle with respect to the Euclidean topology can be done by the use of perfect distances, see \cite[Proposition 2.45, Lemma 2.47]{my_masterthesis}. In the next session we will generalize the notion of perfect distances and use it as a tool for the construction of e- and c-maximal circular point sets in all planes over any field extension. 

Finally, we can conclude the following result by \Cref{dist_lemma}, \Cref{prop_fin_prim_field}, \Cref{lemma_cosets_of_Q} and \Cref{prop_infin_prim_field} which summarizes the whole section. 

\begin{theorem}[e-maximal circular point sets over prime field planes] \label{theorem_prime_field}
	Let $ \mathbb{F} $ be any prime field. 
Then all circles in $ \mathbb{F} \times \mathbb{F} $ consists of exactly $ \left[ \mathbb{F}^{*} : \square_{ \mathbb{F}^{*}} \right]  $ different e-maximal circular point sets which are disjoint and of cardinality
	\begin{align*} 
	\left\{
\begin{array}{lll} 
\hspace{0.05cm} 
\lvert \mathbb{F} \rvert &  \textrm{if} \ \chr \left( \mathbb{F} \right) = 2 \\ [2.5ex]
\dfrac{\lvert \mathbb{F} \rvert -1}{2} & \textrm{if} \ \chr \left( \mathbb{F} \right) \neq 2 \ \textrm{and} \ \sqrt{-1} \in \mathbb{F} \\ [2.5ex]
\dfrac{\lvert \mathbb{F} \rvert + 1}{2} &  \textrm{otherwise} \\
\end{array}
\right. .
	\end{align*}		
\end{theorem}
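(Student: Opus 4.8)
The plan is to prove the statement by a case distinction according to the three isomorphism types of a prime field, namely $\mathbb{F}_2$, the odd finite fields $\mathbb{F}_p$, and $\mathbb{Q}$, assembling the results already established in this section. The common thread is the following consequence of \Cref{dist_lemma}: writing $P_t$ for the point of $C((m_1,m_2),r)_{\mathbb{F}}$ parametrized by $t$, the squared distance $D^2(P_{t_1},P_{t_2})=\frac{4r^2(t_1-t_2)^2}{(t_1^2+1)(t_2^2+1)}$ has numerator factor $4r^2(t_1-t_2)^2$ which is already a square in the prime field $\mathbb{F}$ (as $r\in\mathbb{F}$), so $P_{t_1}$ and $P_{t_2}$ have r.s.d.\ if and only if $(t_1^2+1)(t_2^2+1)\in\square_{\mathbb{F}^*}$, i.e.\ exactly when $t_1^2+1$ and $t_2^2+1$ lie in the same coset of $\square_{\mathbb{F}^*}$ in $\mathbb{F}^*$; likewise $P_t$ has r.s.d.\ to $(m_1,m_2+r)$ precisely when $t^2+1\in\square_{\mathbb{F}^*}$. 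Hence the e-maximal circular point sets are the fibres of the map sending a point to the coset of $t^2+1$ (with $(m_1,m_2+r)$ assigned to the trivial coset), so their number equals the number of cosets of $\square_{\mathbb{F}^*}$ realized in this way, and by the uniformity property (\Cref{lemma_uniform_prop}) these fibres share a common cardinality.

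First I would dispose of $\chr(\mathbb{F})=2$, where $\mathbb{F}=\mathbb{F}_2$: by \Cref{dist_lemma} every squared distance on the circle vanishes and is therefore a square, so the whole circle is a single e-maximal (hence c-maximal) set of cardinality $|\mathbb{F}|$ by \Cref{coro_parametrisation}, and since $\mathbb{F}^*=\square_{\mathbb{F}^*}=\{1\}$ the index is $1$, matching the first line of the table. Next, for $\chr(\mathbb{F})\neq 2$ with $\mathbb{F}$ finite, i.e.\ $\mathbb{F}=\mathbb{F}_p$ with $p$ odd, \Cref{prop_fin_prim_field} already yields exactly two disjoint e-maximal circular point sets of equal cardinality $\frac{|\mathbb{F}|-1}{2}$ when $\sqrt{-1}\in\mathbb{F}$ and $\frac{|\mathbb{F}|+1}{2}$ otherwise; since the squares form a subgroup of index $2$ in the multiplicative group of a finite field of odd order, $[\mathbb{F}^*:\square_{\mathbb{F}^*}]=2$, matching the middle two lines.

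The remaining and most delicate case is $\chr(\mathbb{F})=0$, where $\mathbb{F}=\mathbb{Q}$ and $\sqrt{-1}\notin\mathbb{Q}$, so $|\mathbb{Q}|=\infty$ places us in the last line with infinite cardinality. \Cref{prop_infin_prim_field} shows the e-maximal sets are disjoint and of countably infinite cardinality, and \Cref{lemma_cosets_of_Q} gives $[\mathbb{Q}^*:\square_{\mathbb{Q}^*}]=\infty$ (countable). The point requiring care is to match the number of e-maximal sets with this index. By the correspondence above this number is at most the number of cosets, hence at most countable; the main obstacle is to see it is in fact infinite, i.e.\ that infinitely many cosets are realized as the coset of $t^2+1$. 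I would argue this by showing the squarefree part of $n^2+1$ ($n\in\mathbb{N}$) takes infinitely many values: for a fixed coset $c$ the condition $n^2+1\in c\,\square_{\mathbb{Q}^*}$ amounts to a negative Pell equation $n^2-cm^2=-1$ whose integer solutions $n$ are exponentially sparse, so no finite collection of cosets can absorb all of $\{n^2+1:n\in\mathbb{N}\}$.

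Thus in the rational case both the count of e-maximal sets and the index are countably infinite, and combining the three cases via \Cref{dist_lemma}, \Cref{prop_fin_prim_field}, \Cref{lemma_cosets_of_Q} and \Cref{prop_infin_prim_field} reproduces every line of the table and completes the proof. I expect the only genuinely non-routine step to be the infinitude-of-realized-cosets argument in the $\mathbb{Q}$ case; everything else is bookkeeping over results already in hand.
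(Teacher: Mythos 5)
Your proposal is correct and follows essentially the same route as the paper: there, \Cref{theorem_prime_field} is obtained as a summary of the section, by precisely the case split you use --- \Cref{dist_lemma} together with \Cref{coro_parametrisation} for $ \chr \left( \mathbb{F} \right) = 2 $, \Cref{prop_fin_prim_field} for odd finite prime fields (where $ \left[ \mathbb{F}^{*} : \square_{\mathbb{F}^{*}} \right] = 2 $), and \Cref{prop_infin_prim_field} combined with \Cref{lemma_cosets_of_Q} for $ \mathbb{Q} $. The one place where you go beyond the paper is the step you flagged yourself: \Cref{prop_infin_prim_field} proves that the e-maximal sets over $ \mathbb{Q} $ are disjoint and that \emph{each} is countably infinite, but it never explicitly verifies that the \emph{number} of such sets --- equivalently, the number of cosets of $ \square_{\mathbb{Q}^{*}} $ actually realized by $ t^2 + 1 $ for $ t \in \mathbb{Q} $ --- is infinite; the paper simply invokes \Cref{lemma_cosets_of_Q} for the index. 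Your Pell-equation argument closes this gap correctly: reducing to integer parameters, for a fixed squarefree $ c $ the condition $ n^2 + 1 \in c \, \square_{\mathbb{Q}^{*}} $ forces $ n^2 - c m^2 = -1 $ with $ m \in \mathbb{Z} $, whose solutions (if any) grow exponentially, so finitely many cosets would cover only $ O ( \log N ) $ of the values $ n \leq N $, a contradiction. A lighter route to the same conclusion is available: writing $ t = a/b $ in lowest terms, $ t^2 + 1 = ( a^2 + b^2 ) / b^2 $ lies in the coset of $ a^2 + b^2 $, and by Fermat's two-squares theorem every prime $ p \equiv 1 \pmod{4} $ is $ a^2 + b^2 $ with $ a, b $ coprime, so the infinitely many distinct cosets $ p \, \square_{\mathbb{Q}^{*}} $ are all realized. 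Note also, as your phrasing correctly respects, that not every coset occurs (e.g.\ $ 3 \, \square_{\mathbb{Q}^{*}} $ never arises, since $ 3 \nmid a^2 + b^2 $ for coprime $ a, b $), so the theorem's count of exactly $ \left[ \mathbb{F}^{*} : \square_{\mathbb{F}^{*}} \right] $ sets must be read as an equality of cardinals in the rational case --- which is exactly what your argument delivers.
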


Note that all e-maximal circular point sets over a prime field plane have the same cardinality and define a partition on the point of the underlying circle. Hence, over prime field planes there is no difference between e- and c-maximal circular point sets. However, we will see that this is not true if we work with circles over arbitrary field planes.
\end{subsection}



\begin{subsection}{Perfect distances}

In the last section we have seen that the points of circles over finite fields can be divided into equivalence classes by defining a relation  “has rational distance to", i.e. for this relation, points on a circle are related to each other if and only if the squared distance between them is rational. However, the next example will show that for circles over general fields this is clearly not the case. We will also assume that our field has a characteristic different from $ 2 $ in this section.

\begin{example} \label{ex_F7_extended}
	We would like to find the e-maximal circular point sets containing an arbitrary point of $ C \left((0,0),1 \right)_{\mathbb{F}_{49}} $. In order to do so, we identify the field $ \mathbb{F}_{49} $ with the isomorphic polynomial ring $ \mathbb{F}_{7}[a] $ where $ a $ is a root of the irreducible polynomial $ x^2 + 1 $ over $ \mathbb{F}_{7} $. As fixed point we choose $ \left( a+4,5a+2 \right) $. Since $ \boldsymbol{P} \left( \mathbb{F}_{49} \right) = \mathbb{F}_{7} $, the squared distances between points of the above circle are rational if and only if their values are contained in $ \square_{\mathbb{F}_{7}^{*}} = \{ 1,2,4 \} $. By calculating the squared distances between the points as in \Cref{existence_perf_dist} we see that there are exactly three e-maximal circular point sets containing our fixed point: All the points connected by solid blue lines and the two pairs of points connected by dashed blue lines. Moreover, only the four points connected with dark blue lines define a c-maximal circular point set containing our fixed point. By the uniformity property the same would also hold true for any other point in $ C \left((0,0),1 \right)_{\mathbb{F}_{49}} $.
\end{example}  

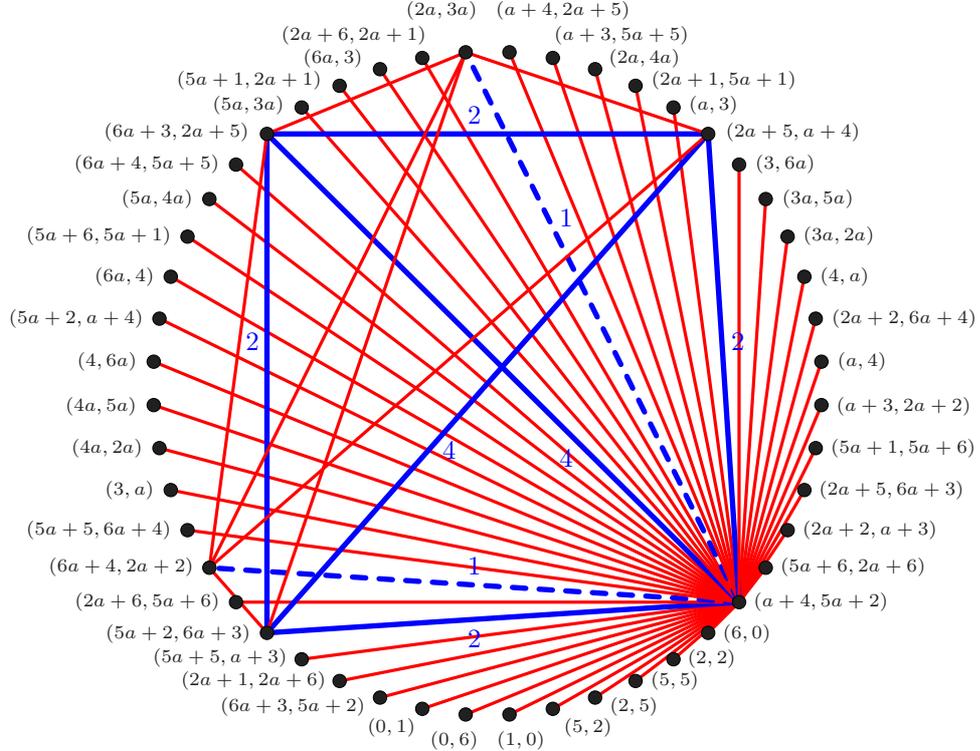
\begin{figure}[h]  
\begin{center}
\pagestyle{empty}

\definecolor{qqzzff}{rgb}{0.,0.6,1.}
\definecolor{qqqqff}{rgb}{0.,0.,1.}
\definecolor{ffqqqq}{rgb}{1.,0.,0.}
\definecolor{sqsqsq}{rgb}{0.12549019607843137,0.12549019607843137,0.12549019607843137}
\begin{tikzpicture}[line cap=round,line join=round,>=triangle 45,x=1.1cm,y=1.1cm]
\clip(-6,-5) rectangle (6,5);
\draw [line width=1.2pt,color=ffqqqq] (3.007359229915902,-2.6373832604002847)-- (2.6373832604002634,-3.00735922991592);
\draw [line width=1.2pt,color=ffqqqq] (3.007359229915902,-2.6373832604002847)-- (3.325878449210176,-2.222280932078416);
\draw [line width=1.2pt,color=ffqqqq] (3.007359229915902,-2.6373832604002847)-- (3.587490966130751,-1.7691547608760103);
\draw [line width=1.2pt,color=ffqqqq] (3.007359229915902,-2.6373832604002847)-- (3.7877205179804214,-1.2857578612126495);
\draw [line width=1.2pt,color=ffqqqq] (3.007359229915902,-2.6373832604002847)-- (3.9231411216129213,-0.7803612880645155);
\draw [line width=1.2pt,color=ffqqqq] (3.007359229915902,-2.6373832604002847)-- (3.991435692954414,-0.26161251692057397);
\draw [line width=1.2pt,color=ffqqqq] (3.007359229915902,-2.6373832604002847)-- (3.991435692954415,0.26161251692057225);
\draw [line width=1.2pt,color=ffqqqq] (3.007359229915902,-2.6373832604002847)-- (3.9231411216129213,0.7803612880645147);
\draw [line width=1.2pt,color=ffqqqq] (3.007359229915902,-2.6373832604002847)-- (3.7877205179804214,1.285757861212651);
\draw [line width=1.2pt,color=ffqqqq] (3.007359229915902,-2.6373832604002847)-- (3.587490966130752,1.7691547608760065);
\draw [line width=1.2pt,color=ffqqqq] (3.007359229915902,-2.6373832604002847)-- (3.3258784492101827,2.222280932078407);
\draw [line width=1.2pt,color=ffqqqq] (3.007359229915902,-2.6373832604002847)-- (2.2222809320783883,-3.3258784492101947);
\draw [line width=1.2pt,color=ffqqqq] (3.007359229915902,-2.6373832604002847)-- (1.769154760875971,-3.58749096613077);
\draw [line width=1.2pt,color=ffqqqq] (3.007359229915902,-2.6373832604002847)-- (1.2857578612125975,-3.787720517980439);
\draw [line width=1.2pt,color=ffqqqq] (3.007359229915902,-2.6373832604002847)-- (0.780361288064448,-3.9231411216129346);
\draw [line width=1.2pt,color=ffqqqq] (3.007359229915902,-2.6373832604002847)-- (0.26161251692049253,-3.9914356929544197);
\draw [line width=1.2pt,color=ffqqqq] (3.007359229915902,-2.6373832604002847)-- (-0.2616125169206878,-3.9914356929544064);
\draw [line width=1.2pt,color=ffqqqq] (3.007359229915902,-2.6373832604002847)-- (-0.7803612880646181,-3.923141121612901);
\draw [line width=1.2pt,color=ffqqqq] (3.007359229915902,-2.6373832604002847)-- (-1.2857578612127412,-3.7877205179803903);
\draw [line width=1.2pt,color=ffqqqq] (3.007359229915902,-2.6373832604002847)-- (-1.769154760876088,-3.587490966130712);
\draw [line width=1.2pt,color=ffqqqq] (3.007359229915902,-2.6373832604002847)-- (-2.2222809320784793,-3.3258784492101334);
\draw [line width=2.pt,color=qqqqff] (3.007359229915902,-2.6373832604002847)-- (-2.637383260400333,-3.0073592299158594);
\draw [line width=1.2pt,color=ffqqqq] (3.007359229915902,-2.6373832604002847)-- (-3.0073592299159535,-2.6373832604002247);
\draw [line width=2.pt, dash pattern=on 5pt off 5pt, color=qqqqff] (3.007359229915902,-2.6373832604002847)-- (-3.3258784492102134,-2.222280932078361);
\draw [line width=1.2pt,color=ffqqqq] (3.007359229915902,-2.6373832604002847)-- (-3.5874909661307752,-1.7691547608759592);
\draw [line width=1.2pt,color=ffqqqq] (3.007359229915902,-2.6373832604002847)-- (-3.7877205179804374,-1.2857578612126033);
\draw [line width=1.2pt,color=ffqqqq] (3.007359229915902,-2.6373832604002847)-- (-3.9231411216129297,-0.7803612880644725);
\draw [line width=1.2pt,color=ffqqqq] (3.007359229915902,-2.6373832604002847)-- (-3.9914356929544166,-0.26161251692053566);
\draw [line width=1.2pt,color=ffqqqq] (3.007359229915902,-2.6373832604002847)-- (-3.991435692954412,0.26161251692060467);
\draw [line width=1.2pt,color=ffqqqq] (3.007359229915902,-2.6373832604002847)-- (-3.923141121612916,0.7803612880645409);
\draw [line width=1.2pt,color=ffqqqq] (3.007359229915902,-2.6373832604002847)-- (-3.7877205179804148,1.2857578612126697);
\draw [line width=1.2pt,color=ffqqqq] (3.007359229915902,-2.6373832604002847)-- (-3.587490966130744,1.769154760876023);
\draw [line width=1.2pt,color=ffqqqq] (3.007359229915902,-2.6373832604002847)-- (-3.3258784492101725,2.2222809320784216);
\draw [line width=1.2pt,color=ffqqqq] (3.007359229915902,-2.6373832604002847)-- (-3.007359229915903,2.6373832604002816);
\draw [line width=2.pt,color=qqqqff] (3.007359229915902,-2.6373832604002847)-- (-2.637383260400274,3.0073592299159095);
\draw [line width=1.2pt,color=ffqqqq] (3.007359229915902,-2.6373832604002847)-- (-2.2222809320784105,3.3258784492101796);
\draw [line width=1.2pt,color=ffqqqq] (3.007359229915902,-2.6373832604002847)-- (-1.7691547608760072,3.587490966130752);
\draw [line width=1.2pt,color=ffqqqq] (3.007359229915902,-2.6373832604002847)-- (-1.2857578612126472,3.7877205179804223);
\draw [line width=1.2pt,color=ffqqqq] (3.007359229915902,-2.6373832604002847)-- (-0.7803612880645124,3.9231411216129217);
\draw [line width=2.pt, dash pattern=on 5pt off 5pt, color=qqqqff] (3.007359229915902,-2.6373832604002847)-- (-0.26161251692057075,3.991435692954414);
\draw [line width=1.2pt,color=ffqqqq] (3.007359229915902,-2.6373832604002847)-- (0.26161251692057513,3.9914356929544135);
\draw [line width=1.2pt,color=ffqqqq] (3.007359229915902,-2.6373832604002847)-- (0.7803612880645163,3.923141121612921);
\draw [line width=1.2pt,color=ffqqqq] (3.007359229915902,-2.6373832604002847)-- (1.2857578612126495,3.7877205179804214);
\draw [line width=1.2pt,color=ffqqqq] (3.007359229915902,-2.6373832604002847)-- (1.769154760876009,3.5874909661307517);
\draw [line width=1.2pt,color=ffqqqq] (3.007359229915902,-2.6373832604002847)-- (2.2222809320784163,3.325878449210176);
\draw [line width=2.pt,color=qqqqff] (3.007359229915902,-2.6373832604002847)-- (2.6373832604002816,3.007359229915904);
\draw [line width=1.2pt,color=ffqqqq] (3.007359229915902,-2.6373832604002847)-- (3.0073592299159144,2.63738326040027);
\draw [line width=2.pt,color=qqqqff] (-2.637383260400274,3.0073592299159095)-- (2.6373832604002816,3.007359229915904);
\draw [line width=2.pt,color=qqqqff] (-2.637383260400333,-3.0073592299158594)-- (-2.637383260400274,3.0073592299159095);
\draw [line width=2.pt,color=qqqqff] (-2.637383260400333,-3.0073592299158594)-- (2.6373832604002816,3.007359229915904);
\draw [line width=1.2pt,color=ffqqqq] (-0.26161251692057075,3.991435692954414)-- (-3.3258784492102134,-2.222280932078361);
\draw [line width=1.2pt,color=ffqqqq] (-0.26161251692057075,3.991435692954414)-- (2.6373832604002816,3.007359229915904);
\draw [line width=1.2pt,color=ffqqqq] (-0.26161251692057075,3.991435692954414)-- (-2.637383260400274,3.0073592299159095);
\draw [line width=1.2pt,color=ffqqqq] (-0.26161251692057075,3.991435692954414)-- (-2.637383260400333,-3.0073592299158594);
\draw [line width=1.2pt,color=ffqqqq] (-3.3258784492102134,-2.222280932078361)-- (-2.637383260400333,-3.0073592299158594);
\draw [line width=1.2pt,color=ffqqqq] (-3.3258784492102134,-2.222280932078361)-- (-2.637383260400274,3.0073592299159095);
\draw [line width=1.2pt,color=ffqqqq] (-3.3258784492102134,-2.222280932078361)-- (2.6373832604002816,3.007359229915904);
\begin{scriptsize}
\draw [fill=sqsqsq] (3.991435692954415,0.26161251692057225) circle (2.5pt);
\draw[color=sqsqsq] (4.475,0.27) node {$ \left( a,4 \right) $};
\draw [fill=sqsqsq] (3.991435692954414,-0.26161251692057397) circle (2.5pt);
\draw[color=sqsqsq] (4.975,-0.275) node {$ \left( a + 3,2a + 2 \right) $};
\draw [fill=sqsqsq] (3.9231411216129213,0.7803612880645147) circle (2.5pt);
\draw[color=sqsqsq] (4.98,0.78) node {$ \left( 2a + 2,6a + 4 \right) $};
\draw [fill=sqsqsq] (3.9231411216129213,-0.7803612880645155) circle (2.5pt);
\draw[color=sqsqsq] (4.97,-0.775) node {$ \left( 5a + 1,5a + 6 \right) $};
\draw [fill=sqsqsq] (3.7877205179804214,1.285757861212651) circle (2.5pt);
\draw[color=sqsqsq] (4.27,1.285) node {$ \left( 4,a \right) $};
\draw [fill=sqsqsq] (3.7877205179804214,-1.2857578612126495) circle (2.5pt);
\draw[color=sqsqsq] (4.83,-1.29) node {$ \left( 2a + 5,6a + 3 \right) $};
\draw [fill=sqsqsq] (3.587490966130752,1.7691547608760065) circle (2.5pt);
\draw[color=sqsqsq] (4.2,1.775) node {$ \left( 3a,2a \right) $};
\draw [fill=sqsqsq] (3.587490966130751,-1.7691547608760103) circle (2.5pt);
\draw[color=sqsqsq] (4.57,-1.765) node {$ \left( 2a + 2,a + 3 \right) $};
\draw [fill=sqsqsq] (3.325878449210176,-2.222280932078416) circle (2.5pt);
\draw[color=sqsqsq] (4.37,-2.21) node {$ \left( 5a + 6,2a + 6 \right) $};
\draw [fill=sqsqsq] (3.0073592299159144,2.63738326040027) circle (2.5pt);
\draw[color=sqsqsq] (3.56,2.65) node {$ \left( 3,6a \right) $};
\draw [fill=sqsqsq] (3.007359229915902,-2.6373832604002847) circle (2.5pt);
\draw[color=sqsqsq] (3.985,-2.63) node {$ \left( a + 4,5a + 2 \right) $};
\draw [fill=sqsqsq] (3.3258784492101827,2.222280932078407) circle (2.5pt);
\draw[color=sqsqsq] (3.95,2.23) node {$ \left( 3a,5a \right) $};
\draw [fill=sqsqsq] (2.6373832604002816,3.007359229915904) circle (2.5pt);
\draw[color=sqsqsq] (3.65,3.025) node {$ \left( 2a + 5,a + 4 \right) $};
\draw [fill=sqsqsq] (2.6373832604002634,-3.00735922991592) circle (2.5pt);
\draw[color=sqsqsq] (3.1,-3.015) node {$ \left( 6,0 \right) $};
\draw [fill=sqsqsq] (2.2222809320784163,3.325878449210176) circle (2.5pt);
\draw[color=sqsqsq] (2.7,3.35) node {$ \left( a,3 \right) $};
\draw [fill=sqsqsq] (2.2222809320783883,-3.3258784492101947) circle (2.5pt);
\draw[color=sqsqsq] (2.68,-3.32) node {$ \left( 2,2 \right) $};
\draw [fill=sqsqsq] (1.769154760876009,3.5874909661307517) circle (2.5pt);
\draw[color=sqsqsq] (2.825,3.65) node {$ \left( 2a + 1,5a + 1 \right) $};
\draw [fill=sqsqsq] (1.769154760875971,-3.58749096613077) circle (2.5pt);
\draw[color=sqsqsq] (2.24,-3.58) node {$ \left( 5,5 \right) $};
\draw [fill=sqsqsq] (1.2857578612126495,3.7877205179804214) circle (2.5pt);
\draw[color=sqsqsq] (1.875,3.925) node {$ \left( 2a,4a \right) $};
\draw [fill=sqsqsq] (0.7803612880645163,3.923141121612921) circle (2.5pt);
\draw[color=sqsqsq] (1.6,4.2) node {$ \left( a + 3,5a + 5 \right) $};
\draw [fill=sqsqsq] (0.26161251692057513,3.9914356929544135) circle (2.5pt);
\draw[color=sqsqsq] (0.9,4.5) node {$ \left( a + 4,2a + 5 \right) $};
\draw [fill=sqsqsq] (-0.26161251692057075,3.991435692954414) circle (2.5pt);
\draw[color=sqsqsq] (-0.55,4.5) node {$ \left( 2a,3a \right) $};
\draw [fill=sqsqsq] (1.2857578612125975,-3.787720517980439) circle (2.5pt);
\draw[color=sqsqsq] (1.75,-3.9) node {$ \left( 2,5 \right) $};
\draw [fill=sqsqsq] (0.780361288064448,-3.9231411216129346) circle (2.5pt);
\draw[color=sqsqsq] (1.2,-4.15) node {$ \left( 5,2 \right) $};
\draw [fill=sqsqsq] (0.26161251692049253,-3.9914356929544197) circle (2.5pt);
\draw[color=sqsqsq] (0.4,-4.3) node {$ \left( 1,0 \right) $};
\draw [fill=sqsqsq] (-0.7803612880645124,3.9231411216129217) circle (2.5pt);
\draw[color=sqsqsq] (-1.6,4.2) node {$ \left( 2a + 6,2a + 1 \right) $};
\draw [fill=sqsqsq] (-1.2857578612126472,3.7877205179804223) circle (2.5pt);
\draw[color=sqsqsq] (-1.85,3.925) node {$ \left( 6a,3 \right) $};
\draw [fill=sqsqsq] (-1.7691547608760072,3.587490966130752) circle (2.5pt);
\draw[color=sqsqsq] (-2.85,3.65) node {$ \left( 5a + 1,2a + 1 \right) $};
\draw [fill=sqsqsq] (-2.2222809320784105,3.3258784492101796) circle (2.5pt);
\draw[color=sqsqsq] (-2.86,3.35) node {$ \left( 5a,3a \right) $};
\draw [fill=sqsqsq] (-2.637383260400274,3.0073592299159095) circle (2.5pt);
\draw[color=sqsqsq] (-3.72,3.025) node {$ \left( 6a + 3,2a + 5 \right) $};
\draw [fill=sqsqsq] (-3.007359229915903,2.6373832604002816) circle (2.5pt);
\draw[color=sqsqsq] (-4.075,2.65) node {$ \left( 6a + 4,5a + 5 \right) $};
\draw [fill=sqsqsq] (-3.3258784492101725,2.2222809320784216) circle (2.5pt);
\draw[color=sqsqsq] (-3.95,2.23) node {$ \left( 5a,4a \right) $};
\draw [fill=sqsqsq] (-3.587490966130744,1.769154760876023) circle (2.5pt);
\draw[color=sqsqsq] (-4.65,1.775) node {$ \left( 5a + 6,5a + 1 \right) $};
\draw [fill=sqsqsq] (-3.7877205179804148,1.2857578612126697) circle (2.5pt);
\draw[color=sqsqsq] (-4.34,1.285) node {$ \left( 6a,4 \right) $};
\draw [fill=sqsqsq] (-3.923141121612916,0.7803612880645409) circle (2.5pt);
\draw[color=sqsqsq] (-4.92,0.78) node {$ \left( 5a + 2,a + 4 \right) $};
\draw [fill=sqsqsq] (-3.991435692954412,0.26161251692060467) circle (2.5pt);
\draw[color=sqsqsq] (-4.55,0.27) node {$ \left( 4,6a \right) $};
\draw [fill=sqsqsq] (-3.9914356929544166,-0.26161251692053566) circle (2.5pt);
\draw[color=sqsqsq] (-4.62,-0.275) node {$ \left( 4a,5a \right) $};
\draw [fill=sqsqsq] (-3.9231411216129297,-0.7803612880644725) circle (2.5pt);
\draw[color=sqsqsq] (-4.55,-0.775) node {$ \left( 4a,2a \right) $};
\draw [fill=sqsqsq] (-3.7877205179804374,-1.2857578612126033) circle (2.5pt);
\draw[color=sqsqsq] (-4.28,-1.29) node {$ \left( 3,a \right) $};
\draw [fill=sqsqsq] (-3.5874909661307752,-1.7691547608759592) circle (2.5pt);
\draw[color=sqsqsq] (-4.65,-1.765) node {$ \left( 5a + 5,6a + 4 \right) $};
\draw [fill=sqsqsq] (-3.3258784492102134,-2.222280932078361) circle (2.5pt);
\draw[color=sqsqsq] (-4.38,-2.21) node {$ \left( 6a + 4,2a + 2 \right) $};
\draw [fill=sqsqsq] (-3.0073592299159535,-2.6373832604002247) circle (2.5pt);
\draw[color=sqsqsq] (-4.07,-2.63) node {$ \left( 2a + 6,5a + 6 \right) $};
\draw [fill=sqsqsq] (-2.637383260400333,-3.0073592299158594) circle (2.5pt);
\draw[color=sqsqsq] (-3.7,-3.015) node {$ \left( 5a + 2,6a + 3 \right) $};
\draw [fill=sqsqsq] (-2.2222809320784793,-3.3258784492101334) circle (2.5pt);
\draw[color=sqsqsq] (-3.2,-3.3) node {$ \left( 5a + 5,a + 3 \right) $};
\draw [fill=sqsqsq] (-1.769154760876088,-3.587490966130712) circle (2.5pt);
\draw[color=sqsqsq] (-2.8,-3.58) node {$ \left( 2a + 1,2a + 6 \right) $};
\draw [fill=sqsqsq] (-1.2857578612127412,-3.7877205179803903) circle (2.5pt);
\draw[color=sqsqsq] (-2.33,-3.9) node {$ \left( 6a + 3,5a + 2 \right) $};
\draw [fill=sqsqsq] (-0.7803612880646181,-3.923141121612901) circle (2.5pt);
\draw[color=sqsqsq] (-1.15,-4.15) node {$ \left( 0,1 \right) $};
\draw [fill=sqsqsq] (-0.2616125169206878,-3.9914356929544064) circle (2.5pt);
\draw[color=sqsqsq] (-0.4,-4.3) node {$ \left( 0,6 \right) $};

\draw[color=qqqqff] (-0.2,-3.075) node {\fontsize{10}{0} $2$};
\draw[color=qqqqff] (-0.2,-2.2) node {\fontsize{10}{0} $1$};
\draw[color=qqqqff] (-0.5,-0.8) node {\fontsize{10}{0} $ 4 $};
\draw[color=qqqqff] (0.9,2.0) node {\fontsize{10}{0} $1$};
\draw[color=qqqqff] (0.9,-0.9) node {\fontsize{10}{0} $ 4 $};

\draw[color=qqqqff] (-2.85,0.5) node {\fontsize{10}{0} $ 2 $};
\draw[color=qqqqff] (2.95,0.5) node {\fontsize{10}{0} $ 2 $};
\draw[color=qqqqff] (-0.2,3.23) node {\fontsize{10}{0} $ 2 $};

\end{scriptsize}
\end{tikzpicture}

\caption{The points of $ C \left((0,0),1 \right)_{\mathbb{F}_{49}} $ and the squared distances between them and the point $ \left( a + 4,5a + 2 \right) $ marked by red lines for non-rational and blue lines for rational squared distances} 
\label{existence_perf_dist}
\end{center}
\end{figure}

Observe that the circular point sets in \Cref{ex_F7} and \Cref{ex_F7_extended} given by the dark blue lines (where the points have the same squared distances from each other) have maximal cardinality although the circle $ C \left((0,0),1 \right)_{\mathbb{F}_{49}} $ contains more points than $ C \left((0,0),1 \right)_{\mathbb{F}_{7}} $. Hence, it seems that circles over field extensions do not have bigger c-maximal circular point sets than circles over the underlying prime field plane. We will prove this fact later. 


\begin{definition}
	A rational squared distance between two points of a circle $ C \left(M,r \right)_{\mathbb{F}} $ is called {\em perfect with respect to the underlying circle} if there exists a third point on the same circle such that all the squared distances between these three different points are rational.
\end{definition}
 
In other words a squared distance is called perfect if it is the side of a rational triangle, i.e. all the mutual squared distances of this triangle are rational.
 
Note that the last definition makes sure that a perfect distance is never equal to zero.
In \Cref{ex_F7_extended} the squared distances between the points connected by the dark blue lines are perfect, whereas the same does not hold true for the dashed lines. Indeed, for all pairs of points connected by a light blue line there is no further point on the same circle such that all the squared distances between three of these points are rational. To construct the c-maximal circular point set containing the point $ \left( a + 4,5a + 2 \right) $ it also seems to be enough to know the squared distances $ 2,4 $ and to construct all points having such a squared distance form a starting point on the circle. Thus, these numbers have a special property which will be named in the following. 

\begin{definition}
Consider a circular point set which contains at least three points. A rational squared distance between two different points of this circle 
has the {\em extension property with respect to the underlying circle} if any point on this circle having this squared distance to one point of the circular point set implies that this point has r.s.d. to all points of this circular point set.
\end{definition}

Note that the last two definitions are purely geometric and the extension property of a squared distance implies that it is also perfect. However, it is not obvious that the inverse also holds. We will need the following proposition and lemma which will also bring an algebraic property of perfect distances into play:

\begin{proposition} \label{prop_algebraic_con}
	Let $ q $ be a perfect distance with respect to $ C \left(r,M \right)_{\mathbb{F}} $. Then the following is satisfied: 
\begin{equation*}
q \in \square_{\boldsymbol{P} \left( \mathbb{F} \right)}  \quad \textrm{and} \quad 1 - \tfrac{q}{4r^2} \in \square_{\boldsymbol{P} \left( \mathbb{F} \right)}  
\tag{*}
\end{equation*}
\end{proposition}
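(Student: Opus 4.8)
The first claim, $q \in \square_{\boldsymbol{P}\left(\mathbb{F}\right)}$, is immediate: by definition a perfect distance is in particular a rational squared distance, and ``rational'' means precisely membership in $\square_{\boldsymbol{P}\left(\mathbb{F}\right)}$. So the whole content lies in the second condition. Throughout I assume $\chr\left(\mathbb{F}\right)\neq 2$, abbreviate $K \coloneqq \boldsymbol{P}\left(\mathbb{F}\right)$, and work in the group $V \coloneqq K^{*}/\square_{K^{*}}$, which is an $\mathbb{F}_2$-vector space since $[a]^2 = [a^2] = 0$ for every $a \in K^{*}$; I write $[x] \in V$ for the square-class of $x \in K^{*}$.

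First I would reduce the geometry. By \Cref{squared_distance_invariant} squared distances are translation-invariant, so I may assume $M=(0,0)$; I deliberately do \emph{not} rescale $r$, since dividing by $r \notin K$ could destroy rationality. Let $q = D^2(P_1,P_2)$ be the perfect distance, $P_3$ a witnessing third point, and write $q_{ij} = D^2(P_i,P_j)$, so that $q_{12},q_{13},q_{23}\in\square_{K}$ and all are nonzero by \Cref{coro_no_vanish}. Using that the rotation group acts transitively on the circle (the argument of \Cref{lemma_uniform_prop}) and preserves squared distances, I rotate so that $P_1$ becomes the point of parameter $t_1=0$, i.e. $P_1=(0,-r)$. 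If $P_2$ is the antipode $(0,r)$ then $q=4r^2$ and $1-\tfrac{q}{4r^2}=0\in\square_{K}$; so assume $P_2$ has finite parameter $t_2\neq 0$ with $t_2^2\neq -1$ (the sub-case where the witness $P_3$ is the antipode is treated at the end). Then \Cref{dist_lemma} gives $q=\tfrac{4r^2 t_2^2}{t_2^2+1}$ together with the target identity $1-\tfrac{q}{4r^2}=\tfrac{1}{t_2^2+1}$.

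The technical heart is to push everything into $K$. I first claim $r^2\in K$: the three distinct points lie on the nondegenerate conic $x^2+y^2=r^2$, hence (a line meets it in at most two points) are not collinear, so the Cayley--Menger circumradius identity
\[
r^2\,\bigl(2(q_{12}q_{13}+q_{13}q_{23}+q_{12}q_{23})-q_{12}^2-q_{13}^2-q_{23}^2\bigr)=q_{12}q_{13}q_{23},
\]
a polynomial identity valid over any field of characteristic $\neq 2$, has nonzero bracket and exhibits $r^2$ as a quotient of elements of $K$. Feeding $r^2\in K$ back into $1-\tfrac{q_{1j}}{4r^2}=\tfrac{1}{t_j^2+1}$ shows $t_2^2,t_3^2\in K$, and then $(t_2-t_3)^2=\tfrac{q_{23}(t_2^2+1)(t_3^2+1)}{4r^2}\in K$ forces $t_2t_3\in K$. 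Hence $\lambda\coloneqq t_3/t_2\in K$, and writing $\sigma\coloneqq t_2^2\in K$, $\rho\coloneqq r^2\in K$ we get, via \Cref{dist_lemma}, the expressions $q_{12}=\tfrac{4\rho\sigma}{\sigma+1}$, $q_{13}=\tfrac{4\rho\sigma\lambda^2}{\lambda^2\sigma+1}$ and $q_{23}=\tfrac{4\rho\sigma(1-\lambda)^2}{(\sigma+1)(\lambda^2\sigma+1)}$ as products and quotients of nonzero elements of $K$.

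Finally I would compute square-classes. Since $\lambda\in K$ the factors $\lambda^2$, $(1-\lambda)^2$ and $4$ lie in $\square_{K^{*}}$, so reducing $[q_{12}]=[q_{13}]=[q_{23}]=0$ into $V$ yields the three $\mathbb{F}_2$-linear relations $[\rho]+[\sigma]=[\sigma+1]$, $[\rho]+[\sigma]=[\lambda^2\sigma+1]$ and $[\rho]+[\sigma]=[\sigma+1]+[\lambda^2\sigma+1]$. Substituting the first two into the third gives $[\rho]+[\sigma]=0$, whence the first relation forces $[\sigma+1]=0$, that is $t_2^2+1\in\square_{K^{*}}$; consequently $1-\tfrac{q}{4r^2}=\tfrac{1}{t_2^2+1}\in\square_{\boldsymbol{P}\left(\mathbb{F}\right)}$. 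I expect the main obstacle to be exactly this passage: a quantity like $t_2^2+1$ is automatically a square in $\mathbb{F}$ but need not be a square in $K$, so the proof cannot rely on any field-theoretic triviality; one must first drag $r^2$ and the parameters into the prime field (so that the classes are even defined) and then recognize that the three apparently independent rationality conditions collapse, through the $\mathbb{F}_2$-linear algebra of $K^{*}/\square_{K^{*}}$, to the single statement $[t_2^2+1]=0$. The special-point sub-cases are handled by the same bookkeeping: there the relevant distance equals $4r^2$ or $\tfrac{4r^2}{t_2^2+1}$ by \Cref{dist_lemma}, and the class computation goes through verbatim with the degenerate term omitted.
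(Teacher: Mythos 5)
Your proof is correct, and it takes a genuinely different route from the paper's. The paper rotates the vertex \emph{opposite} the side $q$ to $(0,r)$, writes $q = \frac{q_1 q_2}{4r^2}\,(t_1-t_2)^2$ via \Cref{dist_lemma}, extracts $\frac{t_1-t_2}{2r} \in \boldsymbol{P}(\mathbb{F})$ from the multiplicative group structure of the nonzero squares, and then uses the factorization $(t_1+t_2)(t_1-t_2) = 4r^2\,\frac{q_2-q_1}{q_1 q_2}$ to get $\frac{t_i}{2r} \in \boldsymbol{P}(\mathbb{F})$, whence $1 - \frac{q_i}{4r^2} = q_i \cdot \frac{t_i^2}{4r^2} \in \square_{\boldsymbol{P}(\mathbb{F})}$. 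Notably, the paper never needs $r^2 \in \boldsymbol{P}(\mathbb{F})$ inside the proof --- the normalized quantities $t_i/(2r)$ land in the prime field even when $r$ does not --- and $r^2 \in \boldsymbol{P}(\mathbb{F})$ is deduced only afterwards, in \Cref{coro_radius}, as a corollary of the proposition. You instead anchor an \emph{endpoint} of $q$ at $(0,-r)$, first drag $r^2$ into the prime field via the circumradius Cayley--Menger identity (which is indeed valid here: it is a polynomial consequence of the rank-$\le 2$ Gram condition for three points on the circle, and the bracket equals $4\det(P_2-P_1,\,P_3-P_1)^2$ by the Brahmagupta--Fibonacci identity, nonzero because three distinct points of the irreducible conic $x^2+y^2=r^2$ are never collinear), then pull $t_2^2$, $t_3^2$, $t_2t_3$ and $\lambda = t_3/t_2$ into the prime field, and finish with $\mathbb{F}_2$-linear algebra in the square-class group; I checked the three relations and they do collapse to $[t_2^2+1]=0$ as you claim, and your two antipodal sub-cases go through. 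What your route buys: you obtain \Cref{coro_radius} for free and in advance, and your normalization proves the condition directly for $q$ itself --- whereas the paper's proof, as written, establishes $1-\frac{q_i}{4r^2} \in \square_{\boldsymbol{P}(\mathbb{F})}$ only for the two sides \emph{adjacent} to the normalized vertex, so concluding for $q$ requires the implicit relabeling that every side of the witness triangle is itself perfect. What the paper's route buys: brevity --- no circumradius identity, no square-class formalism, and no need for $r^2 \in \boldsymbol{P}(\mathbb{F})$ at all. One harmless gloss on your side: \Cref{lemma_uniform_prop} states transitivity for the unit circle, but the map $\theta^{Q/r} \circ (\theta^{P/r})^{-1}$ works verbatim on a circle of radius $r$, so your rotation step is fine.
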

	
\begin{proof}
	$ q \in \square_{\boldsymbol{P} \left( \mathbb{F} \right)} $ is satisfied because $ q $ is rational. Moreover, since $ q $ is perfect, we find three points in $ C \left(M,r \right)_{\mathbb{F}} $ such that all the squared distances between the points are rational and one of them is equal to $ q $. Because squared distances are invariant under translation and rotation, we can assume that $ M =\left( 0,0 \right) $ and the point opposite of the side with squared distance $ q $ is $ P_0 \coloneqq \left( 0,r \right) $. Assume that the other points $ P_1,P_2 $ are parametrized by $ t_1,t_2 \in \mathbb{F} $ and the squared distances between $ P_0 $ and $ P_i $ is $ q_i $ for $ i = 1,2 $. Applying \Cref{dist_lemma} we have
	\begin{align*}
		q_i = \frac{4r^2}{t_i^2 + 1} \in \square_{\boldsymbol{P} \left( \mathbb{F} \right)} & & \mathrm{and \ so} & & q = \dfrac{4r^2 \left( t_1 - t_2 \right)^2}{ \left( t_1^2 + 1 \right) \left( t_2^2 + 1\right)} = \frac{q_1 q_2}{4r^2} \left( t_1 - t_2 \right)^2 \in \square_{\boldsymbol{P} \left( \mathbb{F} \right)}.
	\end{align*}
Since $ \square_{\boldsymbol{P} \left( \mathbb{F} \right)^{*}} $ is a group with respect to multiplication and $ q_i \neq 0 $, we can deduce
	\begin{align*}
	\frac{1}{4r^2} \left( t_1 - t_2 \right)^2 = \left( \frac{1}{2r} \left( t_1 -t_2 \right) \right)^2 \in \square_{\boldsymbol{P} \left( \mathbb{F} \right)}.
	\end{align*}
Hence, we get	
	\begin{align*}
		\frac{1}{2r} \left( t_1 -t_2 \right) \in {\boldsymbol{P} \left( \mathbb{F} \right)}.
	\end{align*}
	
On the other hand, we have
	\begin{align*}
		\left( t_1 + t_2 \right) \left( t_1 - t_2 \right) = \left( t_1^2+1 \right) - \left(t_2^2+1 \right) = \frac{4r^2}{q_1} - \frac{4r^2}{q_2} = 4r^2 \frac{q_2 - q_1}{q_1 q_2}.
	\end{align*}
By the above results and the fact that the $ t_i $'s are different because they are parametrizing different points we get
	\begin{align*}
		\frac{1}{2r} \left( t_1 + t_2 \right) = \underbrace{\left( \frac{1}{2r} \left( t_1 -t_2 \right) \right)^{-1}}_{\in \boldsymbol{P} \left( \mathbb{F} \right)} \underbrace{ \frac{q_2 - q_1}{q_1 q_2}}_{\in \boldsymbol{P} \left( \mathbb{F} \right)} \in {\boldsymbol{P} \left( \mathbb{F} \right)}
	\end{align*}
so we can deduce $ \frac{t_i}{2r} \in \boldsymbol{P} \left( \mathbb{F} \right) $ and also $ \frac{t_i^2}{4r^2} \in \square_{\boldsymbol{P} \left( \mathbb{F} \right)} $. Thus, we conclude
	\begin{align*}
		1 - \frac{q_i}{4r^2} = \frac{q_i}{4r^2} \left( \frac{4r^2}{q_i} -1 \right) = \frac{q_i}{4r^2} \left( {t_i}^2 + 1 -1 \right) = \underbrace{q_i}_{\in \square_{\boldsymbol{P} \left( \mathbb{F} \right)}} \cdot \underbrace{\frac{t_i^2}{4r^2}}_{\in \square_{\boldsymbol{P} \left( \mathbb{F} \right)}} \in \square_{\boldsymbol{P} \left( \mathbb{F} \right)}
	\end{align*}
for $ i = 1,2 $.	
\end{proof}	

\begin{definition}	
We also say that a squared distance $ q $ satisfies the {\em algebraic circle property (short a.c.p.) with respect to $ C \left(M,r \right)_{\mathbb{F}} $} if $ q $ satisfies $ ({}^*) $. 
	\end{definition}

A direct consequence of \Cref{prop_algebraic_con} and a necessary condition for the existence of a circular point set with cardinality at least three on $ C \left(M,r \right)_{\mathbb{F}} $ is the following.

\begin{corollary} \label{coro_radius}
	If we have a circular point set contained in $ C \left(M,r \right)_{\mathbb{F}} $ with cardinality at least three, then $ r^2 \in \boldsymbol{P} \left( \mathbb{F} \right) $.
\end{corollary}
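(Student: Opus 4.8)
The plan is to extract a perfect distance from the given circular point set, feed it into \Cref{prop_algebraic_con}, and then read off $r^2 \in \boldsymbol{P}\left( \mathbb{F} \right)$ by elementary field arithmetic inside the prime field. All of the geometric content is already packaged in \Cref{prop_algebraic_con}; the corollary is essentially a manipulation of the two conditions in $({}^*)$.

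First I would pick any three distinct points of the circular point set, which exist since its cardinality is at least three. By definition of a rational point set, all three pairwise squared distances lie in $\square_{\boldsymbol{P}\left( \mathbb{F} \right)}$, so these points form a rational triangle and each of its sides is, by definition, a perfect distance. Fix one such side and denote its squared length by $q$. Since $\chr \left( \mathbb{F} \right) \neq 2$, \Cref{coro_no_vanish} guarantees that distinct points on the circle have non-vanishing squared distance, so $q \neq 0$.

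Next I would apply \Cref{prop_algebraic_con} to $q$, which yields the two conditions $q \in \square_{\boldsymbol{P}\left( \mathbb{F} \right)}$ and $1 - \tfrac{q}{4r^2} \in \square_{\boldsymbol{P}\left( \mathbb{F} \right)}$. The crucial point is that $\boldsymbol{P}\left( \mathbb{F} \right)$ is a field, hence closed under subtraction and under inversion of nonzero elements. Subtracting the second square from $1$ gives $\tfrac{q}{4r^2} = 1 - (1 - \tfrac{q}{4r^2}) \in \boldsymbol{P}\left( \mathbb{F} \right)$, and this element is nonzero because $q \neq 0$ and $r \in \mathbb{F}^{*}$. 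Consequently $4r^2 = q \cdot (\tfrac{q}{4r^2})^{-1}$ is a product of two elements of $\boldsymbol{P}\left( \mathbb{F} \right)$, whence $4r^2 \in \boldsymbol{P}\left( \mathbb{F} \right)$.

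Finally I would divide out the factor $4$. Since $2 = 1 + 1$ lies in the prime field and $\chr \left( \mathbb{F} \right) \neq 2$, the element $4 = 2^2$ is a nonzero element of $\boldsymbol{P}\left( \mathbb{F} \right)$ and is therefore invertible there, so $r^2 = (4r^2) \cdot 4^{-1} \in \boldsymbol{P}\left( \mathbb{F} \right)$, as claimed. I do not expect a genuine obstacle in this argument: the only points that require care are checking that every element to be inverted is nonzero, which relies on $q \neq 0$, $r \neq 0$ and $\chr \left( \mathbb{F} \right) \neq 2$, and that $4$ is invertible in the prime field, again guaranteed by $\chr \left( \mathbb{F} \right) \neq 2$.
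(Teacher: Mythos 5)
Your proof is correct and takes essentially the same route as the paper's: extract a perfect distance $q$ from the circular point set of cardinality at least three, feed it into \Cref{prop_algebraic_con} to obtain $({}^*)$, and conclude $r^2 \in \boldsymbol{P}\left( \mathbb{F} \right)$ by closure of the prime field under subtraction and inversion. You merely make explicit the arithmetic the paper compresses into one sentence, including the checks that $q \neq 0$ (via \Cref{coro_no_vanish}) and that $4$ is invertible since $\chr \left( \mathbb{F} \right) \neq 2$.
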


\begin{proof}
	Assume that two different points from $ C \left(M,r \right)_{\mathbb{F}} $ have squared distance $ q $. Since $ C \left(M,r \right)_{\mathbb{F}} $	is a circular point set, all squared distances between the points in $ C \left(M,r \right)_{\mathbb{F}} $ are rational and so perfect because it contains at least three points. By \Cref{prop_algebraic_con} we get that $ q \in \square_{\boldsymbol{P} \left( \mathbb{F} \right)^{*}} \subseteq \boldsymbol{P} \left( \mathbb{F} \right)^{*} $ and $ 1 - \tfrac{q}{4r^2} \in \square_{\boldsymbol{P} \left( \mathbb{F} \right)} \subseteq \boldsymbol{P} \left( \mathbb{F} \right) $. Since $ \boldsymbol{P} \left( \mathbb{F} \right) $ is a field and $ 4 \in \boldsymbol{P} \left( \mathbb{F} \right) $ we conclude that $ r^2 \in \boldsymbol{P} \left( \mathbb{F} \right) $.
\end{proof}

The next part is the last step to show that perfect distances also have the extension property.

\begin{lemma} \label{lem_perfect_means_propagation}
	Assume that there is a circular point set on $ C \left((0,0),r \right)_{\mathbb{F}} $ with cardinality at least $ 3 $ and let $ q \neq 0 $ be a rational squared distance with $ 1 - \frac{q}{4r^2} \in \square_{\boldsymbol{P} \left( \mathbb{F} \right)} $. Then $ q $ has the extension property with respect to $ C \left((0,0),r \right)_{\mathbb{F}} $.
\end{lemma}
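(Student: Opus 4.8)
The plan is to normalize by a rotation, then to use the hypothesis $1-\tfrac{q}{4r^2}\in\square_{\boldsymbol{P}\left(\mathbb{F}\right)}$ to show that the new point's parameter scales into $\boldsymbol{P}\left(\mathbb{F}\right)$ exactly as the parameters of the given point set do; once this is established, every relevant squared distance collapses to a product of squares over the prime field and the extension property follows from \Cref{dist_lemma}.

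First I would fix the circular point set $S$ with $\lvert S\rvert\ge 3$ provided by the hypothesis, together with an arbitrary point $P'$ of the circle and a point $P\in S$ with $D^2\left(P',P\right)=q$. Since $q\neq 0$, \Cref{coro_no_vanish} gives $P'\neq P$. The rotation group $\Theta_{\left(0,0\right)}^{\mathbb{F}}$ acts transitively on the circle and preserves squared distances by \Cref{squared_distance_invariant} (compare the proof of \Cref{lemma_uniform_prop}); applying the rotation that sends $P$ to the distinguished point $\left(0,r\right)$ preserves both hypothesis and claim, so I may assume $P=\left(0,r\right)\in S$ and $P'=P_s$ for some $s$ with $s^2\neq -1$. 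By \Cref{coro_radius} we have $r^2\in\boldsymbol{P}\left(\mathbb{F}\right)$, so $4r^2$ is a nonzero element of $\boldsymbol{P}\left(\mathbb{F}\right)$.

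Next I would record the structure of $S$. Each point $P_{t_i}\in S$ distinct from $\left(0,r\right)$ forms, together with $\left(0,r\right)$ and a further point of $S$, a triangle all of whose squared distances are rational; hence the computation in the proof of \Cref{prop_algebraic_con} applies and yields $u_i\coloneqq\tfrac{t_i}{2r}\in\boldsymbol{P}\left(\mathbb{F}\right)$, while $q_i\coloneqq D^2\left(\left(0,r\right),P_{t_i}\right)$ is a nonzero square in $\boldsymbol{P}\left(\mathbb{F}\right)$. The decisive step — and the one I expect to be the main obstacle — is to obtain the same control over the parameter $s$ of the new point. From $D^2\left(P_s,\left(0,r\right)\right)=\tfrac{4r^2}{s^2+1}=q$ and \Cref{dist_lemma} I get $s^2+1=\tfrac{4r^2}{q}$, hence
$$\left(\tfrac{s}{2r}\right)^2=\tfrac{s^2}{4r^2}=\tfrac{1}{q}\left(1-\tfrac{q}{4r^2}\right).$$
Because $q$ is a nonzero square and $1-\tfrac{q}{4r^2}\in\square_{\boldsymbol{P}\left(\mathbb{F}\right)}$ by hypothesis, the right-hand side is a square in $\boldsymbol{P}\left(\mathbb{F}\right)$; since a square root in $\mathbb{F}$ of an element of $\square_{\boldsymbol{P}\left(\mathbb{F}\right)}$ already lies in $\boldsymbol{P}\left(\mathbb{F}\right)$, I conclude $w\coloneqq\tfrac{s}{2r}\in\boldsymbol{P}\left(\mathbb{F}\right)$. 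This is exactly where the algebraic circle property enters: without it the parameter $s$ need not rescale into the prime field and the argument breaks down.

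Finally I would substitute $s=2rw$ and $t_i=2ru_i$ into \Cref{dist_lemma}. Writing $\tfrac{1}{s^2+1}=\tfrac{q}{4r^2}$ and $\tfrac{1}{t_i^2+1}=\tfrac{q_i}{4r^2}$ and using $\left(s-t_i\right)^2=4r^2\left(w-u_i\right)^2$, the factor $4r^2$ cancels and I obtain
$$D^2\left(P_s,P_{t_i}\right)=\frac{4r^2\left(s-t_i\right)^2}{\left(s^2+1\right)\left(t_i^2+1\right)}=q\,q_i\left(w-u_i\right)^2.$$
Here $q,q_i\in\square_{\boldsymbol{P}\left(\mathbb{F}\right)}$ and $w-u_i\in\boldsymbol{P}\left(\mathbb{F}\right)$, so the product lies in $\square_{\boldsymbol{P}\left(\mathbb{F}\right)}$; note that this cancellation means I never need $4r^2$ itself to be a square in the prime field, only an element of it. Together with $D^2\left(P_s,\left(0,r\right)\right)=q\in\square_{\boldsymbol{P}\left(\mathbb{F}\right)}$, this shows that $P'$ has r.s.d.\ to every point of $S$, which is precisely the extension property. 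Apart from the parameter step $w\in\boldsymbol{P}\left(\mathbb{F}\right)$, the remainder is routine substitution into \Cref{dist_lemma}.
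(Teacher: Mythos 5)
Your proposal is correct and follows essentially the same route as the paper's proof: normalize so that the base point is $\left(0,r\right)$, use \Cref{prop_algebraic_con} (via the cardinality-$3$ hypothesis) to obtain $\tfrac{t_i}{2r}\in\boldsymbol{P}\left(\mathbb{F}\right)$ for the points of the set and the hypothesis $1-\tfrac{q}{4r^2}\in\square_{\boldsymbol{P}\left(\mathbb{F}\right)}$ to obtain $\tfrac{s}{2r}\in\boldsymbol{P}\left(\mathbb{F}\right)$, then conclude from \Cref{dist_lemma} that every relevant squared distance equals $q\,q_i\left(\tfrac{s}{2r}-\tfrac{t_i}{2r}\right)^2\in\square_{\boldsymbol{P}\left(\mathbb{F}\right)}$. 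Your only additions --- the explicit rotation argument justifying the normalization and spelling out, via a third point of $S$, why each $q_i$ is perfect --- are details the paper leaves implicit rather than a different method.
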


\begin{proof}
	Without loss of generality, we can assume that we have a point $ P =  \left( 0, r\right) $ which belongs to a circular point set on $ C \left((0,0),r \right)_{\mathbb{F}} $ of cardinality at least three and a point $ Q \in C \left((0,0),r \right)_{\mathbb{F}} $ such that the squared distance between $ P $ and $ Q $ is $ q $. We have to show now that $ Q $ and an arbitrary point $ R $ contained in the circular point set have r.s.d.. 
Let $ R $ be parametrized by the parameter $ t_R $ and let $ q' $ be the rational squared distance between $ P $ and $ R $. Note that $ q' $ satisfies the algebraic circle property by \Cref{prop_algebraic_con}. Let $ Q $ be parametrized by $ t_Q $. 
By \Cref{dist_lemma} we get
	\begin{align*}
		D^2 \left( Q, R \right) = \frac{q q'}{4r^2} \left( t_R - t_Q \right)^2 = q q' \left( \frac{t_R}{2r} - \frac{t_Q}{2r} \right)^2
	\end{align*}
as in the proof of \Cref{prop_algebraic_con}. Now we need to explain, why the last term in brackets is contained in the prime field. Since $ q' $ must be perfect, both $ q $ and $ q' $ satisfy the algebraic condition $ ({}^*) $ in \Cref{prop_algebraic_con}. Then we also have 
	\begin{align*}
		\frac{q}{4r^2} t_Q^2 = 1- \frac{q}{4r^2} \in \square_{\boldsymbol{P} \left( \mathbb{F} \right)} & & \mathrm{and} & & \frac{q'}{4r^2} t_R^2 = 1- \frac{q'}{4r^2} \in \square_{\boldsymbol{P} \left( \mathbb{F} \right)}
	\end{align*}	
which implies 
	\begin{align*}
		\frac{t_Q^2}{4r^2} \in \square_{\boldsymbol{P} \left( \mathbb{F} \right)} & & \mathrm{and} & & \frac{t_R^2}{4r^2} \in \square_{\boldsymbol{P} \left( \mathbb{F} \right)} 		
	\end{align*}
since 
$ q,q' \in \square_{\boldsymbol{P} \left( \mathbb{F} \right)} $.	 Thus we have $ D^2 \left( Q, R \right) \in \square_{\boldsymbol{P} \left( \mathbb{F} \right)} $. 
\end{proof}	

\begin{example} \label{ex_non_perf}
	Consider the circle $ C \left((0,0),1 \right)_{\mathbb{F}_5} $, then 
	 $$ C \left((0,0),1 \right)_{\mathbb{F}_5} = \left\{ \left(1,0 \right), \left( 4,0\right), \left(0,1 \right), \left(4,1 \right) \right\} $$
and the e-maximal circular point sets on $ C \left((0,0),1 \right)_{\mathbb{F}_5} $ have cardinality equal to $ 2 $ by \Cref{theorem_prime_field}. This means there is neither a perfect squared distance nor a squared distance with propagation property between the points of $ C \left((0,0),1 \right)_{\mathbb{F}_5} $. Although we have that 
	$$ q \coloneqq D^2 \left( \left(1 , 0 \right), \left( 4, 0\right) \right) = 4 \in \square_{\mathbb{F}_5} $$
and 
	$$ 1 - \frac{q}{4r^2} = 1 - \frac{4}{4} = 0 \in \square_{\mathbb{F}_5}, $$
so $ q $ satisfies $ ({}^*) $. In case we would not require for the extension property that the underlying circle must contain a circular point set of cardinality at least three, then $ 4 $ would have the extension property without being perfect and so there would not be an equivalence to these two terms.
\end{example}

	\begin{example}
We would like to calculate all the possible perfect distances with respect to $ C \left((0,0),1 \right)_{\mathbb{F}_7} $ and $ C \left((0,0),1 \right)_{\mathbb{F}_{49}} $. Clearly perfect distances must be elements of the set $ \square_{\mathbb{F}_7^{*}} = \left\{ 1,2,4 \right\} $. Moreover, by considering \Cref{F_7_with_distances} we see that $ 2 $ and $ 4 $ must be perfect distances by definition. Indeed, they also satisfy
	\begin{align*}
		1 - \frac{2}{4} = 1 - 4 = 4 \in \square_{\mathbb{F}_7} \\
		1 - \frac{4}{4} = 1-1 = 0 \in \square_{\mathbb{F}_7}
	\end{align*}
and so $ ({}^*) $ from \Cref{prop_algebraic_con} holds. Whereas
	$$ 1 - \frac{1}{4} = 1-2 = 6 \notin \square_{\mathbb{F}_7} $$
is not a perfect distances because $ ({}^*) $ is violated. This can also be seen in \Cref{ex_F7_extended} for $ C \left((0,0),1 \right)_{\mathbb{F}_{49}} $ as there does not exist three points with rational squared distances from each other such that one of the squared distances is equal to $ 1 $. So surprisingly all rational squared distances occurring in $ C \left( \left( 0,0 \right),1 \right)_{\mathbb{F}_7} $ are perfect. We will discuss this more generally in the next lemma.
\end{example}



	
\begin{lemma} \label{perf_vers_occurring}
	Let $ \mathbb{F} $ be a prime field, $ M \in \mathbb{F}^2 $, $ r \in \mathbb{F}^* $ and $ q \in \square_{ \mathbb{F}^*} $, then $ q $ is perfect with respect to a circle $ C \left( M,r \right)_{\mathbb{F}} $ if $ q \neq 4r^2 $ and there exist two points $ P_1,P_2 \in C \left( M,r \right)_{\mathbb{F}} $ such that $ D^2 \left( P_1, P_2 \right) = q $.
\end{lemma}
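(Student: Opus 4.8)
The plan is to normalize the configuration by symmetries and then write down the missing vertex explicitly. First I would reduce to $M = \left( 0,0 \right)$: since squared distances are translation-invariant by \Cref{squared_distance_invariant} and translations map circles to circles, whether $q$ is perfect is unaffected by the center. Next, using that $\Theta_{\left( 0,0 \right)}^{\mathbb{F}}$ preserves squared distances by \Cref{squared_distance_invariant} and acts transitively on $C \left( \left( 0,0 \right),r \right)_{\mathbb{F}}$ (as established in the proof of \Cref{lemma_uniform_prop}), I would rotate the given pair so that $P_1$ becomes the distinguished point $\left( 0,r \right)$ of \Cref{coro_parametrisation}. Because $q \in \square_{\mathbb{F}^*}$ is nonzero, the image of $P_2$ differs from $\left( 0,r \right)$, so it is a parametrized point $P_t$ with $t^2 \neq -1$. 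By \Cref{dist_lemma} we then have $q = D^2 \left( P_1, P_2 \right) = \frac{4r^2}{t^2+1}$, hence $t^2 + 1 = \frac{4r^2}{q}$; as $4r^2 = \left( 2r \right)^2$ and $q$ are nonzero squares and $\square_{\mathbb{F}^*}$ is a multiplicative group, this yields $t^2 + 1 \in \square_{\mathbb{F}^*}$. The hypothesis $q \neq 4r^2$ translates into $t \neq 0$, since $t = 0$ would force $t^2 + 1 = 1$ and therefore $q = 4r^2$.

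My candidate for the third vertex is $P_3 \coloneqq \left( 0,-r \right)$, i.e. the point $P_0$ parametrized by $0$. It remains to verify that the three mutual squared distances lie in $\square_{\boldsymbol{P} \left( \mathbb{F} \right)} = \square_{\mathbb{F}}$, using $\boldsymbol{P} \left( \mathbb{F} \right) = \mathbb{F}$ because $\mathbb{F}$ is prime. The side $D^2 \left( P_1, P_2 \right) = q$ is a square by assumption; the side $D^2 \left( P_1, P_3 \right) = 4r^2 = \left( 2r \right)^2$ is a square; and by \Cref{dist_lemma},
\begin{align*}
D^2 \left( P_2, P_3 \right) = \frac{4r^2 t^2}{t^2+1} = \left( 2rt \right)^2 \cdot \frac{1}{t^2+1},
\end{align*}
which is a square because $t^2 + 1 \in \square_{\mathbb{F}^*}$ forces its inverse into $\square_{\mathbb{F}^*}$ as well.

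The one genuine obstacle is non-degeneracy: the definition of perfect requires three \emph{different} points, and this is precisely where $q \neq 4r^2$ is needed. I would check pairwise distinctness as follows. Since $\chr \left( \mathbb{F} \right) \neq 2$ we have $P_1 = \left( 0,r \right) \neq \left( 0,-r \right) = P_3$; the distinguished point $\left( 0,r \right)$ is never a parametrized point when $\chr \left( \mathbb{F} \right) \neq 2$, so $P_2 = P_t \neq P_1$; and $P_2 = P_t \neq P_0 = P_3$ exactly because $t \neq 0$. Having produced three distinct points of $C \left( \left( 0,0 \right),r \right)_{\mathbb{F}}$ whose mutual squared distances are all rational and include $q$, I conclude that $q$ is perfect; and since perfectness is preserved under the translation and rotation used above, the same holds for the original circle $C \left( M,r \right)_{\mathbb{F}}$.
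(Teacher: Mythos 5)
Your proof is correct, and it reaches the witness triangle by a genuinely different construction than the paper. The paper normalizes to $ M = \left( 0,0 \right) $, $ P_1 = \left( r,0 \right) $ and solves the system $ \left( x-r \right)^2 + y^2 = q $, $ x^2 + y^2 = r^2 $ directly, getting $ P_2 = \left( x,y \right) $ with $ x = r - \tfrac{q}{2r} $ and $ y^2 = q \left( 1 - \tfrac{q}{4r^2} \right) \neq 0 $; its third vertex is the reflection $ P_3 = \left( x,-y \right) $, so the triangle is isosceles with two sides equal to $ q $ and base $ D^2 \left( P_2,P_3 \right) = 4y^2 = \left( 2y \right)^2 $, automatically a square because $ y $ lies in the prime field (the paper states this base as $ 4q $, which is a slip --- the correct value is $ 4q \left( 1 - \tfrac{q}{4r^2} \right) $ --- but harmless since it is a square either way; your computation sidesteps this entirely). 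The hypotheses $ q \neq 0 $ and $ q \neq 4r^2 $ enter there precisely to force $ y \neq 0 $, i.e. $ P_2 \neq P_3 $. You instead rotate $ P_1 $ onto the distinguished point $ \left( 0,r \right) $ of \Cref{coro_parametrisation} (the transitivity from the proof of \Cref{lemma_uniform_prop} indeed extends from the unit circle to radius $ r $, e.g. via $ \theta^{a,b} $ with $ a = \tfrac{p_2}{r} $, $ b = -\tfrac{p_1}{r} $), translate the hypothesis into the parametric statement $ t^2 + 1 = \tfrac{4r^2}{q} \in \square_{\mathbb{F}^{*}} $ with $ t \neq 0 $, and take the antipode $ \left( 0,-r \right) $ as third vertex, with sides $ q $, $ 4r^2 $ and $ \tfrac{4r^2 t^2}{t^2+1} = qt^2 $, all squares by the group structure of $ \square_{\mathbb{F}^{*}} $ and the fact that $ t \in \mathbb{F} = \boldsymbol{P} \left( \mathbb{F} \right) $. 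What each buys: your route makes explicit the condition $ t^2 + 1 \in \square_{\mathbb{F}^{*}} $ that governs the partition $ C_1, C_2 $ in \Cref{prop_fin_prim_field}, and localizes the role of $ q \neq 4r^2 $ as $ t \neq 0 $ with a clean distinctness check; the paper's coordinate construction is more self-contained and is reused verbatim in the converse direction of \Cref{prop_alg_crit} and in \Cref{lemma_two_points}, which is presumably why the author chose it.
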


\begin{proof}
	Without loss of generality and by the uniformity property we can assume that $ M = \left( 0,0 \right) $ and $ P_1 = \left( r,0 \right) $. Assume further that $ D^2 \left( P_1, P_2 \right) = q $ and set $ P_2 \coloneqq  \left( x,y \right) \in C \left( M,r \right)_{\mathbb{F}} $. Then the following equations are satisfied:
	\begin{align*}
		\left( x-r \right)^2 + y^2 &= q \\
		x^2 + y^2 &= r^2
	\end{align*}
If we solve for $ x $ we get $ x = r - \tfrac{q}{2r} $. Solving for $ y^2 $ we get $ y^2 = q \left( 1 - \tfrac{q}{4r^2} \right) \neq 0 $. Now for any choice of $ y =  \sqrt{q \left( 1 - \tfrac{q}{4r^2} \right)} $ we can consider the point $ P_3 \coloneqq \left( x,-y \right) $ and we also have $ D^2 \left( P_1, P_3 \right) = q $. Since $ D^2 \left( P_2, P_3 \right) = 4q \in \square_{\mathbb{F}} $ we conclude that $ q $ is perfect.
\end{proof}


\begin{example} 
We will now consider an application of perfect distances. Assume we have an arbitrary prime field $ \mathbb{F} $, $ M \in \mathbb{F}^2 $, $ r \in \mathbb{F} $ and three different points $ A,B,C \in C \left( M,r \right)_{\mathbb{F}} $ defining a triangle such that the squared distances $ a $ and $ b $ are rational (see \Cref{existence_perf_dist}). Then all squared distances between the points $ A,B,C $  are rational by \Cref{perf_vers_occurring} as an immediate consequence.

	
	
The case $ \mathbb{F} = \mathbb{Q} $ this could also be proved without any knowledge of perfect distances and extension property in the following way: By the law of sines we have
	$$ \frac{\sqrt{c}}{\sin\left( \gamma \right)} = 2r \in \mathbb{Q} $$
since the three points are circumscribed by a circle with rational radius $ r $. 
Moreover, we can calculate the area of a parallelogram defined by the connection vectors of $ CB $ and $ CA $, denoted by $ \vec{a}, \vec{b} \in \mathbb{Q}^2 $, by the formula
	$$ \sin \left( \gamma \right) \sqrt{a} \sqrt{b} = \vec{a} \times \vec{b} \in \mathbb{Q} $$
which means that $ \sin \left( \gamma \right) \in \mathbb{Q} $ and therefore also $ \sqrt{c} \in \mathbb{Q} $.

Note in case that $ \mathbb{F} $ is any prime field, then we can still prove the generalized statement above even if we do not have tools like sinus.

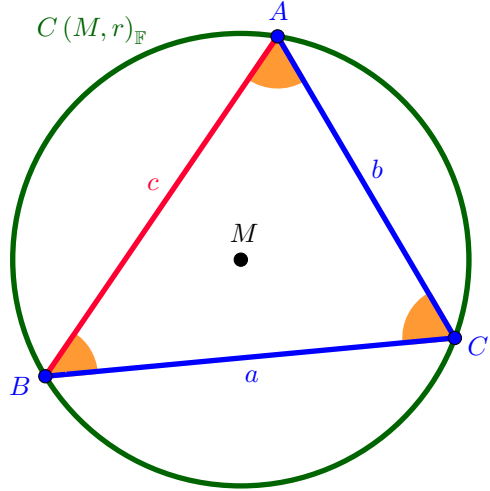
\begin{figure}[h]  
\begin{center}
\usetikzlibrary{arrows}
\pagestyle{empty}
\newcommand{\degre}{\ensuremath{^\circ}}
\definecolor{ffzztt}{rgb}{1.,0.6,0.2}
\definecolor{ffqqtt}{rgb}{1.,0.,0.2}
\definecolor{qqqqff}{rgb}{0.,0.,1.}
\definecolor{qqwuqq}{rgb}{0.,0.39215686274509803,0.}
\begin{tikzpicture}[line cap=round,line join=round,>=triangle 45,x=1.0cm,y=1.0cm]
\clip(0.8265174099083505,0.19456270551648566) rectangle (9.254143606615882,7.834151127986416);
\draw [shift={(2.429295003558384,2.4535602755780905)},line width=2.pt,color=ffzztt,fill=ffzztt,fill opacity=0.10000000149011612] (0,0) -- (5.392924982272518:0.6566981451979895) arc (5.392924982272518:55.87846816637941:0.6566981451979895) -- cycle;
\draw [shift={(5.483395696706457,6.960798642327045)},line width=2.pt,color=ffzztt,fill=ffzztt,fill opacity=0.10000000149011612] (0,0) -- (-124.12153183362061:0.6566981451979895) arc (-124.12153183362061:-59.758144961307195:0.6566981451979895) -- cycle;
\draw [shift={(7.814688378199527,2.9619588959831527)},line width=2.pt,color=ffzztt,fill=ffzztt,fill opacity=0.10000000149011612] (0,0) -- (120.24185503869283:0.6566981451979895) arc (120.24185503869283:185.39292498227252:0.6566981451979895) -- cycle;
\draw [line width=2.pt,color=qqwuqq] (5.,4.) circle (3.cm);
\draw [line width=2.pt,color=qqqqff] (7.814688378199527,2.9619588959831527)-- (5.483395696706457,6.960798642327045);
\draw [line width=2.pt,color=qqqqff] (7.814688378199527,2.9619588959831527)-- (2.429295003558384,2.4535602755780905);
\draw [line width=2.pt,color=ffqqtt] (2.429295003558384,2.4535602755780905)-- (5.483395696706457,6.960798642327045);
\begin{scriptsize}
\draw [fill=black] (5.,4.) circle (2.5pt);
\draw[color=black] (5.0,4.35) node {\fontsize{10}{0} $M$};
\draw[color=qqwuqq] (3.0, 7.05) node {\fontsize{10}{0} $ C \left( M,r \right)_{\mathbb{F}} $};
\draw [fill=qqqqff] (7.814688378199527,2.9619588959831527) circle (2.5pt);
\draw[color=qqqqff] (8.075,2.85) node {\fontsize{10}{0} $C$};
\draw [fill=qqqqff] (5.483395696706457,6.960798642327045) circle (2.5pt);
\draw[color=qqqqff] (5.45,7.3) node {\fontsize{10}{0} $A$};
\draw [fill=qqqqff] (2.429295003558384,2.4535602755780905) circle (2.5pt);
\draw[color=qqqqff] (2.05,2.325) node {\fontsize{10}{0} $B$};
\draw[color=qqqqff] (6.75,5.2) node {\fontsize{10}{0} $b$};
\draw[color=qqqqff] (5.1,2.45) node {\fontsize{10}{0} $a$};
\draw[color=ffqqtt] (3.8,5.0) node {\fontsize{10}{0} $c$};
\draw[color=ffzztt] (2.75,2.65) node {\fontsize{10}{0} $\beta $};
\draw[color=ffzztt] (5.45,6.525) node {\fontsize{10}{0} $\alpha $};
\draw[color=ffzztt] (7.4,3.1) node {\fontsize{10}{0} $\gamma $};
\end{scriptsize}
\end{tikzpicture}

\caption{Three points $ A,B,C $ on a circle $ C \left(M,r \right)_{\mathbb{F}} $ over a prime field plane $ \mathbb{F} $ with rational squared distances denoted by $ a, b $ and squared distance $ c $.} 
\label{existence_perf_dist}
\end{center}
\end{figure}
\end{example}

\end{subsection}

\begin{subsection}{C-maximal circular point sets on circles over arbitrary field planes}

The goal of this section is to determine the cardinality of c-maximal circular point sets 
of a circle $ C \left(M,r \right)_{\mathbb{F}} $ over an arbitrary field plane $ \mathbb{F} \times \mathbb{F} $ where $ M \in \mathbb{F}^2 $ and $ r \in \mathbb{F}^{*} $. We will assume that $ \chr \left( \mathbb{F} \right) \neq 2 $. 
Now we would like to find a parametrization of perfect distances since then we can construct them directly.

\begin{proposition} \label{prop_alg_crit}
	A squared distance $ q \neq 4r^2 $ is perfect with respect to $ C \left( M,r \right)_{\mathbb{F}} $ if and only if there is $ t \in \boldsymbol{P} \left( \mathbb{F} \right)^{*} $ such that $ q = \left( \tfrac{4tr^2}{t^2 + r^2} \right)^2 $.
\end{proposition}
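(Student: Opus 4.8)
The plan is to read the formula $q = \left(\tfrac{4tr^2}{t^2+r^2}\right)^2$ as a rational parametrization that encodes exactly the algebraic circle property of \Cref{prop_algebraic_con}, namely $q \in \square_{\boldsymbol{P}\left(\mathbb{F}\right)}$ and $1 - \tfrac{q}{4r^2} \in \square_{\boldsymbol{P}\left(\mathbb{F}\right)}$. The single computation driving both directions is the identity
\begin{align*}
1 - \frac{q}{4r^2} = \frac{\left(t^2+r^2\right)^2 - 4t^2r^2}{\left(t^2+r^2\right)^2} = \left( \frac{t^2 - r^2}{t^2 + r^2} \right)^2,
\end{align*}
which I would use alongside $q = \left(\tfrac{4tr^2}{t^2+r^2}\right)^2$. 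The key standing fact is that a perfect distance can only exist when $r^2 \in \boldsymbol{P}\left(\mathbb{F}\right)$, by \Cref{coro_radius}; I would therefore treat $r^2$ as an element of the prime field throughout. This is precisely what keeps every quantity below inside $\boldsymbol{P}\left(\mathbb{F}\right)$ rather than merely inside $\mathbb{F}$, and checking this containment is the only genuine difficulty — the rest is routine algebra.

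For the implication ``parametrization $\Rightarrow$ perfect'', I would start from $t \in \boldsymbol{P}\left(\mathbb{F}\right)^{*}$. Since $r^2 \in \boldsymbol{P}\left(\mathbb{F}\right)$, both $\tfrac{4tr^2}{t^2+r^2}$ and $\tfrac{t^2-r^2}{t^2+r^2}$ lie in $\boldsymbol{P}\left(\mathbb{F}\right)$, so the two displayed expressions immediately give $q \in \square_{\boldsymbol{P}\left(\mathbb{F}\right)}$ and $1 - \tfrac{q}{4r^2} \in \square_{\boldsymbol{P}\left(\mathbb{F}\right)}$, while the hypothesis $q \neq 4r^2$ becomes $t^2 \neq r^2$, i.e.\ $1 - \tfrac{q}{4r^2} \neq 0$. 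I would then rerun the construction of \Cref{perf_vers_occurring} over the arbitrary field: after translating the center to the origin via \Cref{squared_distance_invariant}, take $P_1 = \left(r,0\right)$ and set $P_2 = \left(x,y\right)$, $P_3 = \left(x,-y\right)$ with $x = r - \tfrac{q}{2r}$ and $y^2 = q\left(1 - \tfrac{q}{4r^2}\right)$. The algebraic circle property just verified forces $y^2 \in \square_{\boldsymbol{P}\left(\mathbb{F}\right)} \subseteq \square_{\mathbb{F}}$, so $y \in \mathbb{F}$ and the three points truly lie on the circle; one then checks $D^2\left(P_1,P_2\right) = D^2\left(P_1,P_3\right) = q$ and $D^2\left(P_2,P_3\right) = 4y^2$, all in $\square_{\boldsymbol{P}\left(\mathbb{F}\right)}$, with the points distinct because $q \neq 0$ and $q \neq 4r^2$. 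Thus $q$ is a side of a rational triangle on the circle, hence perfect.

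For the converse ``perfect $\Rightarrow$ parametrization'', I would apply \Cref{prop_algebraic_con} to obtain $c := \sqrt{q} \in \boldsymbol{P}\left(\mathbb{F}\right)^{*}$ and $w \in \boldsymbol{P}\left(\mathbb{F}\right)$ with $w^2 = 1 - \tfrac{q}{4r^2}$, and then simply invert the parametrization. Solving $\tfrac{4tr^2}{t^2+r^2} = c$ yields the quadratic $c\,t^2 - 4r^2 t + c\,r^2 = 0$, whose discriminant is
\begin{align*}
16r^4 - 4c^2r^2 = 4r^2\left(4r^2 - q\right) = 16r^4 w^2 = \left(4r^2 w\right)^2 .
\end{align*}
Because $r^2, c, w \in \boldsymbol{P}\left(\mathbb{F}\right)$, the two roots $t = \tfrac{2r^2\left(1 \pm w\right)}{c}$ already lie in $\boldsymbol{P}\left(\mathbb{F}\right)$; moreover $q \neq 0$ gives $w^2 \neq 1$, so $1 \pm w \neq 0$ and both roots are nonzero, i.e.\ $t \in \boldsymbol{P}\left(\mathbb{F}\right)^{*}$. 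Finally, the rearranged equation $c\left(t^2+r^2\right) = 4r^2 t$ rules out $t^2 + r^2 = 0$ (it would force $t = 0$, hence $r^2 = 0$), so $\tfrac{4tr^2}{t^2+r^2} = c$ is legitimate and $q = c^2 = \left(\tfrac{4tr^2}{t^2+r^2}\right)^2$, as required. The main obstacle, as noted, is not the manipulation but the bookkeeping that pins $c$, $w$, and $r^2$ in the prime field; once \Cref{prop_algebraic_con} and \Cref{coro_radius} supply these three rationalities, the discriminant is a square in $\boldsymbol{P}\left(\mathbb{F}\right)$ and the equivalence closes.
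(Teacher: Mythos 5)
Your proof is correct, and it diverges from the paper's in an interesting way in exactly one place. The direction ``parametrization $\Rightarrow$ perfect'' is the paper's argument almost verbatim: verify the algebraic circle property $({}^*)$ via $1 - \tfrac{q}{4r^2} = \bigl(\tfrac{t^2-r^2}{t^2+r^2}\bigr)^2$, then build the triangle $P_1 = (r,0)$, $P_{2,3} = (x,\pm y)$ as in \Cref{perf_vers_occurring}; you are in fact slightly more careful here, since $D^2\left(P_2,P_3\right) = 4y^2 = 4q\bigl(1-\tfrac{q}{4r^2}\bigr)$, which the paper's proof of \Cref{perf_vers_occurring} miswrites as $4q$ (harmless, as both are squares). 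The converse is where you genuinely differ: the paper views $\bigl(\sqrt{q},\sqrt{1-\tfrac{q}{4r^2}}\,\bigr)$ as a prime-field point on the ellipse $x^2 + 4r^2y^2 = 4r^2$ and inverts the parametrization by the secant through $(0,-1)$ --- one intersection point is already known, so after dividing by $x_0$ the step is \emph{linear} and no discriminant ever appears --- whereas you solve the quadratic $ct^2 - 4r^2t + cr^2 = 0$ head-on and must therefore check that its discriminant is the square $(4r^2w)^2$. Your computation is right, and it makes the role of the second half of $({}^*)$ explicit where the paper hides it inside the slope $m_0 \in \boldsymbol{P}\left(\mathbb{F}\right)^{*}$; you also record non-degeneracy checks the paper glosses over ($t \neq 0$ via $w^2 \neq 1$, and $t^2 + r^2 \neq 0$), and your two roots $2r^2(1\pm w)/c$ multiply to $r^2$, consistent with the $t \leftrightarrow \tfrac{r^2}{t}$ symmetry exploited later in \Cref{theo_general_c-maximal}. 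One shared wrinkle, inherited from the paper itself: in the ``if'' direction you both invoke \Cref{coro_radius} for $r^2 \in \boldsymbol{P}\left(\mathbb{F}\right)$, although its hypothesis (an existing circular point set of cardinality three) is precisely what is being constructed at that moment; the honest reading is that $r^2 \in \boldsymbol{P}\left(\mathbb{F}\right)$ is a standing assumption needed for the parametrized $q$ to be rational at all, so this is not a defect specific to your write-up.
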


\begin{proof}
	Let $ q $ be perfect, then the algebraic property $ ({}^*) $ holds true for $ q $ by \Cref{prop_algebraic_con}. Hence, we can find a solution $ \left( x_0, y_0 \right) \in \boldsymbol{P} \left( \mathbb{F} \right)^2 $ which satisfy the equation of the ellipse
	\begin{displaymath}
		\epsilon: x^2 + 4r^2y^2 = 4r^2
	\end{displaymath}
such that $ x_0^2 = q $ and $ y_0^2 = 1- \tfrac{q}{4r^2} $. The goal is now to find $ t_0 \in \boldsymbol{P} \left( \mathbb{F} \right) $ such that $ x_0 $ can be parametrized as above. Through the points $ \left( x_0, y_0 \right) $ and $ \left( 0,-1 \right) $ we find a line which is defined by the equation 
	$$ l: y = m_0x -1 $$
for some $ m_0 \in \boldsymbol{P} \left( \mathbb{F} \right)^{*} $. Therefore $ x_0,y_0,m_0 $ satisfy the following equation
	\begin{displaymath}
		x_0^2 + 4r^2 \left( m_0x_0-1 \right)^2 = 4r^2.
	\end{displaymath}
As $ x_0 \neq 0 $ we can divide by $ x_0 $ and solve for it. By defining $ t_0 \coloneqq \tfrac{1}{2m_0} $ we finally get
	\begin{align*}
		x_0 = \frac{8 m_0 r^2}{1 + 4 m_0^2 r^2} = \frac{4t_0r^2}{t_0^2 + r^2}.
	\end{align*}

For the other direction, let $ q = \left( \tfrac{4tr^2}{t^2 + r^2} \right)^2 $ for $ t \in \boldsymbol{P} \left( \mathbb{F} \right)^{*} $. At first we will show that the algebraic circle property is satisfied by $ q $. Since $ t \in \boldsymbol{P} \left( \mathbb{F} \right)^{*} $ and $ r^2 \in \boldsymbol{P} \left( \mathbb{F} \right) $ by \Cref{coro_radius}, we get $ q \in \square_{\boldsymbol{P} \left( \mathbb{F} \right)} $. Moreover, we have 
	\begin{align*}
		1 - \frac{q}{4r^2} = \frac{ \left( t^2 + r^2 \right)^2 }{ \left( t^2 + r^2 \right)^2} - \frac{4t^2r^2}{\left( t^2 + r^2 \right)^2} = \bigg( \underbrace{ \frac{ t^2 - r^2 }{ t^2 + r^2 } }_ {\in \boldsymbol{P} \left( \mathbb{F} \right)} \bigg)^2 \in \square_{\boldsymbol{P} \left( \mathbb{F} \right)}.
	\end{align*}
Note that $ 1 - \tfrac{q}{4r^2} $ is not equal to zero because $ q \neq 4r^2 $. We would like to show now that there exist three points in $ C \left( M,r \right)_{\mathbb{F}} $ such that all the mutual distances are rational and at least one of the sides has squared distance $ q $. For this set $ x \coloneqq r - \tfrac{q}{2r} \in \mathbb{F} $ and $ y = \sqrt{q \left( 1 - \tfrac{q}{4r^2} \right)} \in \mathbb{F} $ where the sign does not matter. We can define $ P_1 \coloneqq \left(r,0 \right) $, $ P_2 \coloneqq \left(x,y \right) $ and $ P_3 \coloneqq \left(x,-y\right) $ as in the proof of \Cref{perf_vers_occurring} and show that the mutual distances between the points are all rational and the rational distance $ q $ occurs. Hence, $ q $ is perfect.	
\end{proof}

The last step will be to generalize \Cref{theorem_prime_field} for the case of an arbitrary field $ \mathbb{F} $. But before we can prove it we need the following.

	\begin{lemma} \label{lemma_two_points}
Let $ P \in C \left(M,r \right)_{\mathbb{F}} $ and let $ q $ be a perfect distance with respect to $ C \left(M,r \right)_{\mathbb{F}} $. Then there exists $ Q \in C \left(M,r \right)_{\mathbb{F}} $ such that $ D^2 \left( P,Q \right) = q $. Moreover, there exists two such points $ Q_1 \neq Q_2 $ if and only if $ q \neq 4r^2 $.
	\end{lemma}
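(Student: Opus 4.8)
The plan is to reduce to a convenient normal form and then solve the two defining equations explicitly, using the algebraic characterisation of perfect distances from \Cref{prop_algebraic_con} to guarantee that the solution actually lies in the field.

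First I would use that squared distances are invariant under translation and rotation (\Cref{squared_distance_invariant}) together with the transitivity of the rotation group on a circle established in the proof of \Cref{lemma_uniform_prop}, to assume without loss of generality that $ M = \left(0,0\right) $ and $ P = \left(r,0\right) $. Under this normalisation any candidate point $ Q = \left(x,y\right) \in C\left(\left(0,0\right),r\right)_{\mathbb{F}} $ with $ D^2\left(P,Q\right) = q $ must satisfy both $ \left(x-r\right)^2 + y^2 = q $ and $ x^2 + y^2 = r^2 $. Subtracting the second equation from the first determines $ x $ uniquely as $ x = r - \tfrac{q}{2r} $, and substituting back yields $ y^2 = q\bigl(1 - \tfrac{q}{4r^2}\bigr) $.

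Next I would invoke that $ q $ is perfect: by \Cref{prop_algebraic_con} both $ q $ and $ 1 - \tfrac{q}{4r^2} $ lie in $ \square_{\boldsymbol{P}\left(\mathbb{F}\right)} $, so their product $ y^2 $ is again a square in the prime field and hence in $ \mathbb{F} $. Therefore a root $ y \in \mathbb{F} $ exists, the point $ Q = \left(x,y\right) $ lies on $ C\left(\left(0,0\right),r\right)_{\mathbb{F}} $, and $ D^2\left(P,Q\right) = q $; this settles the existence part.

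Finally, for the ``moreover'' statement I would note that $ x $ is forced and $ y $ is determined only up to sign, so the sole possible points are $ \left(x,y\right) $ and $ \left(x,-y\right) $. Because $ \chr\left(\mathbb{F}\right) \neq 2 $, these are distinct exactly when $ y \neq 0 $, i.e. when $ y^2 \neq 0 $. Since a perfect distance is never zero we have $ q \neq 0 $, so $ y^2 = q\bigl(1 - \tfrac{q}{4r^2}\bigr) $ vanishes precisely when $ 1 - \tfrac{q}{4r^2} = 0 $, that is, when $ q = 4r^2 $. Hence two distinct points $ Q_1 \neq Q_2 $ exist if and only if $ q \neq 4r^2 $. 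The step I would be most careful about is this appeal to \Cref{prop_algebraic_con}: it is exactly the perfectness of $ q $ that forces $ y^2 $ to be a square in $ \mathbb{F} $, so that $ Q $ exists at all; for a merely rational $ q $ failing $ ({}^*) $ the value $ y^2 $ need not be a square and no such point would be available.
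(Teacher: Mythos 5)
Your proposal is correct and takes essentially the same approach as the paper: after normalising the centre to the origin, the paper places $ P $ at $ \left(0,-r\right) $ and writes the two solutions explicitly as $ \left(\pm\alpha\beta,\ \tfrac{q}{2r}-r\right) $, where $ \alpha,\beta $ are the roots of $ q $ and $ 1-\tfrac{q}{4r^2} $ supplied by \Cref{prop_algebraic_con} — exactly your elimination argument with $ x $ forced and $ y $ determined up to sign. Your sign-counting for the ``moreover'' part, using $ \chr\left(\mathbb{F}\right)\neq 2 $ and the fact that a perfect distance is nonzero, likewise matches the paper's reasoning that $ \alpha\beta\neq-\alpha\beta $ fails precisely when $ q = 4r^2 $.
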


\begin{proof}
	It is enough to show the statement for $ P = \left( 0,-r \right) $ and $ M = \left( 0,0 \right) $. Since $ q $ is perfect, there exist roots $ \alpha, \beta \in \boldsymbol{P} \left( \mathbb{F} \right)^{*} $ of $ q $ and $ 1 - \frac{q}{4r^2} $, respectively. Let $ q_1 \coloneqq \alpha \beta $ and $ q_2 \coloneqq \tfrac{q}{2r} -r $. Define $ Q^{+} \coloneqq \left( q_1, q_2 \right) $. Then we have
	\begin{align*}
		q_1^2 + q_2^2 = q \left( \frac{4 r^2 - q}{4r^2} \right) + \left( \frac{q^2 - 4qr^2 + 4r^4}{4r^2} \right) = r^2,
	\end{align*}
which implies $ Q^{+} \in C \left(r,M \right)_{\mathbb{F}} $. Moreover, we have
	\begin{align*}
		D^2 \left( P,Q^{+} \right) = q_1^2 + \left( q_2 + r \right)^2 = q \left( \frac{4 r^2 - q}{4r^2} \right) + \frac{q^2}{4r^2} = q.
	\end{align*}
Now, let us define $ Q^{-} \coloneqq \left( -q_1, q_2 \right) $, then it is easy to see that we also have $ Q^{-} \in C \left(M,r \right)_{\mathbb{F}} $ and that the above equations can have at most one solution for $ q_2 $ (observe that $ q_2 $ is uniquely defined by them).
Hence, we found all points satisfying the equations from this lemma and $ Q_1 \neq Q_2 $, i.e. $ \alpha \beta \neq - \alpha \beta $. Since the characteristic of $ \mathbb{F} $ is different from two and $ q \neq 0 $ this is equivalent to $ q \neq 4r^2 $.
\end{proof}


\begin{theorem} \label{theo_general_c-maximal}
	Let $ C \left(M,r \right)_{\mathbb{F}} $ be an arbitrary circle over an arbitrary field $ \mathbb{F} $. Then the c-maximal circular point sets have the following cardinalities:
	\begin{table}[h]
\begin{tabularx}{\textwidth}{p{0.22\textwidth}  |X|c|c|c}
characteristic & cases & $ r \in \boldsymbol{P} \left( \mathbb{F} \right) $ & $ r^2 \in \boldsymbol{P} \left( \mathbb{F} \right) \not\ni r $ & $ r^2 \notin \boldsymbol{P} \left( \mathbb{F} \right) $ \\
\hline
$ \chr \left( \mathbb{F} \right) = 2 $ & &  $ \lvert \mathbb{F} \rvert $ &  $ \lvert \mathbb{F} \rvert $ &  $ \lvert \mathbb{F} \rvert $ \\
\hline
$ \chr \left( \mathbb{F} \right) = 3 $ & &  $ 2 $ &  $ 2 $ &  $ \leq 2 $ \\
\hline
\multirow{2}{*}{ $ 3  <  \chr \left( \mathbb{F} \right)  <  \infty $}
	& $ \nexists t \in \boldsymbol{P} \left( \mathbb{F} \right): t^2 + 1 = 0 $ & $ \frac{\lvert \chr \left( \mathbb{F} \right) \rvert + 1}{2} $ & $ \frac{\lvert \chr \left( \mathbb{F} \right) \rvert - 1}{2} $ & $ \leq 2 $ \\
\cline{2-5}
	& $ \exists t \in \boldsymbol{P} \left( \mathbb{F} \right): t^2 + 1 = 0 $ & $ \frac{\lvert \chr \left( \mathbb{F} \right) \rvert - 1}{2} $ & $ \frac{\lvert \chr \left( \mathbb{F} \right) \rvert + 1}{2} $ & $ \leq 2 $ \\
\hline	
 $ \chr \left( \mathbb{F} \right) = \infty $ & & $ \aleph_0 $ & $ \aleph_0 $ & $ \leq 2 $
\end{tabularx}
\caption{Cardinalities of c-maximal circular point sets over arbitrary field planes}
\label{table_for_theo}
\end{table}	
\end{theorem}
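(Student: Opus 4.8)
The plan is to reduce the whole table to a single count: the number of perfect-distance neighbours of one fixed base point. After normalising $M=(0,0)$ by translation invariance (\Cref{squared_distance_invariant}) and taking $P_0=(0,-r)$ with the parametrisation of \Cref{coro_parametrisation}, several columns fall out immediately. The characteristic-two row is trivial: by \Cref{dist_lemma} every squared distance on the circle vanishes, so the whole circle is one circular point set and \Cref{coro_parametrisation} gives cardinality $\lvert\mathbb{F}\rvert$. The last column ($r^2\notin\boldsymbol{P}(\mathbb{F})$, $\chr(\mathbb{F})\neq 2$) is handled by \Cref{coro_radius}: a circular point set of cardinality at least three would force $r^2\in\boldsymbol{P}(\mathbb{F})$, so here every c-maximal set has cardinality at most $2$. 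The interesting content is therefore the region $3<\chr(\mathbb{F})\le\infty$ together with the separate small prime $\chr(\mathbb{F})=3$, always under $r^2\in\boldsymbol{P}(\mathbb{F})$.

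I would dispose of characteristic three directly. Feeding $s\in\boldsymbol{P}(\mathbb{F})^*=\mathbb{F}_3^*$ (so $s^2=1$) into the perfect-distance parametrisation of \Cref{prop_alg_crit}, a short check shows $\tfrac{4sr^2}{s^2+r^2}$ is either undefined (when $1+r^2=0$) or produces $q=4r^2$; hence no perfect distance different from $4r^2$ exists, there is no rational triangle, and every c-maximal set has cardinality at most $2$. To pin the first two columns to exactly $2$ I would exhibit one rational pair: the reflected points $(x,y),(x,-y)$ have squared distance $4y^2=4(r^2-x^2)$, and choosing $x\in\boldsymbol{P}(\mathbb{F})$ with $r^2-x^2\in\square_{\boldsymbol{P}(\mathbb{F})^*}$ (possible by inspection in $\mathbb{F}_3$) yields two distinct points at rational distance.

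The heart of the argument is $3<\chr(\mathbb{F})\le\infty$ with $r^2\in\boldsymbol{P}(\mathbb{F})$. By the uniformity property (\Cref{lemma_uniform_prop}) all c-maximal sets share one cardinality, so it suffices to study the one through $P_0$. Whenever it has at least three points, \Cref{lem_perfect_means_propagation} identifies it as $\{P_0\}$ together with all points of the circle lying at a perfect distance from $P_0$, and \Cref{lemma_two_points} lists those neighbours explicitly as $(\pm\alpha\beta,\ \tfrac{q}{2r}-r)$ with $\alpha,\beta\in\boldsymbol{P}(\mathbb{F})$ roots of $q$ and $1-\tfrac{q}{4r^2}$. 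In column one, $r\in\boldsymbol{P}(\mathbb{F})$ makes both coordinates lie in $\boldsymbol{P}(\mathbb{F})$, so the entire set lives in $\boldsymbol{P}(\mathbb{F})^2$ and therefore coincides with the c-maximal set of the prime-field circle, whose cardinality is supplied by \Cref{theorem_prime_field} (finite case) and \Cref{prop_infin_prim_field} (the case $\boldsymbol{P}(\mathbb{F})=\mathbb{Q}$). This reduction is robust enough to cover the remaining small prime $p=5$, where no triangle exists: a hypothetical third point would again be forced into $\boldsymbol{P}(\mathbb{F})^2$ and contradict the prime-field count.

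Column two ($r^2\in\boldsymbol{P}(\mathbb{F})\not\ni r$) is where the real work sits. Here the neighbours instead lie in $\{(a,br):a,b\in\boldsymbol{P}(\mathbb{F})\}$. Using $D^2(P_0,P_t)=\tfrac{4r^2t^2}{t^2+1}$ from \Cref{dist_lemma}, the property $({}^*)$ forces $t^2+1\in\square_{\boldsymbol{P}(\mathbb{F})^*}$ and $\sign(r^2t^2)=1$, i.e.\ $\sign(t^2)=\sign(r^2)=-1$; the admissible parameters are thus exactly $t=cr$ with $c\in\boldsymbol{P}(\mathbb{F})^*$ and $dc^2+1\in\square_{\boldsymbol{P}(\mathbb{F})^*}$, where $d=r^2$. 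I would count such $c$ through the conic $e^2-dc^2=1$: over a finite prime field this is the norm-one torus of $\boldsymbol{P}(\mathbb{F})(\sqrt{d})$, of order $p+1$, and an elementary character-sum computation (with the number of $c$ satisfying $dc^2+1=0$ equal to $2$ or $0$ according as $\sqrt{-1}\notin\boldsymbol{P}(\mathbb{F})$ or $\sqrt{-1}\in\boldsymbol{P}(\mathbb{F})$) gives $\tfrac{p-3}{2}$ or $\tfrac{p-1}{2}$ admissible values. Adding $P_0$ and noting that here the antipode sits at distance $4r^2=4d$, a non-square that is excluded, produces $\tfrac{p-1}{2}$ and $\tfrac{p+1}{2}$ — precisely the interchange relative to column one. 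For $\chr(\mathbb{F})=\infty$ the same conic over $\mathbb{Q}$ already carries $(1,0)$, hence infinitely many rational points, so the set is countably infinite. The main obstacle is exactly this column-two count: getting the character sum and the $dc^2+1=0$ correction right is what yields the $\pm1$ swap between the two columns, and it is the only step not reducible to a direct appeal to the earlier lemmas.
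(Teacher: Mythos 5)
Your proposal is correct, but it reaches the table by a genuinely different route than the paper. The paper runs a single uniform count across both columns: it parametrizes perfect distances by $q_r(t)=\bigl(\tfrac{4tr^2}{t^2+r^2}\bigr)^2$ for admissible $t\in\boldsymbol{P}(\mathbb{F})^*$, exploits the four-fold symmetry $q_r(\pm t)=q_r\bigl(\pm\tfrac{r^2}{t}\bigr)$ (degenerating to two-fold exactly at $t=\pm r$, which produces the antipode at distance $4r^2$), converts distances to points via \Cref{lemma_two_points}, and decides existence of non-admissible $t$ (i.e.\ $t^2=-r^2$) with the $\sign$ homomorphism from \Cref{prop_fin_prim_field} — this is what makes the $\pm1$ swap between the two columns appear, and it forces the paper into an ad hoc discussion of whether $4r^2$ is genuinely perfect, settled separately for $\chr(\mathbb{F})=5$. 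You instead split the two columns: in column one you observe that the neighbours $\bigl(\pm\alpha\beta,\tfrac{q}{2r}-r\bigr)$ from \Cref{lemma_two_points} have coordinates in $\boldsymbol{P}(\mathbb{F})$ when $r\in\boldsymbol{P}(\mathbb{F})$, so the c-maximal set literally coincides with the prime-field class and \Cref{theorem_prime_field} (resp.\ \Cref{prop_infin_prim_field}) supplies the count — this recycles earlier results instead of reproving them, and it absorbs the delicate $p=5$ case (cf.\ \Cref{ex_non_perf}) automatically, since a third point would contradict the prime-field class size $2$; in column two you classify admissible parameters as $t=cr$ with $dc^2+1\in\square_{\boldsymbol{P}(\mathbb{F})^*}$ ($d=r^2$ a non-square) and count via the norm-one conic $e^2-dc^2=1$ of order $p+1$, which makes the interchange of $\tfrac{p-1}{2}$ and $\tfrac{p+1}{2}$ conceptually transparent (norm-one torus versus split torus) at the cost of importing the standard conic point count, which the paper avoids. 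I verified your arithmetic: $2N=p+1-2-\#\{e=0\}$ with the $dc^2+1=0$ correction $2$ or $0$ according as $\sqrt{-1}\notin\boldsymbol{P}(\mathbb{F})$ or not gives $N=\tfrac{p-3}{2}$ resp.\ $\tfrac{p-1}{2}$ neighbours, hence $\tfrac{p-1}{2}$ resp.\ $\tfrac{p+1}{2}$ points after adding $P_0$ and correctly excluding the antipode at distance $4d\notin\square_{\boldsymbol{P}(\mathbb{F})}$. Two small points you should make explicit in a write-up: the identification of the c-maximal set with $\{P_0\}\cup\{\text{perfect neighbours}\}$ via \Cref{lem_perfect_means_propagation} needs a bootstrap triangle, which the converse direction of \Cref{prop_alg_crit} supplies whenever an admissible parameter with $q\neq 4r^2$ exists (your degenerate cases $\chr(\mathbb{F})=3$ and $p=5$, column one, are exactly where it does not, and you handle them separately, so no circularity); and the mutual rationality of the neighbours in column two, which you delegate to the extension property, also follows in one line from \Cref{dist_lemma}: $D^2\bigl(P_{c_1r},P_{c_2r}\bigr)=\tfrac{4d^2\left(c_1-c_2\right)^2}{\left(dc_1^2+1\right)\left(dc_2^2+1\right)}\in\square_{\boldsymbol{P}(\mathbb{F})}$.
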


\begin{proof}	
	Fifteen different cases need to be proven. If $ \chr \left( \mathbb{F} \right) = 2 $, then the result follows by \Cref{dist_lemma}. In the following we assume that $ \chr \left( \mathbb{F} \right) \neq 2 $. At first we show that there are no perfect distances between points of a circle of radius $ r $ if $ r^2 \notin \boldsymbol{P} \left( \mathbb{F} \right) $. This is a direct consequence of \Cref{coro_radius}.
Thus, we can go on with the cases where $ r^2 \in \boldsymbol{P} \left( \mathbb{F} \right) $ and $ 2 <  \chr \left( \mathbb{F} \right) < \infty $.
	
Let 	$ \chr \left( \mathbb{F} \right) = 3 $. We start with the case $ r \in \boldsymbol{P} \left( \mathbb{F} \right) = \mathbb{F}_3 $. Since we always assume that $ r \neq 0 $, we have that either $ r = 1 $ or $ r = 2 $ in $ \mathbb{F}_3 $. Hence, $ r^2 = 1 $. Since $ t \in \boldsymbol{P} \left( \mathbb{F} \right)^{*} $ we have the only squared distance $ q = \left( \tfrac{4tr^2}{t^2 + r^2} \right)^2 = 1 $ with respect to any circle $ C \left(M,r \right)_{\mathbb{F}} $ for $ M \in \mathbb{F} $ arbitrary. However, we also have $ 4r^2 = 1 $ and so for each point on $ C \left(M,r \right)_{\mathbb{F}} $ there exist another unique point on $ C \left(M,r \right)_{\mathbb{F}} $ such that the squared distances between them is equal to $ 1 $ by \Cref{lemma_two_points}.
Since there are no more perfect distances we deduce that there are no three different points on $ C \left(M,r \right)_{\mathbb{F}} $ such that the pairwise squared distances are rational.
\newline
Consider the case $ r^2 \in \boldsymbol{P} \left( \mathbb{F} \right) $, but $ r \notin \boldsymbol{P} \left( \mathbb{F} \right) $. Then we deduce $ r^2 = 2 $ as $ \boldsymbol{P} \left( \mathbb{F} \right) \setminus \square_{\boldsymbol{P} \left( \mathbb{F} \right)} = \left\{ 2 \right\} $. Since $ t \in \boldsymbol{P} \left( \mathbb{F} \right)^{*} $ for the parametrization of a perfect distance we conclude that there is no perfect distance as $ t^2 = 1 $ and then we would have a division by zero by \Cref{prop_alg_crit}. Hence, it is clear that the c-maximal circular point sets in $ \mathbb{F}^2 $ have cardinality at most two. We will show now that they really have cardinality equal to $ 2 $. Consider the points $ \left( r,0 \right) \in \mathbb{F}^2 $  and $ \left( r - \tfrac{1}{2r},r \right) \in \mathbb{F}^2 $. The squared distance of them is
	$$ \frac{1}{4r^2} + r^2 = \frac{1}{2} + 2 = 2+2 = 1 \in \square_{\boldsymbol{P} \left( \mathbb{F} \right)} .$$ Hence, the c-maximal circular point sets are of cardinality $ 2 $.
	
We consider now the cases where $ 3 <  \chr \left( \mathbb{F} \right) < \infty $. Define 
	$$ q_r(t) = \left( \frac{4tr^2}{t^2 + r^2} \right)^2 $$
for any $ t \in \boldsymbol{P} \left( \mathbb{F} \right)^{*} $ such that $ t $ is admissible, i.e. $ t^2 \neq -r^2 $. We would like to count all the prefect distances we can construct. For the following part we will assume that all rational squared distances equal to $ 4r^2 $ are perfect and discuss the special cases where this is not true below. Observe that for all admissible $ t $ we have
	$$ q_r\left(\pm \frac{t}{r^2}\right) = \Bigg( \frac{4\tfrac{r^2}{t}r^2}{\bigl(\tfrac{r^2}{t}\bigr)^2 + r^2} \Bigg)^2 = \left( \frac{4tr^2}{r^2 + t^2} \right)^2 = q_r \left( \pm t \right), $$
so $ q_r $ gives us the same perfect distance when evaluated on $ \pm t, \pm \tfrac{r^2}{t} \in \boldsymbol{P} \left( \mathbb{F} \right)^{*} $. Moreover, there are no other values in $ \boldsymbol{P} \left( \mathbb{F} \right)^{*} $ such that we get the same perfect distance as for the other four values above because the highest power of $ t $ in $ q_r $ is $ 4 $. However, it might happen that not all of the above four values are different from each other. Clearly $ t \neq -t $ and $ \tfrac{r^2}{t} \neq -\tfrac{r^2}{t} $ because $ t \neq 0 $ and $ \chr \left( \mathbb{F} \right) \neq 2 $. We will consider now the two cases case $ t = \pm \tfrac{r^2}{t} $. Let us start with $ t = \tfrac{r^2}{t} $, then $ t^2 = r^2 $, so $ t = \pm r $. If $ t = -\tfrac{r^2}{t} $, then $ t^2 = -r^2 $ and then $ t $ would not be admissible, so this case never occur. This means the situation can be summarized as follows: If $ t \in \boldsymbol{P} \left( \mathbb{F} \right)^{*} $ is admissible, then either all values $ \pm t, \pm \tfrac{r^2}{t} \in \boldsymbol{P} \left( \mathbb{F} \right)^{*} $ are different and we can construct two perfect distances with them or we have $ t = \pm r $ and then we can construct only one perfect distance with it by \Cref{lemma_two_points}. Hence, if all $ t \in \boldsymbol{P} \left( \mathbb{F} \right)^{*} $ are admissible, we can construct $ \tfrac{\vert \boldsymbol{P} \left( \mathbb{F} \right)^{*}\vert}{2} $ perfect distances and otherwise, i.e. if $ t \in \boldsymbol{P} \left( \mathbb{F} \right)^{*} $ with $ t^2 = -r^2 $, then $ \tfrac{\vert \boldsymbol{P} \left( \mathbb{F} \right)^{*}\vert}{2}-1 $. Observe that $ \vert \boldsymbol{P} \left( \mathbb{F} \right)^{*} \vert = \chr \left( \mathbb{F} \right) -1 $ and that the number of perfect distances we can construct gives us the number of points on a c-maximal circular point set plus one as the perfect distances can be constructed from any starting point, i.e. we have $ \tfrac{\lvert \chr \left( \mathbb{F} \right) \rvert + 1}{2} $ points in the first case and $ \frac{\lvert \chr \left( \mathbb{F} \right) \rvert - 1}{2} $ points in the second case which defines a c-maximal circular point set on a circle $ C(M,r)_{\mathbb{F}} $.

We are now ready to discuss the four cases in detail. For this we only need to decide whether $ t \in \boldsymbol{P} \left( \mathbb{F} \right)^{*} $ with $ t^2 = -r^2 $ exists or not. Observe that a $ t \in \boldsymbol{P} \left( \mathbb{F} \right)^* $ with $ t^2 = -r^2 $ only exists if and only if either $ -1,r^2 \in \square_{\boldsymbol{P} \left( \mathbb{F} \right)} $, i.e. there is a $ t \in \boldsymbol{P} \left( \mathbb{F} \right)^{*} $  with $ t^2+ 1 = 0 $ and $ r \in \boldsymbol{P} \left( \mathbb{F} \right) $ or $ -1,r^2 \notin \square_{\boldsymbol{P} \left( \mathbb{F} \right)} $ i.e. there is no $ t \in \boldsymbol{P} \left( \mathbb{F} \right)^{*} $  with $ t^2+ 1 = 0 $ and $ \pm r \notin \boldsymbol{P} \left( \mathbb{F} \right) $ by the homomorphism from the proof of \Cref{prop_fin_prim_field}. Hence, in the above cases we have $ \frac{\lvert \chr \left( \mathbb{F} \right) \rvert - 1}{2} $ points in a c-maximal circular point set and otherwise $ \frac{\lvert \chr \left( \mathbb{F} \right) \rvert +1}{2} $.

We postponed to discuss the cases where a rational squared distance $ q $ is not perfect for the cases $ 3 <  \chr \left( \mathbb{F} \right) < \infty $ (compare with \Cref{ex_non_perf}). Clearly this can only happen if there are no other perfect distances with respect to the corresponding circle because $ q $ satisfies the algebraic circle property and for other perfect distances with squared distances not equal to $ 4r^2 $ we can construct a circular point set of cardinality at least $ 3 $ by \Cref{lemma_two_points}. Hence $ q $ has the extension property by \Cref{lem_perfect_means_propagation} and is perfect. Now if there are at least three admissible values $ t \in \boldsymbol{P} \left( \mathbb{F} \right)^{*} $ then there must exist also other perfect distances than only $ 4r^2 $. Since at most two values in $ \boldsymbol{P} \left( \mathbb{F} \right)^{*} $ might not be admissible we conclude that $ q $ is always perfect if $ \vert \boldsymbol{P} \left( \mathbb{F} \right)^{*} \vert > 5 $. This means that we only have to consider the case $ \chr \left( \mathbb{F} \right) = 5 $ separately. If $ r = 1 $, then $ t = 2,3 $ is not admissible and we have $ q_{1} \left( 1 \right) = 4 = 4r^2 = q_{4} \left( 1 \right) $. Moreover, for all other choices of $ r \in \boldsymbol{P} \left( \mathbb{F} \right)^{*} $ the new circle has squared distances multiplied by $ r^2 $ with respect to the circle with radius $ r = 1 $, so the rational squared distances are invariant and we would get the same result. Hence, we see that in this case $ \frac{\lvert \chr \left( \mathbb{F} \right) \rvert - 1}{2} = 2 $ holds true. 
\newline
If $ r^2 \in \boldsymbol{P} \left( \mathbb{F} \right)^{*} $ such that $ r \notin \boldsymbol{P} \left( \mathbb{F} \right)^{*} $, then $ r^2 \in \left\{2,3 \right\} $ an so all $ t \in \boldsymbol{P} \left( \mathbb{F} \right)^{*} $ are admissible because $ t^2 + r^2 \neq 0 $. Thus, existence of a squared distances not equal to $ 4r^2 $ is satisfied and we do not have to consider this case separately.

It remains to consider the cases where $ \chr \left( \mathbb{F} \right) = \infty $ and $ r^2 \in \boldsymbol{P} \left( \mathbb{F} \right) $. Since $ r \neq 0 $, it never happens that $ r^2 + t^2 = 0 $ for $ t \in \boldsymbol{P} \left( \mathbb{F} \right) $ (note that $ \boldsymbol{P} \left( \mathbb{F} \right) $ is isomorphic to $ \mathbb{Q} $ with respect to “$ + $”  and “$ \cdot $”). If we now only consider $ \mathbb{Q}_{+} $ instead of $ \mathbb{Q} $, then it is possible to find a bijection between $ \mathbb{Q}_{+} $ and the points of a c-maximal circular point set in $ C \left(M,r \right)_{\mathbb{F}} $. For this, assign the element $ 0 $ to the fixed point. Without loss of generality, we can assume that $ r > 0 $. If $ t = r $ (note that this can only happen if $ r \in \boldsymbol{P} \left( \mathbb{F} \right) $, then we have exactly one point which has a perfect distance equal to $ 4r^2 $ and otherwise, i.e. for $ t $ and $ \tfrac{r^2}{t} $ different from each other, we find two points which has perfect distance $ q_r(t) $ to our fixed point by \Cref{prop_alg_crit}. Hence, by using the axiom of choice, we can construct a bijection from $ \mathbb{Q}_{+} $ to the points of a c-maximal circular point set which means that the cardinality of this c-maximal circular point set is equal to the cardinality of the natural numbers.
\end{proof}

Interestingly, the function $ q_r $ and a similar counting method as in the proof of \Cref{theo_general_c-maximal} can also be applied to show \Cref{fact3_finite_fields} in case the cardinality of the finite field is odd:

\begin{corollary}
	Let $ \mathbb{F} $ be a finite field such that $ \chr \left( \mathbb{F} \right) \neq 2 $. Then 
	$$ \sqrt{-1} \in \mathbb{F} \Longleftrightarrow\vert \mathbb{F} \vert \equiv 1 \pmod{4} $$ 
and
$$ \sqrt{-1} \notin \mathbb{F} \Longleftrightarrow\vert \mathbb{F} \vert \equiv 3 \pmod{4} $$ 
hold. 
\end{corollary}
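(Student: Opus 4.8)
The plan is to reduce the statement to a divisibility fact about the number of points on the unit circle, which I would then extract from an order-four rotational symmetry, in the spirit of the size-four orbit counting used in the proof of \Cref{theo_general_c-maximal}. Since $\chr\left(\mathbb{F}\right) \neq 2$ and $\mathbb{F}$ is finite, we have $\vert \mathbb{F} \vert = p^n$ with $p$ odd, so $\vert \mathbb{F} \vert$ is odd and hence $\vert \mathbb{F} \vert \equiv 1$ or $3 \pmod{4}$. It therefore suffices to prove the two implications $\sqrt{-1} \in \mathbb{F} \Rightarrow \vert \mathbb{F}\vert \equiv 1 \pmod{4}$ and $\sqrt{-1} \notin \mathbb{F} \Rightarrow \vert \mathbb{F}\vert \equiv 3 \pmod{4}$; since the hypotheses and the conclusions each exhaust and partition the possibilities, the two displayed biconditionals follow at once.

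First I would observe that $\theta^{0,1}$ is a genuine rotation, because $0^2 + 1^2 = 1$, and that it maps the unit circle $C\left((0,0),1\right)_{\mathbb{F}}$ to itself, since rotations around the origin fix the origin and preserve squared distances by \Cref{squared_distance_invariant}. Explicitly $\theta^{0,1}\left(x,y\right) = \left(y,-x\right)$, so $\left(\theta^{0,1}\right)^2\left(x,y\right) = \left(-x,-y\right)$ and $\left(\theta^{0,1}\right)^4 = \mathrm{id}$; thus $\theta^{0,1}$ generates a cyclic group of order four acting on the points of the unit circle. The key point is that this action is \emph{free}: a fixed point of $\left(\theta^{0,1}\right)^2$ would satisfy $\left(-x,-y\right)=\left(x,y\right)$, i.e. $2x = 2y = 0$, which forces $\left(x,y\right) = \left(0,0\right)$ because $\chr\left(\mathbb{F}\right) \neq 2$, and the origin does not lie on the circle. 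Hence neither $\left(\theta^{0,1}\right)^2$ nor $\theta^{0,1}$ has a fixed point on the circle, every orbit has exactly four elements, and therefore $4 \mid \vert C\left((0,0),1\right)_{\mathbb{F}} \vert$.

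Next I would invoke \Cref{coro_parametrisation}, which gives $\vert C\left((0,0),1\right)_{\mathbb{F}} \vert = \vert \mathbb{F}\vert - 1$ when $\sqrt{-1} \in \mathbb{F}$ and $\vert \mathbb{F}\vert + 1$ when $\sqrt{-1} \notin \mathbb{F}$. Combining with $4 \mid \vert C\left((0,0),1\right)_{\mathbb{F}}\vert$ yields $\vert \mathbb{F}\vert \equiv 1 \pmod{4}$ in the first case and $\vert \mathbb{F}\vert \equiv 3 \pmod{4}$ in the second, which is exactly the pair of implications needed. This free order-four action is the same phenomenon that underlies the grouping of admissible parameters into orbits of size four in the proof of \Cref{theo_general_c-maximal}; here, though, it is applied to the parameter $t$ running over all of $\mathbb{F}$ (equivalently, to the points of the whole circle) rather than over the prime field, which is precisely what makes the count sensitive to $\vert \mathbb{F}\vert$ rather than only to $\chr\left(\mathbb{F}\right)$ and thus lets the argument cover non-prime finite fields.

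The main obstacle is really just the freeness verification, and it is mild: one must be sure that the half-turn $\left(\theta^{0,1}\right)^2 = -\mathrm{id}$ has no fixed point on the circle, which is where the hypothesis $\chr\left(\mathbb{F}\right) \neq 2$ enters decisively (in characteristic two, $-\mathrm{id} = \mathrm{id}$ and the argument collapses, consistently with the circle having $\vert \mathbb{F}\vert$ points there). A secondary point to state carefully is the bookkeeping in \Cref{coro_parametrisation}: it counts the circle over the full field $\mathbb{F}$, and the inadmissible parameters with $t^2 = -1$ that are excluded are exactly the two values present precisely when $\sqrt{-1} \in \mathbb{F}$; tracking this correctly is what ties the $\pm 1$ in the cardinality to the existence of $\sqrt{-1}$ in $\mathbb{F}$.
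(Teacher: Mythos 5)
Your proof is correct, but it takes a genuinely different route from the paper's. The paper re-runs the counting mechanism of \Cref{theo_general_c-maximal}: it considers the level sets of $ q_1(t) = \bigl( \tfrac{4t}{t^2+1} \bigr)^2 $ on the admissible parameters $ t \in \mathbb{F}^{*} $, where each value is attained exactly on the quadruple $ \{ \pm t, \pm \tfrac{1}{t} \} $ (geometrically, the orbit of a Klein four-group of axis reflections); since that grouping degenerates at the fixed values $ t = \pm 1 $ and since the inadmissible parameters $ t^2 = -1 $ exist precisely when $ \sqrt{-1} \in \mathbb{F} $, the paper deduces $ 4 \mid \vert \mathbb{F}^{*} \vert - 4 $ in the case $ \sqrt{-1} \in \mathbb{F} $ and $ 4 \mid \vert \mathbb{F}^{*} \vert - 2 $ otherwise. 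You instead let the order-four rotation $ \theta^{0,1} $ act on the points of the circle itself and verify freeness: since every nontrivial subgroup of $ \mathbb{Z}/4\mathbb{Z} $ contains the half-turn $ \left( \theta^{0,1} \right)^2 = -\mathrm{id} $, whose only fixed point $ (0,0) $ is off the circle when $ \chr \left( \mathbb{F} \right) \neq 2 $, all orbits have size exactly four and $ 4 \mid \vert C\left((0,0),1\right)_{\mathbb{F}} \vert $; you then import the cardinality $ \vert \mathbb{F} \vert \mp 1 $ from \Cref{coro_parametrisation} (whose proof, as you implicitly rely on, does not use \Cref{fact3_finite_fields}, so there is no circularity). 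What each approach buys: your free rotation action eliminates the bookkeeping of exceptional parameter values ($ t = \pm 1 $, $ t = 0 $, the roots of $ t^2 = -1 $) that the paper's reflection-based grouping must handle by hand, at the price of invoking the parametrization count as a black box; the paper's version is self-contained at that point and deliberately showcases that the perfect-distance function $ q_r $ alone already encodes the quadratic character of $ -1 $. Your closing logic --- two mutually exclusive, exhaustive hypotheses mapped to two mutually exclusive, exhaustive congruence classes of the odd number $ \vert \mathbb{F} \vert $, upgrading one-directional implications to the stated biconditionals --- is also sound and matches how the paper's one-directional divisibility statements must be read.
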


\begin{proof}	
	Consider the function 
$$ q_1(t) = \left( \frac{4t}{t^2 + 1} \right)^2 $$
defined for all $ t \in \mathbb{F}^{*} $ such that $ t $ is admissible i.e. $ t^2+1 \neq 0 $. By the same arguments as in the proof of \Cref{theo_general_c-maximal} we can show that the terms $ \pm t, \pm \tfrac{1}{t} \in \mathbb{F} $ are all different from each other as far as $ t \neq \pm 1 $ for all admissible $ t $ because $ \chr \left( \mathbb{F} \right) \neq 2 $. Moreover, if  $ t \neq \pm 1 $, then $ q_1 $ evaluated on the four different elements $ t,-t, \tfrac{1}{t},-\tfrac{1}{t} $ is the same and for all other elements in $ \mathbb{F}^{*} $, $ q_1 $ will admit a value different from $ q_1 \left( t\right) $. Hence, in case $ \sqrt{-1} \in \mathbb{F} $, then $ \vert \mathbb{F}^{*} \vert-4 = \vert \mathbb{F} \vert-5 $  must be divisible by $ 4 $ and otherwise $ \vert \mathbb{F} \vert-2 = \vert \mathbb{F}^{*} \vert-3 $ must be divisible by $ 4 $.
\end{proof}
\end{subsection}
\end{section}

\begin{section}{Application in Cryptography}
\begin{subsection}{Rotation groups on circles}
	In this section we define a group on circles over arbitrary fields and we discover a connection between perfect distances and elements in the corresponding group on a circle over any prime field planes different from $ \mathbb{F}_j $ for $ j = 2,3,5 $.
	
\begin{definition} \label{def_rot_prod}
	Let $ \mathbb{F} $ be any field, $ r \in \mathbb{F}^{*} $ and $ \left( a_1,a_2 \right),\left( b_1,b_2 \right) \in C \left(\left( 0,0 \right),r \right)_{\mathbb{F}} $. Then the {\em rotation product} is defined in the following way:
	$$ \left( a_1,a_2 \right) \odot_r \left( b_1,b_2 \right) = \left( \frac{a_1b_1-a_2b_2}{r},\frac{a_1b_2+a_2b_1}{r} \right) $$
Moreover, for $ n \in \mathbb{N} $ we will inductively  define $ \left( a_1, a_2 \right)^n $ as $ n $-th power of $ \left( a_1, a_2 \right) $ with respect to the product $ \odot_r $ where $ \left( a_1, a_2 \right)^0 \coloneqq \left( r,0 \right) $.
\end{definition}
	
For simplicity, we will write $ \odot $ instead of $ \odot_r $ if it is clear from which set the considered points are. We show now that we can define a group with respect to our multiplication $ \odot $.
	
\begin{proposition}
	$ \left( C \left(\left( 0,0 \right),r \right)_{\mathbb{F}}, \odot, \left(r,0 \right) \right) $ has a group structure.
\end{proposition}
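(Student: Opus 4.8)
The plan is to recognize $\odot_r$ as ordinary multiplication in the quadratic $\mathbb{F}$-algebra $R \coloneqq \mathbb{F}[x]/(x^2+1)$, rescaled by $\tfrac{1}{r}$, under which all four group axioms become immediate. I would write $i$ for the image of $x$ in $R$, so that $i^2 = -1$ and every element of $R$ has a unique expression $a_1 + a_2 i$ with $a_1, a_2 \in \mathbb{F}$ (because $\{1, i\}$ is an $\mathbb{F}$-basis of $R$, the polynomial $x^2 + 1$ having degree two). Next I would introduce the conjugation $\overline{a_1 + a_2 i} \coloneqq a_1 - a_2 i$, which is an $\mathbb{F}$-algebra automorphism of $R$, and the norm $N(z) \coloneqq z \bar z = a_1^2 + a_2^2 \in \mathbb{F}$. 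Since conjugation is a ring homomorphism and $R$ is commutative, $N$ is multiplicative: $N(zw) = N(z) N(w)$. Under the bijection $\Phi \colon (a_1, a_2) \mapsto a_1 + a_2 i$, the circle $C((0,0),r)_{\mathbb{F}}$ corresponds exactly to $\{ z \in R : N(z) = r^2 \}$, and a direct check shows $\Phi\big( (a_1,a_2) \odot_r (b_1,b_2) \big) = \tfrac{1}{r}\, \Phi(a_1,a_2)\, \Phi(b_1,b_2)$, the product being taken in $R$ with $r \in \mathbb{F} \subseteq R$ a scalar.

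With this translation the axioms follow mechanically. For closure, if $N(z) = N(w) = r^2$ then $N\big(\tfrac{1}{r} z w\big) = \tfrac{1}{r^2} N(z) N(w) = r^2$, so the product lies on the circle again. Associativity holds because both $(z \odot_r w) \odot_r u$ and $z \odot_r (w \odot_r u)$ equal $\tfrac{1}{r^2} z w u$, using associativity in $R$. The element $(r,0)$ corresponds to the scalar $r \in R$, and $\tfrac{1}{r} r z = z$ shows it is a two-sided identity. Finally, given $z$ with $N(z) = r^2$, its conjugate $\bar z$ again satisfies $N(\bar z) = r^2$ and $\tfrac{1}{r} z \bar z = \tfrac{1}{r} r^2 = r$, so $\Phi^{-1}(\bar z) = (a_1, -a_2)$ is the inverse of $(a_1,a_2)$. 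Commutativity comes for free from commutativity of $R$, so $\big(C((0,0),r)_{\mathbb{F}}, \odot, (r,0)\big)$ is in fact an abelian group.

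I expect no serious obstacle, only two small points that merit care. First, the identification must be phrased so that it is valid for every field, including the cases where $x^2 + 1$ factors over $\mathbb{F}$ (when $\sqrt{-1} \in \mathbb{F}$) or has a repeated root (when $\chr(\mathbb{F}) = 2$); in all of these $R$ is still a commutative ring with $\{1,i\}$ an $\mathbb{F}$-basis, which is all the argument uses, and $z$ is automatically a unit in $R$ since $N(z) = r^2 \neq 0$. Second, one should deduce multiplicativity of $N$ directly from $\overline{zw} = \bar z\, \bar w$ rather than from any factorization of $R$. Should one prefer to avoid the algebra $R$ altogether, the same four axioms can be checked by coordinate computation; there the only tedious step is associativity, where expanding both sides of $(a \odot_r b) \odot_r c = a \odot_r (b \odot_r c)$ and clearing the common factor $\tfrac{1}{r^2}$ reduces to the identity for the product of three complex-like numbers, which I would leave as a routine verification.
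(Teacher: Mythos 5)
Your proof is correct, and it takes a genuinely different route from the paper's. The paper verifies the four axioms by direct coordinate computation: closure via the two-squares identity $\left( a_1b_1-a_2b_2 \right)^2 + \left( a_1b_2+a_2b_1 \right)^2 = \left( a_1^2+a_2^2 \right)\left( b_1^2+b_2^2 \right)$, explicit checks that $\left( r,0 \right)$ is neutral and $\left( a_1,-a_2 \right)$ is the inverse, and a line-by-line expansion for associativity. You instead transport everything into $R = \mathbb{F}[x]/(x^2+1)$, where $\odot_r$ becomes $\tfrac{1}{r}$ times the ring product; then associativity is inherited from $R$, closure is exactly multiplicativity of the norm $N(z) = z\bar z$ (which repackages the paper's two-squares computation as the single identity $\overline{zw} = \bar z\,\bar w$), and the inverse is exhibited directly by $\tfrac{1}{r} z \bar z = \tfrac{1}{r} N(z) = r$, recovering $\left( a_1,-a_2 \right)$ without solving equations --- note you never actually need your side remark that $z$ is a unit. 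The paper itself gestures at this identification, but only after the proposition and only for $\mathbb{F} = \mathbb{R}$, $r = 1$, by embedding the unit circle in $\mathbb{C}$; you make it the engine of the proof and, correctly, keep it valid over every field, observing that the argument uses only that $R$ is a commutative ring with $\mathbb{F}$-basis $\left\{ 1,i \right\}$ and never that it is a domain --- an essential point, since $R$ has zero divisors when $\sqrt{-1} \in \mathbb{F}$ and nilpotents when $\chr \left( \mathbb{F} \right) = 2$. What your route buys is conceptual economy: associativity (the only genuinely tedious step in the paper) comes for free, commutativity is immediate, and one sees why the construction works. What the paper's route buys is self-containedness: no auxiliary algebra to set up, at the cost of one algebraic expansion.
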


\begin{proof}
Let $ \left( a_1,a_2 \right), \left( b_1,b_2 \right), \left( c_1,c_2 \right) \in C \left(\left( 0,0 \right),r \right)_{\mathbb{F}} $. First of all, the product $ \odot $ is well-defined because
\begin{align*}
	D^2 \left( \left( \frac{a_1b_1-a_2b_2}{r},\frac{a_1b_2+a_2b_1}{r} \right), \left( 0,0 \right)\right) &= \frac{1}{r^2} \left( \left( a_1a_2-b_1b_2 \right)^2 + \left( a_1b_2+a_2b_1 \right)^2 \right) \\
	&= \frac{1}{r^2} \left( a_1^2a_2^2+b_1^2b_2^2 + a_1^2b_2^2+ a_2^2b_1^2\right) \\
	&= \frac{1}{r^2} \left(  a_1^2 + b_1^2 \right) \left( a_2^2 + b_2^2 \right)   \\
	&= r^2
\end{align*} 
and so $ \left( \frac{a_1b_1-a_2b_2}{r},\frac{a_1b_2+a_2b_1}{r} \right) \in C \left(\left( 0,0 \right),r \right)_{\mathbb{F}} $.

Moreover, it is easy to verify that $ \left( r,0 \right) \in \mathbb{F}^2 $ is the neutral element because
	$$ \left( r, 0 \right) \odot \left( a_1, a_2 \right) = \left( \frac{ra_1}{r} , \frac{ra_2}{r} \right) = \left( a_1, a_2 \right) $$
and that the element $ \left( a_1, -a_2 \right) \in C \left(\left( 0,0 \right),r \right)_{\mathbb{F}} $ is the inverse of $ \left(a_1,a_2 \right) $ since
	$$ \left( a_1,a_2 \right) \odot_r \left( a_1,-a_2 \right) = \left( \frac{a_1^2+a_2^2}{r} , \frac{-a_1a_2+ a_2a_1}{r} \right) = \left( r, 0 \right) .$$

It remains to show associativity:
	\begin{gather*}
		\big( \left( a_1,a_2 \right) \odot \left( b_1,b_2 \right) \big) \odot \left( c_1,c_2 \right) = \left( \frac{a_1b_1-a_2b_2}{r},\frac{a_1b_2+a_2b_1}{r} \right) \odot \left( c_1,c_2 \right) \\
		= \left( \frac{\left( a_1b_1-a_2b_2 \right)c_1-\left( a_1b_2+a_2b_1\right)c_2}{r^2}, \frac{\left(a_1b_1-a_2b_2 \right)c_2+\left( a_1b_2+a_2b_1\right)c_1}{r^2} \right) \\
		= \left( \frac{a_1\left(b_1c_1-b_2c_2 \right)-a_2 \left( b_1c_2+b_2c_1 \right)}{r^2}, \frac{a_1\left(b_1c_2+b_2c_1 \right)+a_2\left( b_1c_1-b_2c_2 \right)}{r^2} \right) \\
		= \left( a_1,a_2 \right) \odot \left( \frac{b_1c_1-b_2c_2}{r}, \frac{b_1c_2+b_2c_1}{r} \right) = \left( a_1,a_2 \right) \odot \big( \left( b_1,b_2 \right) \odot \left( c_1,c_2 \right) \big)
	\end{gather*}
\end{proof}		
	
In fact, the rotation product is strongly related to the complex product what we see in the next example. Observe that there are also other possible group actions on circles than the one we defined, see\cite[p.\,37-38]{Lemmermeyer2}.	

\begin{example} \label{ex_rotation}
	Let $ \mathbb{F} = \mathbb{R} $, $ r = 1 $ and $ i $ be the imaginary unit with $ i^2 = -1 $. If we interpret $ \left( a_1,b_1 \right), \left( a_2,b_2 \right) \in C \left(\left( 0,0 \right),1 \right)_{\mathbb{R}} $ as the elements $ a_1 + b_1i, a_2 + b_2i \in \mathbb{C} $, respectively, then we see that 
	$$ \left( a_1,b_1 \right) \odot \left( a_2,b_2 \right) = \left( a_1b_1-a_2b_2,a_1b_2+a_2b_1 \right) $$  
can also be interpreted as 
	$$ \left( a_1 + b_1i \right) \left( a_2 + b_2i \right) = a_1b_1-a_2b_2 + \left( a_1b_2 + a_2b_1 \right)i $$ 
which shows that we can embed $ C \left(\left( 0,0 \right),1 \right)_{\mathbb{R}} $ in $ \mathbb{C} $.
Therefore we get that the multiplication of elements in $ C \left(\left( 0,0 \right),1 \right)_{\mathbb{R}} $ is just addition of the angles if we consider them in polar coordinates. 
\end{example}

As in \Cref{def_rot_prod} we can also consider powers of points on circles. In case $ \mathbb{F} $ is finite, the subgroups generated by the elements must be cyclic. However, if we choose $ \mathbb{F} = \mathbb{Q} $, then the coordinates of the points are all rational and it is not clear whether such a subgroup is finite or not. We will treat this question starting with the following definition.
	
\begin{definition}
	We call the group $ \left( C \left(\left( 0,0 \right),r \right)_{\mathbb{F}}, \odot, \left(r,0 \right) \right) $ {\em rotation group}. Let $ \left( a_1,a_2 \right) \in C \left( \left(0,0 \right),r \right)_{\mathbb{F}} $ be a circle. Then we call the element $ \left( a_1,a_2 \right) $ {\em cyclic} or {\em acyclic} if the subgroup generated by this element is finite or infinite, respectively.
\end{definition}	
	
\begin{example} \label{ex_cycle}
	The elements $ \left( r,0 \right), \left( 0,r \right), \left(-r,0 \right), \left(0,-r \right) \in C \left( \left(0,0 \right),r \right)_{\mathbb{F}} $ are all cyclic since
	$$ \left( 0,r \right)^4 = \left( 0,-r \right)^4 = \left( -r,0 \right)^2 = \left( r,0 \right) .$$
\end{example}
	
We will show now that there are no other cyclic elements in $ C \left( \left(0,0 \right),r \right)_{\mathbb{Q}} $ than the four in \Cref{ex_cycle} for any $ r \in \mathbb{Q}^{*} $. For this we will use Gaussian integers and also some notion and notations from Chapter 1 as well as some results of it for regular and irregular primes.
	
\begin{lemma} \label{lemma_with_gauss_integers}
	Let $ x + yi \in \mathbb{Z}[i] $ where $ x,y $ are coprime and $ \N \left( x+iy\right) \in \mathbb{N} \setminus \left\{ 1 \right\} $ is a square. Then $ \left( x+yi \right)^n \notin \mathbb{N} $ for all $ n \in \mathbb{N} \setminus \left\{ 0 \right\} $.
\end{lemma}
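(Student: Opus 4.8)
The plan is to prove the statement directly: I assume $(x+yi)^n \in \mathbb{N}$ for some $n \ge 1$ and derive a contradiction from the three hypotheses. Write $z = x+yi$, so $\N(x+iy) = x^2+y^2$, and set $m := \sqrt{x^2+y^2}$. The first key observation is that the hypothesis ``$\N(x+iy)$ is a square'' is exactly what guarantees $m \in \mathbb{N}$, with $m \ge 2$ since $\N(x+iy) \neq 1$; consequently $z/m \in \mathbb{Q}(i)$. This is the only place the square condition is used, but it is essential. Since $\N(x+iy) > 1$ we have $z \neq 0$, so $z^n$ is a non-zero natural number and hence a positive real. Taking moduli then gives $z^n = |z^n| = |z|^n = m^n$, so that $\zeta := z/m$ satisfies $\zeta^n = 1$. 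Thus I have reduced the problem to showing that $z/m$ cannot be a root of unity lying in $\mathbb{Q}(i)$, unless the hypotheses fail.

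The central step is the classification of roots of unity in $\mathbb{Q}(i)$. Any $\zeta$ with $\zeta^n = 1$ is a root of the monic integer polynomial $X^n - 1$, hence an algebraic integer; lying in $\mathbb{Q}(i)$ it belongs to the ring of integers $\mathcal{O}_{\mathbb{Q}(i)} = \mathbb{Z}[i]$, and being invertible there it is a Gaussian unit. As the units of $\mathbb{Z}[i]$ are exactly $\{1,-1,i,-i\}$, I obtain $z/m \in \{1,-1,i,-i\}$, that is $z \in \{m,-m,mi,-mi\}$, and therefore $(x,y) \in \{(\pm m,0),\,(0,\pm m)\}$.

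Finally I would close using coprimality. In each of these four cases one coordinate vanishes; say $y = 0$. Then $\gcd(x,y) = |x| = 1$ forces $x = \pm 1$, whence $\N(x+iy) = x^2 = 1$, contradicting $\N(x+iy) \neq 1$ (the case $x = 0$ is symmetric). This contradiction shows that no $n \in \mathbb{N}\setminus\{0\}$ with $(x+yi)^n \in \mathbb{N}$ can exist, which is the claim.

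The point I expect to require the most care is the roots-of-unity classification invoked in the second step; everything else is bookkeeping. If one wishes to keep the argument self-contained and avoid quoting $\mathcal{O}_{\mathbb{Q}(i)} = \mathbb{Z}[i]$, the classification can be recovered by a Niven-type trace argument: writing $\zeta = a+bi$ with $a,b \in \mathbb{Q}$ and $a^2+b^2=1$, one has $2a = \zeta + \zeta^{-1} = \zeta + \zeta^{\,n-1}$, a rational algebraic integer, hence $2a \in \mathbb{Z} \cap [-2,2]$; the values $a = \pm\tfrac12$ are excluded because they force $b^2 = \tfrac34 \notin \square_{\mathbb{Q}}$, leaving precisely $a \in \{0,\pm1\}$ and thus $\zeta \in \{1,-1,i,-i\}$. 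Alternatively one can route through the Gaussian factorization of $z$: coprimality together with the parity fact that a perfect square is never $\equiv 2 \pmod 4$ shows $z$ is divisible neither by an inert prime nor by $1+i$, so $z = u\,w^2$ for a unit $u$ and some $w \in \mathbb{Z}[i]$; tracking the resulting constraint on $\arg z$ again lands on the same four exceptional points. Any of these routes feeds into the identical final case analysis, where the conditions $\gcd(x,y)=1$ and $\N(x+iy)\neq 1$ deliver the contradiction.
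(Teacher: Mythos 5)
Your proof is correct, but it runs on a different engine than the paper's. Both arguments begin with the same reduction: from $(x+yi)^n \in \mathbb{N}$ and $x^2+y^2 = m^2$ one obtains $(x+yi)^n = m^n$ (the paper derives this by applying the norm and conjugation, you by taking complex moduli). From there the paper works locally, by divisibility in $\mathbb{Z}[i]$: it picks a rational prime $p \mid m$, distinguishes the cases where $p$ stays prime in $\mathbb{Z}[i]$ or factors as $\alpha\overline{\alpha}$ (with the ramified case $\alpha$ associated to $\overline{\alpha}$ handled separately), and in every case concludes $p \mid x+yi$, hence $p \mid x$ and $p \mid y$, contradicting coprimality. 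You instead work globally: $\zeta = (x+yi)/m$ satisfies $\zeta^n = 1$ and lies in $\mathbb{Q}(i)$, hence is an algebraic integer, so lies in $\mathbb{Z}[i]$ and is a unit there, forcing $\zeta \in \{\pm 1, \pm i\}$; this pins $(x,y)$ to the four axis points $(\pm m,0)$, $(0,\pm m)$, which coprimality together with $\N\left(x+iy\right) \neq 1$ excludes. Your route is shorter and avoids both the inert/split/ramified case analysis and the external lemma the paper cites for the inert case; all it needs is the unit group of $\mathbb{Z}[i]$, and your Niven-type fallback is also sound ($2a = \zeta + \overline{\zeta}$ is a rational algebraic integer in $[-2,2]$, and $a = \pm\tfrac{1}{2}$ fails since $\tfrac{3}{4}$ is not a rational square). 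The trade-off is that taking moduli ties your argument to the archimedean embedding of $\mathbb{Q}(i)$, whereas the paper's argument is purely arithmetic in $\mathbb{Z}[i]$. One cosmetic point: your assertions $m \geq 2$ and $z \neq 0$ implicitly use $\N\left(x+iy\right) \neq 0$, which holds because coprimality rules out $x = y = 0$; that deserves half a sentence, nothing more.
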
	

\begin{proof}
	Assume we have $ n \in \mathbb{N} \setminus \left\{ 0 \right\} $ such that $ \left( x+yi \right)^n \in \mathbb{N} $ and let $ k \in \mathbb{N} $ with $ \N \left( x+iy\right) = k^2 $. Then we have
	$$ k^{2n} = \N \left( x+iy\right)^n = \N \left(\left( x+iy\right)^n \right) = \left( x+iy\right)^n \overline{\left( x+iy\right)^n} =  \left( x+iy\right)^{2n} ,$$
i.e. $ \left( x+yi \right)^n = k^n $. 

Observe that $ k > 1 $, so we find a prime $ p \in \mathbb{N} $ such that $ p \mid k $ and hence $ p^n \mid k^n $, so $ p^n \mid \left( x+iy\right)^n $ in $ \mathbb{Z}[i] $. Now if $ p \in \mathbb{Z}[i] $ is regular, 
then $ p \in \mathbb{Z}[i] $ is prime because $ \mathbb{Z}[i] $ is a unique factorization domain and so $ p \mid x + yi $ which means $ p \mid x $ and $ p \mid y $ (see Geometric Aspects to Diophantine Equations of
the Form $ x^2 + zxy + y^2 = M $ and $ z $-Rings, section 4.6, Lemma 4.39). This is a contradiction to the assumption that $ x,y $ are coprime. In case $ p \in \mathbb{Z}[i] $ is irregular, then we find $ \alpha \in \mathbb{Z}[i] $ such that $ p = \alpha \overline{\alpha} $. Now $ \alpha^n \overline{\alpha}^n = p^n \mid \left( x+iy\right)^n $ and so we have $ \alpha \mid x + yi $ and $ \overline{\alpha} \mid x + yi $. If they are not associated, then this means $ p = \alpha \overline{\alpha} \mid x + yi $ and so $ x,y $ would not be coprime as before. If $ \alpha, \overline{\alpha} $ are associated, then we find a unit $ \varepsilon \in \mathbb{Z}[i] $ such that $ \overline{\alpha} = \alpha \varepsilon $. Then we have that $ \alpha^{2n} \mid \left( x+iy\right)^n $, i.e. $ \alpha^2 \mid x + yi $ and so also $ p \mid x + yi $ which leads to the same contradiction. Thus, an $ n \in \mathbb{N} \setminus \left\{ 0 \right\} $ with $ \left( x+yi \right)^n \in \mathbb{N} $ cannot exist.
\end{proof}

\begin{corollary}
	Let $ r \in \mathbb{Q}^{*} $ be arbitrary, then all the elements in $ C \left( \left(0,0 \right),r \right)_{\mathbb{Q}} $ where both coordinates are non-vanishing are acyclic.
\end{corollary}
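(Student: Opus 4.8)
The plan is to translate the rotation–group structure into ordinary complex multiplication, exactly as in \Cref{ex_rotation}, and then to reduce cyclicity to the statement already proved in \Cref{lemma_with_gauss_integers}. Writing $z = a_1 + a_2 i \in \mathbb{C}$ for a point $(a_1,a_2) \in C\left((0,0),r\right)_{\mathbb{Q}}$, the product $\odot_r$ becomes $(u,v)\mapsto uv/r$ under the identification $(u_1,u_2)\leftrightarrow u_1+u_2 i$, so an easy induction gives $(a_1,a_2)^{\odot n} \leftrightarrow z^n/r^{n-1}$ for all $n\in\mathbb{N}$, with the neutral element $(r,0)$ corresponding to $r$. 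Hence $(a_1,a_2)$ is cyclic precisely when $(a_1,a_2)^{\odot n}=(r,0)$ for some $n\ge 1$, i.e. when $z^n = r^n$. Replacing $n$ by $2n$ if necessary, I may assume $n$ is even so that $r^n>0$.

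Next I would clear denominators. Choosing a common denominator $m\ge 1$ with $x:=m a_1,\ y:=m a_2 \in \mathbb{Z}$, and setting $w:=x+yi$, I compute
\[
\N(w)=x^2+y^2=m^2\left(a_1^2+a_2^2\right)=m^2 r^2=(mr)^2.
\]
Since $(mr)^2$ is a non-negative integer while $mr\in\mathbb{Q}$, and a rational number whose square is an integer must itself be an integer, we get $mr\in\mathbb{Z}$; thus $\N(w)$ is a perfect square. Because both coordinates are non-vanishing we have $x,y\neq 0$, so $\N(w)=x^2+y^2\ge 2>1$. To secure coprimality I put $g:=\gcd(x,y)$ and write $x=gx'$, $y=gy'$ with $\gcd(x',y')=1$; from $g^2\mid x^2+y^2=(mr)^2$ it follows (comparing prime valuations) that $g\mid mr$, so $c':=mr/g\in\mathbb{Z}$ and $\N(x'+y'i)=x'^2+y'^2=c'^2$ is again a perfect square, still $>1$ since $x',y'\neq 0$.

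Finally I would feed $w':=x'+y'i$ into \Cref{lemma_with_gauss_integers}, whose hypotheses ($x',y'$ coprime and $\N(w')$ a square $>1$) are now met, to conclude $(w')^n\notin\mathbb{N}$ for every $n\ge 1$. On the other hand, cyclicity gives $z^n=r^n$; since $z=gw'/m$, this rearranges to $(w')^n=(mr/g)^n=(c')^n$, and with $n$ even this equals $|c'|^n$, a genuine positive integer (indeed $|c'|\ge 2$ as $|c'|^2=\N(w')\ge 2$). This contradicts the lemma, so no such $n$ exists and $(a_1,a_2)$ is acyclic.

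I expect the only genuinely delicate points to be bookkeeping rather than conceptual: verifying the inductive identity $(a_1,a_2)^{\odot n}\leftrightarrow z^n/r^{n-1}$ cleanly, and arranging the reduction to a \emph{coprime} pair $(x',y')$ while preserving both the perfect-square norm and the bound $>1$ required by \Cref{lemma_with_gauss_integers}. The sign of $r$ is dealt with once and for all by passing to an even exponent, which guarantees that the right-hand side $(c')^n$ lands in $\mathbb{N}$ and so directly contradicts the lemma.
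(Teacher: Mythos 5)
Your proof is correct and follows essentially the same route as the paper: clear denominators to obtain a coprime Gaussian integer $x' + y'i$ with perfect-square norm greater than $1$, then invoke \Cref{lemma_with_gauss_integers} to contradict $\left( x' + y'i \right)^n = \left( c' \right)^n \in \mathbb{N}$. If anything, your even-exponent trick treats the sign of $r$ more carefully than the paper's proof, which tacitly assumes $\left( kr \right)^n \in \mathbb{N}$.
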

	
\begin{proof}
	Let us assume that we have a point $ \left( \tfrac{a_1}{b_1},\tfrac{a_2}{b_2} \right) \in C\left((0,0),r \right)_{\mathbb{Q}} $ such that $ a_j \neq 0 $ and $ a_j,b_j $ are coprime for $ j = 1,2 $, respectively. Let $ k_1 \in \mathbb{N} $ be the greatest common positive divisor of $ a_1,a_2 $ and $ k_2 \in \mathbb{N} $ be the least common multiple of $ b_1,b_2 $. Define $ k \coloneqq \tfrac{k_2}{k_1} $, then $ \left( x,y \right) \coloneqq \left( k\tfrac{a_1}{b_1},k\tfrac{a_2}{b_2}\right) \in \mathbb{Z}^2 $ and $ x,y $ are coprime. If we assume that $ \left( \tfrac{a_1}{b_1},\tfrac{a_2}{b_2} \right) $ is cyclic, we find $ n \in \mathbb{N} $ such that $ \left( \tfrac{a_1}{b_1},\tfrac{a_2}{b_2} \right)^n = \left( r,0 \right) $, i.e. $  \left( x,y \right)^n = \left( \left( kr \right)^n,0 \right) \in \mathbb{N}^2 $ and so $ \N \left( x + yi \right) = \left( kr \right)^{2n} \in \mathbb{N} $ is a square, but different from $ 1 $ because if $ x + yi \in \mathbb{Z}[i] $ would be a unit. Then either $ x $ or $ y $ would vanish which is not possible. If we apply \Cref{lemma_with_gauss_integers}, we get a contradiction.
\end{proof}	
	
We will see soon another property or characterization of perfect distances over prime field planes. For this we need to define a square root for rotation groups.

	
	
\begin{definition}
	Let $ \left( a_1,a_2 \right) \in C \left( \left( 0,0 \right),r \right)_{\mathbb{F}} $, then we call the squared distance $ D^2 \left( \left( a_1,a_2 \right), \left(r,0 \right)\right) $ {\em induced squared distance by the point $ \left( a_1,a_2 \right) $ with respect to $ C \left( \left( 0,0 \right),r \right)_{\mathbb{F}} $}. We also say that a point $ \left( a_1,a_2 \right) \in C \left( \left( 0,0 \right),r \right)_{\mathbb{F}} $ has a {\em square root $ \left( b_1,b_2 \right) \in C \left( \left( 0,0 \right),r \right)_{\mathbb{F}} $} if $ \left( b_1,b_2 \right)^2 = \left( a_1,a_2 \right) $.
\end{definition}

\begin{proposition} \label{prop_perf_and_square_root}
	Let $ \mathbb{F} $ be a prime field different from $ \mathbb{F}_j $ for $ j = 2,3,5 $, $ r \in \mathbb{F}^{*} $ and $ \left(a_1,a_2 \right) \in C \left( \left( 0,0 \right),r \right)_{\mathbb{F}} $. Then $ \left(a_1,a_2 \right) $ has a square root if and only if the induced squared distance by $ \left(a_1,a_2 \right) $ is perfect.
\end{proposition}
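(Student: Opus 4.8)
The plan is to collapse both sides of the equivalence onto the single algebraic condition $({}^*)$ of \Cref{prop_algebraic_con}, and then use the size of the field to identify $({}^*)$ with perfectness. First I would compute the induced squared distance. Since $\left(a_1,a_2\right)$ lies on $C\left(\left(0,0\right),r\right)_{\mathbb{F}}$ we have $a_1^2 + a_2^2 = r^2$, hence
\[
q \coloneqq D^2\left(\left(a_1,a_2\right),\left(r,0\right)\right) = \left(a_1-r\right)^2 + a_2^2 = 2r^2 - 2a_1 r = 2r\left(r-a_1\right).
\]
By \Cref{def_rot_prod}, a point $\left(b_1,b_2\right) \in C\left(\left(0,0\right),r\right)_{\mathbb{F}}$ is a square root of $\left(a_1,a_2\right)$ exactly when $b_1^2 - b_2^2 = a_1 r$ and $2b_1 b_2 = a_2 r$, together with $b_1^2 + b_2^2 = r^2$. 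Solving the linear system in $b_1^2, b_2^2$ yields
\[
b_2^2 = \frac{q}{4}, \qquad b_1^2 = r^2 - \frac{q}{4} = r^2\left(1 - \frac{q}{4r^2}\right),
\]
while the mixed term imposes $b_1 b_2 = \tfrac{a_2 r}{2}$.

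The heart of the argument is that such a square root exists if and only if $q$ satisfies the algebraic circle property $({}^*)$. Since $\mathbb{F}$ is prime we have $\boldsymbol{P}\left(\mathbb{F}\right) = \mathbb{F}$, and because $4$ and $r^2$ are squares in $\mathbb{F}$, the solvability conditions $b_2^2 \in \square_{\mathbb{F}}$ and $b_1^2 \in \square_{\mathbb{F}}$ are precisely $q \in \square_{\mathbb{F}}$ and $1 - \tfrac{q}{4r^2} \in \square_{\mathbb{F}}$. It remains to check that once both roots exist the sign constraint $b_1 b_2 = \tfrac{a_2 r}{2}$ can always be satisfied; this follows because
\[
\left(b_1 b_2\right)^2 = \frac{q}{4}\left(r^2 - \frac{q}{4}\right) = \frac{r^2\left(r^2 - a_1^2\right)}{4} = \left(\frac{a_2 r}{2}\right)^2,
\]
so $b_1 b_2 = \pm\tfrac{a_2 r}{2}$, and flipping the sign of $b_1$ if needed produces the required value. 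Hence $\left(a_1,a_2\right)$ has a square root if and only if $q$ satisfies $({}^*)$.

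Finally I would identify $({}^*)$ with perfectness. By \Cref{prop_algebraic_con} every perfect distance satisfies $({}^*)$ and is nonzero, which gives one implication. For the converse I would exploit that $\mathbb{F}$ is prime with $\lvert \mathbb{F} \rvert \geq 7$ — which is exactly what excluding $\mathbb{F}_2, \mathbb{F}_3, \mathbb{F}_5$ secures: by \Cref{theorem_prime_field} each circle then carries an e-maximal circular point set of cardinality $\tfrac{\lvert \mathbb{F} \rvert \pm 1}{2} \geq 3$, so \Cref{lem_perfect_means_propagation} applies and any nonzero rational $q$ with $1 - \tfrac{q}{4r^2} \in \square_{\mathbb{F}}$ has the extension property and is therefore perfect. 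Thus for $q \neq 0$ the property $({}^*)$ and perfectness coincide, and combining this with the previous paragraph closes the equivalence. The only boundary case is $q = 0$, which by \Cref{coro_no_vanish} occurs precisely when $\left(a_1,a_2\right) = \left(r,0\right)$, the neutral element; it trivially has the square root $\left(r,0\right)$, so the statement is to be read for $\left(a_1,a_2\right) \neq \left(r,0\right)$.

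The step I expect to be the crux is the implication $({}^*) \Rightarrow$ perfect, since it genuinely fails over small fields: over $\mathbb{F}_5$ the value $q = 4r^2$ satisfies $({}^*)$ and even admits a square root in the rotation group, yet is not perfect (compare \Cref{ex_non_perf}). This is precisely why the three small prime fields must be excluded, and everything else reduces to the routine bookkeeping above.
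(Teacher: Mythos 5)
Your proof is correct, and it bridges the two sides of the equivalence by a genuinely different route than the paper, even though the underlying algebra is shared. Solving $b_1^2 - b_2^2 = a_1 r$, $b_1^2 + b_2^2 = r^2$ for $b_2^2 = \tfrac{q}{4}$ and $b_1^2 = r^2\left(1 - \tfrac{q}{4r^2}\right)$ is essentially the computation the paper also performs: in the forward direction it obtains the induced distance as $4b_2^2 \in \square_{\mathbb{F}}$, and in the backward direction it simply \emph{defines} $b_2 = \tfrac{\sqrt{q}}{2}$ and $b_1 = \tfrac{ra_2}{2b_2}$ and verifies the remaining equation — note that this construction needs only $q \in \square_{\mathbb{F}^{*}}$, since for a distance that actually occurs on the circle the second condition of $({}^*)$ is automatic (that is the content of the proof of \Cref{perf_vers_occurring}), so your sign-flipping argument for $b_1 b_2 = \tfrac{a_2 r}{2}$, while correct, is avoidable. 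The real divergence is the link to perfectness: the paper never routes through $({}^*)$ but instead cites \Cref{perf_vers_occurring} (over a prime field, any occurring rational squared distance different from $4r^2$ is perfect), whereas you collapse everything onto $({}^*)$ and certify $({}^*) \Rightarrow$ perfect via \Cref{theorem_prime_field} and \Cref{lem_perfect_means_propagation}. Your route buys two things the paper's own write-up leaves implicit. First, it makes visible exactly where excluding $\mathbb{F}_2, \mathbb{F}_3, \mathbb{F}_5$ is used, namely in guaranteeing a circular point set of cardinality at least $3$ so that \Cref{lem_perfect_means_propagation} applies; in the paper's proof this hypothesis is never visibly invoked. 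Second, it covers the point $\left(a_1,a_2\right) = \left(-r,0\right)$, whose square root $\left(0,\pm r\right)$ induces the distance $q = 4r^2$ — a case that the paper's appeal to \Cref{perf_vers_occurring} technically does not handle, since that lemma assumes $q \neq 4r^2$; your $\mathbb{F}_5$ counterexample for this value is precisely \Cref{ex_non_perf}. Your observation that the identity $\left(r,0\right)$ (induced distance $0$, never perfect, yet $\left(r,0\right)^2 = \left(r,0\right)$) must be excluded is likewise a genuine boundary case the paper's proof silently skips. One small caveat: the inference ``extension property, therefore perfect'' requires that $q$ occur as a distance from a point of the size-$3$ set, which follows from the transitive rotation action in \Cref{lemma_uniform_prop}; since the paper asserts this implication without proof immediately after the definition, you are on equal footing with it there.
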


\begin{proof}
	Let us assume that there exist $ \left( b_1,b_2 \right) \in C \left( \left( 0,0 \right),r \right)_{\mathbb{F}} $ such that 
	$$ \left( b_1,b_2 \right)^2 = \left( \frac{b_1^2-b_2^2}{r}, \frac{2b_1b_2}{r} \right) = \left( a_1,a_2 \right) .$$  
By \Cref{perf_vers_occurring} it remains to show that the induced distance of $ \left( a_1,a_2 \right) $ is a square in $ \mathbb{F} $. Observe that $ b_1^2 + b_2^2 = r^2 $. Then we have
	\begin{align*}
		D^2 \left( \left( a_1,a_2 \right), \left(r,0 \right)\right) &= \left( \frac{b_1^2-b_2^2}{r} - r \right)^2 + \frac{4b_1^2b_2^2}{r^2} \\
	&= \frac{1}{r^2} \left( \left( b_1^2-b_2^2-r^2\right)^2 + 4b_1^2b_2^2 \right) \\
	&= \frac{1}{r^2} \left( b_1^4+b_2^4+r^4-2b_1^2b_2^2 - 2b_1^2r^2+ 2b_2^2r^2+4b_1^2b_2^2 \right) \\
	&= 	\frac{1}{r^2} \left( \left( b_1^2+ b_2^2 \right)^2+ r^4-2b_1^2r^2+2b_2^2r^2 \right) \\
	&= \frac{1}{r^2} \left( \left( b_1^2+b_2^2 \right)r^2+ \left( b_1^2+b_2^2 \right)r^2- 2b_1^2r^2+ 2b_2^2r^2 \right) \\
	&= 4b_2^2 \in \square_{\mathbb{F}}.
	\end{align*}
On the other hand, assume that the squared distance $ D^2 \left( \left( a_1,a_2 \right), \left(r,0 \right)\right) $ is perfect. We would like to find $ b_1,b_2 $ such that 
$$ \left( b_1,b_2 \right)^2 = \left( \frac{b_1^2-b_2^2}{r}, \frac{2b_1b_2}{r} \right) = \left( a_1,a_2 \right) $$ 
is satisfied. Then the square roots of $ D^2 \left( \left( a_1,a_2 \right), \left(r,0 \right)\right) $ exists and so we can define $ b_2 $ as one of the square roots of
	$$ \frac{D^2 \left( \left( a_1,a_2 \right), \left(r,0 \right)\right)}{4} = \left( a_1-r \right)^2 + a_2^2 = 2r^2 -2a_1r$$ 
(note that $ a_1^2+ a_2^2 = r^2 $). We will denote one of these square roots by $ b_2 = \tfrac{\sqrt{2r^2 -2a_1r}}{2} \in \mathbb{F} $ and the choice of the sign does not matter. Now we can solve for $ b_1 $ by using the equation $ \tfrac{2b_1b_2}{r}=a_2 $, i.e. 
	$$ b_1 = \frac{r}{2b_2}a_2 = \frac{r}{\sqrt{2r^2 -2a_1r}}a_2 \in \mathbb{F}. $$
It remains to show that $ \tfrac{b_1^2-b_2^2}{r} = a_1 $ is satisfied. We have
	\begin{align*}
		\frac{b_1^2-b_2^2}{r} &= 	\frac{r}{2r^2 -2a_1r}a_2^2-\frac{2r-2a_1}{4} \\
		&= \frac{r^2-a_1^2}{2r-2a_1}-\frac{r-a_1}{2} \\
		&= \frac{r+a_1}{2}-\frac{r-a_1}{2} = a_1
	\end{align*}
which shows that $ \left( b_1,b_2 \right)^2 = \left( a_1,a_2\right) $	.
\end{proof}

\begin{example}
	We can construct perfect distances with respect to the circle $ C \left(\left( 0,0 \right),2 \right)_{\mathbb{Q}} $ in the following way: First of all, we need any rational point on $ C \left(\left( 0,0 \right),1 \right)_{\mathbb{Q}} $. To find such a point we can use Pythagorean triples. For example by $ 3^2 + 4^2 = 5^2 $ we get $ 2^2\tfrac{3^2}{5^2}+ 2^2\tfrac{4^2}{5^2} = 2^2 $, i.e. $ \left( \tfrac{8}{5}, \tfrac{6}{5}\right) \in C \left(\left( 0,0 \right),1 \right)_{\mathbb{Q}} $. The squared distance between the points $ \left( \frac{8}{5},\frac{6}{5} \right) $ and $ \left( 2,0 \right) $ is not rational because
	$$ D^2 \left( \left( \frac{8}{5},\frac{6}{5} \right), \left(2,0 \right)\right) = \frac{64+ 16}{25} = \frac{16}{5} \notin \square_{\mathbb{Q}}. $$ 
	However, if we calculate
$$ \left( \frac{8}{5},\frac{6}{5} \right) \odot \left(\frac{8}{5},\frac{6}{5} \right) = \left( \frac{64-36}{2 \cdot 25} , \frac{2 \cdot 48}{2 \cdot 25} \right) = \left( \frac{14}{25}, \frac{48}{25} \right), $$
Then the induced squared distance is
	$$ D^2 \left( \left( \frac{14}{25}, \frac{48}{25} \right), \left(2,0 \right) \right) = \frac{36^2+48^2}{625} = \left( \frac{12}{5} \right)^2 \in \square_{\mathbb{Q}} $$
as \Cref{prop_perf_and_square_root} suggests.	
\end{example}

\begin{example}
	Consider the unit circle in the Euclidean plane. We would like to construct the maximal circular point set on it containing the point $ \left( 1,0 \right) $. A parametrization of the unit circle is given by \Cref{coro_parametrisation}, i.e. each rational point in the unit circle is of the form $ \left( \frac{2t}{t^2 + 1}, \frac{ t^2 - 1 }{t^2 + 1} \right) $ for some $ t \in \mathbb{Q} $. Hence, the points
	$$ \left( \frac{2t}{t^2 + 1}, \frac{ t^2 - 1 }{t^2 + 1} \right)^2 = \left( \frac{-t^4+6t^2-1}{\left( t^2 + 1 \right)^2}, \frac{ 4t \left( t^2-1 \right) }{\left( t^2 + 1 \right)^2} \right) $$
parametrized by $ t \in \mathbb{Q} $ (or even for $ t \in \mathbb{Q}_{+} $ since $ t $ and $ -\tfrac{1}{t} $ would parametrize the same point) have rational distance to the point $ \left( 0,1 \right) $ and define a maximal circular point set by \Cref{prop_perf_and_square_root}. 
\end{example}
	
Note that the point set above is dense in the circle $ C \left( \left( 0,0 \right), r\right) $. That rational and dense point sets on circles in $ \mathbb{R}^2 $ exists is well-known. It is also known that there are no other irreducible algebraic curves than circles and lines that contain dense or just infinite rational point sets \cite{Solymosi_Zeeuw}. However, so far the Erd{\H o}s-Ulam problem is still open, i.e. it is not clear whether there exists an everywhere dense subset in $ \mathbb{R}^2 $ with respect to the Euclidean topology and there is still resent research on it, e.g. see  \cite{Erd_Ulam_prob_2019}.	
\end{subsection}

\begin{subsection}{A possible application in cryptography}
	The goal of this section is to describe and apply some from the previous ideas in cryptography. More concretely, we would like to use a key exchange related to the Diffie-Hellman protocol applied on circles centered at the origin instead of elliptic curves what is usually used. Recall that we defined a group on points on a circle. Now we would like to describe how two people, called A and B, can communicate with each other and agree on a point on this circle such that anyone who is in the middle gets the messages of A and B, but cannot decode it because he or she cannot solve the logarithm problem in a reasonable time.
	
For the procedure we will assume that $ \mathbb{F} $ is a prime field. We will consider both the cases when $ \mathbb{F} $ has finite and infinite characteristics. 
At first A and B need to agree on a public key, i.e. on a point $ Q \in C \left(  \left(0,0 \right),r \right)_{\mathbb{F}} $ where $ r \in \mathbb{F} $. 
In case $ \mathbb{F} $ is infinite, then we know that all points in $ C \left(  \left(0,0 \right),r \right)_{\mathbb{F}} $ except $ \left( r,0 \right), \left( 0,r \right), \left(-r,0 \right), \left(0,-r \right)$ are acyclic. Now A can choose $ n \in \mathbb{N} $ and B chooses $ m \in \mathbb{N} $ which both are kept private. Then A calculates $ Q^n $, B calculates $ Q^m $ and send each other the calculated point. Since
	$$ \left( Q^n \right)^m =  Q^{nm} = \left( Q^m \right)^n ,$$
both A and B can calculate the key $ Q^{nm} $, but someone in the middle only knows $ Q, Q^n, Q^m $ and does not know $ Q^{nm} $ because she or he is incapable of calculating $ n,m $ which means solving the logarithm problem. Since the rotation product is equivalent to the multiplication of rotation matrices, it seems that the logarithm problem is hard to solve. The case $ \mathbb{F}= \mathbb{Q} $ might be interesting because all the elements of the group where both coordinates are not vanishing are not cyclic and the calculation of its powers are entirely in $ \mathbb{Q}^2 $, so $ n,m $ may be chosen as high as wanted to make it more secure. However, to calculate with fractions might be costly when implemented in a computer program. 

Whereas the calculation of $ Q^n $ can be done efficiently as 
	$$ n = \sum_{j=1}^{N}2^ja_j $$
can be decomposed binary where $ a_j \in \left\{ 0,1 \right\} $, $ N \in \mathbb{N} $ with $ a_N = 1 $. Then we can calculate
	$$ \prod_ {\substack{j=1, \\ a_j \neq 0}}^{N}Q^{2^ja_j} $$
to minimize the number of operations. Note that this so called binary exponentation can be done without calculating the binary decomposition of $ n $ explicitly.

\end{subsection} 
\end{section}

\bibliography{Maximal_rat_circ_point_sets_Chris_Busenhart}
\bibliographystyle{plain}

\end{document}